\newcommand{\stkout}[1]{\ifmmode\text{\sout{\ensuremath{#1}}}\else\sout{#1}\fi}
 \newcommand{\grad}{\triangledown}
\newcommand{\sA}{\mathscr{A}}
\newcommand{\SB}{\mathcal{T}}
\theoremstyle{plain}
\newtheorem{thm}{Theorem}[section]
\theoremstyle{plain}
\newtheorem{lem}[thm]{Lemma}
\newtheorem{example}{Example}[section]
\newtheorem{assumption}{Assumption}[section]
\theoremstyle{definition}
\newtheorem{defi}{Definition}[section]
\newtheorem{rem}{Remark}[section]
\newtheorem*{maintheorem*}{Main Theorem}
\newtheorem*{maincorollary*}{Main Corollary}
\newcommand{\norm}[1]{\ensuremath{\left\|#1\right\|}}
\newcommand{\abs}[1]{\ensuremath{\left|#1\right|}}
\newcommand{\Kset}{\ensuremath{\mathcal{K}}}
\newcommand{\sK}{\mathscr{K}}
\newcommand{\cL}{\ensuremath{\mathcal{L}}}
\newcommand{\cI}{\ensuremath{\mathcal{I}}}
\newcommand{\sV}{\ensuremath{\mathscr{V}}}
\newcommand{\R}{\ensuremath{\mathbb{R}}}
\newcommand{\sorder}{\mathfrak{o}}
\newcommand{\sL}{\mathscr{L}}
\newcommand{\Usm}{\mathscr{U}}
\newcommand{\sJ}{\mathscr{J}}
\newcommand{\rd}{\ensuremath{\R^d}}
\newcommand{\dy}{\ensuremath{\, dy}}
\newcommand{\dz}{\ensuremath{\, dz}}
\numberwithin{equation}{section} \allowdisplaybreaks
\DeclareMathOperator*{\Argmin}{arg\,min}
\begin{document}
\title[Existence-Uniqueness results]{Existence-Uniqueness for nonlinear integro-differential
equations with drift in $\rd$}

\author{Anup Biswas 
\& Saibal Khan}

\address{Indian Institute of Science Education and Research-Pune, Dr.\ Homi Bhabha Road, Pashan, Pune 411008. Email:
{\tt anup@iiserpune.ac.in, saibal.khan@acads.iiserpune.ac.in}}

\dedicatory{Dedicated to the memory of Ari Arapostathis (1954-2021)}

\begin{abstract}
In this article we consider a class of
 nonlinear integro-differential 
equations of the form
$$\inf_{\tau \in\mathcal{T}} \bigg\{\int_{\mathbb{R}^d}
(u(x+y)+u(x-y)-2u(x))\frac{k_{\tau}(x,y)}{|y|^{d+2s}} \,dy+ b_{\tau}(x) \cdot \nabla u(x)+g_{\tau}(x) \bigg\}-\lambda^*=0\quad \text{in} \hspace{2mm} \mathbb{R}^d,$$
where $0<\lambda(2-2s)\leq k_{\tau}\leq \Lambda (2-2s)$ , $s\in (\frac{1}{2},1)$. The above equation appears in the study of ergodic control problems in $\rd$ when the controlled dynamics is governed by pure-jump L\'evy processes characterized by the kernels $k_{\tau}\,|y|^{-d-2s}$ and the drift $b_\tau$.
Under a Foster-Lyapunov condition, we establish the existence of a unique solution pair $(u, \lambda^*)$ satisfying the above equation, provided we set $u(0)=0$.
Results are then extended to cover the HJB equations 
of mixed local-nonlocal type and this significantly improves the results
in \cite{ACGZ}.
\end{abstract}

\keywords{Ergodic control problem, Liouville theorem, regularity, mixed local-nonlocal operators, controlled jump diffusions}
\subjclass[2010]{Primary: 35Q93, 35F21,  Secondary: 93E20, 35B53}

\maketitle

\section{Introduction}
Our chief goal in this article is to find a pair 
  $(u,\lambda^*)$ that satisfies  
\begin{align}
\label{ergodic_HJB}
\inf_{\tau \in\mathcal{T}} \bigg\{\int_{\mathbb{R}^d}\delta (u,x,y)\frac{k_{\tau}(x,y)}{|y|^{d+2s}} \,dy + b_{\tau}(x) \cdot \nabla u(x)+g_{\tau}(x) \bigg\}-\lambda^*=0\quad \text{in} \hspace{2mm} \mathbb{R}^d,
\end{align}
  where $\delta(u,x,y):=u(x+y)+u(x-y)-2 u(x)$ and $\mathcal{T}$ is an indexing set.  We impose the following assumptions on the kernel: Let $x\mapsto  k_{\tau}(x,y)$ be continuous uniformly in $(y,\tau)$ and satisfies 
 \begin{align*}
 k_\tau(x,y)=k_\tau(x,-y),\quad
(2-2s)\lambda \leq k_{\tau}(x,y)\leq  (2-2s)\Lambda \quad \forall\, x, y \in \mathbb{R}^d,
 \end{align*}
 where  $s\in (\frac{1}{2},1)$ and $0< \lambda \leq \Lambda$.
Let us introduce  the following notations.
\begin{align}\label{lin-opt}
I_{\tau}[u](x)=\int_{\mathbb{R}^d}\delta (u,x,y)\frac{k_{\tau}(x,y)}{|y|^{d+2s}} \,dy;\quad \mathcal{L}_{\tau}[u](x)=I_{\tau}[u](x)+ b_{\tau}(x) \cdot \nabla u(x).
\end{align}
Also, denote by $\omega_s(y)=\frac{1}{1+|y|^{d+2s}}$. 
\begin{rem}
The symmetry property of the kernel $k_\tau$ is not used in this article, but we still make this assumption due to the following reason. In general, the nonlocal operator of 
L\'evy type does not have a symmetrized form (cf. \cite[Chapter~3]{Apple}) and this symmetrization (that is, the form involving $\delta(u,x,y)$) is possible when kernel $k_\tau$ is symmetric in the second variable. Therefore, keeping the
assumption of symmetry makes the model physically relevant.
\end{rem}

One of the main motivations to study \eqref{ergodic_HJB} comes from the stochastic ergodic control problems where the
random noise in the controlled dynamics corresponds to some $2s$-stable process. More precisely, suppose that 
the action set (or control set) $\mathcal{T}$ is a metric space and 
$\Usm$ denotes the collection of all stationary Markov controls, that is, collection of all Borel measurable functions $v:\mathbb{R}^d\to \mathcal{T}$. This class of functions plays a central role in the study of optimal control problems. Let us also assume that the martingale problem corresponding to the operator $\mathcal{L}^v$ , defined by (see \eqref{lin-opt})
$$\mathcal{L}^v [f](x)=\mathcal{L}_{v(x)}[f](x),$$
is well-posed. In particular, for every $v\in\Usm$ there exists a family of probability measures $\{\mathbb{P}_x\}_{x\in\mathbb{R}^d}$ on $\mathbb{D}([0, \infty), \mathbb{R}^d)$, the space of C\`{a}dl\`{a}g functions on $[0, \infty)$ taking values in $\mathbb{R}^d$, such that $(\mathbb{P}^v_x, X^v)$, where $X^v$ denotes the canonical coordinate process,
solves the martingale problem. One albeit needs to impose certain regularity hypothesis on the kernels $k_\tau$ to guarantee well-posedness of the martingale problem, see for instance \cite{CKS12,Kom84}. Let $\rd\times\mathcal{T}\ni (x, \tau)\to
g_\tau(x)$ denotes the running cost
 and the goal is to minimize the 
ergodic cost criterion
$$\sJ[v]:=\limsup_{T\to\infty}\, \frac{1}{T} \mathbb{E}^v_x\left[\int_0^T g_{v}(X^v_t)\right],$$
over $\Usm$. We denote the optimal value by $\lambda^*$.
It is then expected that the optimal value $\lambda^*$ would satisfy \eqref{ergodic_HJB} (cf. \cite{ACGZ,red-book,Bor89,GM92,FS06})
and the measurable selectors of \eqref{ergodic_HJB} would be the optimal controls in $\Usm$. Though the analogous problem 
for the local case (that is, $s=1$) has been investigated extensively (see \cite{red-book} and references therein), study of equation \eqref{ergodic_HJB} remained open. 
Recently, ergodic control problem in $\rd$ with dispersal type nonlocal kernel is considered by Br\"{a}ndle \& Chasseigne in \cite{BC19}
whereas Barles et.\ al. \cite{BCCI14} study the ergodic control problem for mixed integro-differential operators in periodic settings.
In this article, we establish the existence and uniqueness of solution to \eqref{ergodic_HJB} under a Foster-Lyapunov type 
condition.

We say
a function $f:\rd\to \R$ is inf-compact (or coercive) if for any
$\kappa\in\R$ either $\{f\leq \kappa\}$ is empty or compact.
\begin{assumption}
We make following assumptions on the coefficients.
\begin{itemize}\label{assumptions}
\item [(\hypertarget{A1}{A1})] There exists $V\in C^2(\mathbb{R}^d), V\geq 0$ and a function $0\leq h\in C(\mathbb{R}^d)$, $V$ and $h$ are inf-compact, such that
\begin{align}\label{Lyap}
\sup_{\tau \in\mathcal{T}}\mathcal{L}_{\tau} [V](x)\leq k_0-h(x)
\quad x\in\rd,
\end{align}
for some $k_0>0$. This is called Foster-Lyapunov stability condition. The above condition also implies that $V\in L^1(\omega_s)$.
\item[(\hypertarget{A2}{A2})] $\sup_{\tau\in\mathcal{T}}|g_{\tau}(x)|\leq h(x)$  and 
$\sup_{\tau \in\mathcal{T}}|g_{\tau}|\in\sorder(h)$, that is,
$$\lim_{|x|\to\infty} \frac{1}{1+h(x)}\sup_{\tau \in\mathcal{T}}|g_\tau(x)|=0.$$
\item[(\hypertarget{A3}{A3})] For some $\upmu\geq 0$ we have $V^{1+\upmu}\in L^1(\omega_s)$ and
\begin{align}
\sup_{\tau \in\mathcal{T}}\left[\frac{|b_\tau|}{(1+V)^{(2s-1)\upmu}}
+ \frac{|g_{\tau}|}{(1+V)^{1+2s\upmu}}\right]&\leq C,\label{EA.3A}
\\ 
\sup_{x, y}\frac{V(x+y)}{(1+V(x))(1+V(y))} + \limsup_{|x|\to\infty}\; \frac{1}{1+V(x)}\sup_{|y-x|\leq 1} V(y)& \leq C.\label{EA.3B}
\end{align}
\item[(\hypertarget{A4}{A4})] The map $x\rightarrow k_{\tau}(x,y)$ is uniformly continuous, uniformly in $\tau$ and $y$, that is,
  \begin{align*}
  |k_{\tau}(x_1,y)-k_{\tau}(x_2,y)|\leq \varrho(|x_1-x_2|)\quad \forall x_1,x_2 \in \mathbb{R}^d,\; \tau\in \mathcal{T}, y\in\rd,
  \end{align*}
for some modulus of continuity $\varrho$.
\end{itemize}
\end{assumption}
Note that \eqref{EA.3A} allows $b_\tau, g_{\tau}$ to be unbounded. Since
$V$ is inf-compact, \eqref{EA.3A} holds for bounded $b_\tau, g_{\tau}$. This condition will be useful to find a growth-bound
on the H\"{o}lder norm of solution $u$ in the unit ball
$B_1(x)$.
Condition \eqref{EA.3B} requires $V$ to have polynomial growth. For $C^{2s+\upkappa}$ regularity of solutions we shall often impose the following condition on the coefficients.
\begin{itemize}
\item[(\hypertarget{A5}{A5})] For every compact set $K\subset\rd$, there exists $\tilde\alpha\in (0, 1)$ so that
$$\sup_{\tau\in \mathcal{T}}\abs{b_\tau(x)-b_\tau(x')} 
+ \sup_{\tau\in \mathcal{T}}\abs{g_{\tau}(x)-g_{\tau}(x')}
+ \sup_{\tau \in\mathcal{T}}\,\sup_{y\in\rd}
|k_{\tau}(x, y)-k_{\tau}(x', y)|\leq C_K|x-x'|^{\tilde\alpha}.$$
\end{itemize}

It can be easily seen that Assumption~\ref{assumptions} holds for $2s$-fractional Ornstein-Uhlenbeck type operators. Such operators are used by Fujita, Ishii \& Loreti \cite{FIL06} to study large time behaviour of solutions to certain (local) Hamilton-Jacobi equation. Recently, Chasseigne, Ley \& Nguyen \cite{CLN19} consider Ornstein-Uhlenbeck type drift with tempered $2s$-stable nonlocal kernel to study Lipschitz regularity of the
solutions.
In Example~\ref{Eg1.1}
below we provide a large class of functions satisfying
(\hyperlink{A1}{A1})--(\hyperlink{A3}{A3}). 

One of the main results of this article is as follows
\begin{thm}\label{T1.1}
Suppose that (\hyperlink{A1}{A1})--(\hyperlink{A5}{A5}) hold. Then there exists a
unique pair $(u, \lambda^*)\in
C(\rd)\cap \sorder(V)\times\mathbb{R}$ satisfying
\begin{equation}\label{ET1.1A}
\inf_{\tau \in\mathcal{T}}(\cL_{\tau} u + g_{\tau}) -\lambda^*=0\quad \text{in}\; \rd, \quad u(0)=0.
\end{equation}
\end{thm}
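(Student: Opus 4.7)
The plan is to use the vanishing discount approach adapted to the nonlocal setting with drift. For each $\alpha>0$ I would first solve the discounted HJB
\begin{equation*}
\alpha u_\alpha + \inf_{\tau\in\mathcal{T}}\bigl(\mathcal{L}_\tau u_\alpha + g_\tau\bigr)=0\quad\text{in }\mathbb{R}^d,
\end{equation*}
via a Perron/approximation on balls $B_R$ with zero exterior data, using $\pm C_\alpha(1+V)$ as global sub/supersolutions (which works because \eqref{Lyap} and (\hyperlink{A2}{A2}) force $\alpha V$ to dominate $h$ outside a large ball). Continuity and stability of viscosity solutions for nonlocal operators with bounded kernels then produce a locally H\"older solution $u_\alpha\in C(\mathbb{R}^d)$.

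Next I would establish the two estimates that make the limit $\alpha\downarrow 0$ work. Applying $\mathcal{L}_\tau$ to $V$ and invoking \eqref{Lyap} together with (\hyperlink{A2}{A2}) yields $\|\alpha u_\alpha\|_\infty \le C$ uniformly in $\alpha$. The sharper oscillation bound $|u_\alpha(x)-u_\alpha(0)|\le C(1+V(x))$, uniform in $\alpha$, is obtained by comparing $u_\alpha\mp (c_1 V+c_2)$ using the Foster--Lyapunov inequality outside a compact $K$ and using the Caffarelli--Silvestre/Serra type interior H\"older estimate for $\mathcal{L}_\tau$ inside $K$ (valid since $s>1/2$ and (\hyperlink{A4}{A4}) holds). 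Setting $v_\alpha:=u_\alpha-u_\alpha(0)$, the interior H\"older regularity combined with the $V$-growth bound and (\hyperlink{A3}{A3}) makes $\{v_\alpha\}$ locally equicontinuous, and the tail-integrability $V\in L^1(\omega_s)$ plus the asymptotic decay in (\hyperlink{A2}{A2})--(\hyperlink{A3}{A3}) lets the nonlocal integral pass to the limit. A diagonal extraction then gives $\alpha_n u_{\alpha_n}(0)\to -\lambda^*$ and $v_{\alpha_n}\to u$ locally uniformly, with $(u,\lambda^*)$ a viscosity solution of \eqref{ET1.1A} and $u(0)=0$. Hypothesis (\hyperlink{A5}{A5}) upgrades $u$ to $C^{2s+\upkappa}_{\loc}$, so the equation holds classically, and the bound $|u|\le C(1+V)$ together with (\hyperlink{A2}{A2}) and an $\varepsilon$-perturbation argument against $V$ promote this to $u\in\sorder(V)$.

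The main obstacle is uniqueness, which I would attack stochastically. Given two solution pairs $(u_i,\lambda_i)$ with $u_i(0)=0$ and $u_i\in\sorder(V)$, select a measurable minimizer $v_i\in\Usm$ in the equation for $u_i$. Since \eqref{Lyap} guarantees that the controlled process $X^{v_i}$ is positive recurrent with a unique invariant probability $\mu_{v_i}$, an It\^o--Dynkin computation applied to $u_i$ along $X^{v_i}$ up to $\tau_R\wedge T$, localized via the moment bound $\mathbb{E}^{v}_x V(X^{v}_{t\wedge\tau_R})\le V(x)+k_0 t$, gives
\begin{equation*}
u_i(x) + \mathbb{E}^{v_i}_x\!\!\int_0^{T}\!\bigl(g_{v_i}(X^{v_i}_t)-\lambda_i\bigr)dt \;=\; \mathbb{E}^{v_i}_x\bigl[u_i(X^{v_i}_T)\bigr].
\end{equation*}
Dividing by $T$, sending $T\to\infty$, and using $u_i\in\sorder(V)$ with uniform integrability from (\hyperlink{A1}{A1}) identifies $\lambda_i=\int g_{v_i}\,d\mu_{v_i}$; running the same computation with the minimizer of the \emph{other} equation swaps the inequality, forcing $\lambda_1=\lambda_2=:\lambda^*$ (the ergodic value). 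Finally, $w:=u_1-u_2$ is then a viscosity solution of a linear nonlocal equation with drift, zero right-hand side, $w(0)=0$, and $w\in\sorder(V)$; a Liouville-type argument (strong maximum principle for the linearized operator along with the sublinear-in-$V$ growth and Foster--Lyapunov recurrence) forces $w\equiv 0$. The most delicate point is justifying the It\^o expansion for $V$-growth solutions, which is handled by approximating $u_i$ by its truncations and passing to the limit using (\hyperlink{A3}{A3}) and Fatou.
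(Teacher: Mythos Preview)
Your existence argument is essentially the paper's: solve the $\alpha$-discounted problem on balls with the Lyapunov barrier, pass to the limit, and upgrade regularity via (\hyperlink{A5}{A5}). One step you gloss over is the \emph{uniform-in-$\alpha$} oscillation bound $\sup_\alpha\max_K|\bar w_\alpha|<\infty$. A direct comparison with $c_1V+c_2$ only gives $|\bar w_\alpha|\le \max_B|\bar w_\alpha|+V$ for a fixed ball $B$; the interior H\"older estimate inside $B$ does not close this because its constant depends on the global size of $u_\alpha$, which blows up like $\alpha^{-1}$. The paper handles this by a contradiction/blow-up argument (Lemma~\ref{L3.1}): if $\rho_n:=\max_B|\bar w_{\alpha_n}|\to\infty$, normalize $\psi_n=\bar w_{\alpha_n}/\rho_n$, pass to a limit $\psi$ solving $\inf_\tau\mathcal{L}_\tau\psi=0$ with $|\psi|\le 1$ and $\psi(0)=0$, and derive a contradiction from the strong maximum principle. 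You should make this step explicit.

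The real gap is in your uniqueness argument. You invoke the controlled process $X^{v_i}$, It\^o--Dynkin, positive recurrence and invariant measures, but none of this is available under (\hyperlink{A1}{A1})--(\hyperlink{A5}{A5}). As the introduction itself notes, well-posedness of the martingale problem for $\mathcal{L}^v$ with a measurable stationary control $v$ requires additional regularity of the kernels (cf.\ \cite{CKS12,Kom84}) that is \emph{not} assumed here; moreover there is no compactness hypothesis on $\mathcal{T}$ guaranteeing measurable selectors, and the It\^o expansion for $C^{2s+}$ (not $C^2$) functions of a $2s$-stable-like process with $x$-dependent kernel is itself nontrivial. The paper deliberately avoids all of this and gives a purely PDE proof: since both solutions are $C^{2s+}_{\rm loc}$ they are classical; if $\hat\rho<\lambda^*$ one perturbs $\hat u_\varepsilon=\hat u+\varepsilon V$, uses \eqref{Lyap} to get $\inf_\tau(\mathcal{L}_\tau\hat u_\varepsilon+g_\tau)<\lambda^*$ for small $\varepsilon$, and evaluates the equation for $u$ at the (finite, since $u,\hat u\in\sorder(V)$) minimizer of $\varepsilon V+\hat u-u$ to reach a contradiction. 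Once $\hat\rho=\lambda^*$, the difference $v=\hat u-u$ satisfies $\sup_\tau\mathcal{L}_\tau v\ge 0$; a touching argument with $t_0V$ on $\{h>k_0\}$ forces $\sup_{\mathbb{R}^d}v$ to be attained on a compact set, and then the strict positivity of the kernel yields $v\equiv\text{const}=0$. Your Liouville step for $w=u_1-u_2$ is in this spirit, but your identification of $\lambda_1=\lambda_2$ needs to be replaced by the analytic argument above.
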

By a solution we  shall always mean viscosity solution in the sense of Caffarelli-Silvestre \cite{CS09} (see also \cite{BI08,BCI08}), unless mentioned otherwise.
\begin{defi}[Viscosity solution]
A function $u:\rd\to \R$, upper (lower) semi continuous in a domain $\bar{D}$ and $u\in L^1(\omega_s)$, is said to be a viscosity subsolution (supersolution) to
$$\inf_{\tau \in\mathcal{T}}(\cL_{\tau} u + g_{\tau})=0\quad \text{in}\; D,$$
written as $\inf_{\tau \in\mathcal{T}}(\cL_{\tau} u + g_{\tau})\geq 0$ ($\inf_{\tau \in\mathcal{T}}(\cL_{\tau} u + g_{\tau})\leq 0$), if for any point $x\in D$ and a neighbourhood
$N_x$ of $x$ in $D$, there exists a function $\varphi\in C^2(\bar{N}_x)$ so that $\varphi-u$ attains minimum value $0$ in $N_x$ at the point $x$, then letting
\[
v(y):=\left\{
\begin{array}{ll}
\varphi(y) & \text{for}\; y\in N_x,
\\[2mm]
u(y) & \text{otherwise},
\end{array}
\right. 
\]
we have $\inf_{\tau \in\mathcal{T}}(\cL_{\tau} v + g_{\tau})\geq 0$ ($\inf_{\tau \in\mathcal{T}}(\cL_{\tau} v + g_{\tau})\leq 0$, resp.). We say $u$ is a viscosity solution if
it is both sub and supersolution.
\end{defi}
 
In practice, there are two types of situations that are normally considered to study ergodic control problems. These
are (i) near-monotone condition, and (ii) stability condition
(cf. \cite{ACGZ,red-book}). Under near-monotone setting, it is assumed that the limiting value of 
$\inf_{\tau\in\mathcal{T}}g_\tau$ at infinity is strictly bigger than $\lambda^*$ (cf. (2.3) in \cite{ACGZ}).
 The classical linear-quadratic problems fall under this setting. Due to this near-monotone condition we expect the optimal Markov control, if exists, to be {\it stable}.
In the second situation ( that is, under stability condition), a blanket stability condition (same as \eqref{Lyap}) is imposed on the drifts but no growth condition at infinity is imposed on the running cost $g_\tau$. Though
\eqref{ET1.1A} has been extensively studied in the local case, the nonlocal version remained unsettled, mainly due to the unavailability of certain regularity estimates and appropriate Harnack type inequality for general nonlinear integro-differential operators. 
To understand the difficulty in proving
Theorem~\ref{T1.1} we recall the three main steps of the proof: 
\begin{itemize}
\item[(a)] Find solution $w_\alpha$ to the $\alpha$-discounted problem
for $\alpha\in (0,1)$; 
\item[(b)] Establish the convergence of $\bar{w}_\alpha(x):= w_\alpha(x)-w_\alpha(0)$ to solution $u$ of \eqref{ET1.1A};
\item[(c)] Show that $(u, \lambda^*)$ is unique.
\end{itemize}
For (a), we consider a more general class of discounted problem given by
$$\cI w(x)=\inf_{\tau \in\mathcal{T}}\bigl(\cL_{\tau} w + c_{\tau}(x) w(x)+ g_{\tau}(x)\bigr)=0\quad \text{in}\; \rd.
$$
Fixing $c_\tau=-\alpha$ we obtain $\alpha$-discounted problem. It is possible to weaken Assumption~\ref{assumptions} substantially to solve the above
equation  (see Assumption~\ref{A2.1} in Section~\ref{S-alpha}).
In particular, we prove the following result.
\begin{thm}\label{T1.2}
Suppose that (\hyperlink{B1}{B1})--(\hyperlink{B4}{B4}) hold.
Then there exists $w\in C(\rd)\cap\sorder(\sV)$,
$\sV$ given by \eqref{Lyap-alpha}, satisfying
\begin{equation}\label{ET1.2A}
\inf_{\tau \in\mathcal{T}} \Bigl(\cL_{\tau} w + c_{\tau} w + g_{\tau}\Bigr) =0\quad \text{in}\; \rd.
\end{equation}
In addition, if (\hyperlink{B5}{B5}) holds, then 
$w\in C^{2s+}_{\rm loc}(\rd)$ and it is the unique solution
in the class $\sorder(\sV)$.
\end{thm}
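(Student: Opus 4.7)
My plan is to construct $w$ as a limit of solutions $w_R$ to Dirichlet problems on expanding balls $B_R$, and then to bootstrap regularity and deduce uniqueness under the additional hypothesis (B5). Concretely, for each $R>0$ I would first solve
\begin{equation*}
\inf_{\tau\in\mathcal{T}}\bigl(\cL_\tau w_R + c_\tau w_R + g_\tau\bigr) = 0 \ \text{in} \ B_R, \qquad w_R = 0 \ \text{on} \ B_R^c,
\end{equation*}
using Perron's method for nonlinear integro-differential operators (as in \cite{CS09,BI08,BCI08}). The required comparison principle on $B_R$ comes from the sign condition on $c_\tau$ built into (B1)--(B4), and barriers at $\partial B_R$ are standard since $s>\tfrac12$ makes the integro-differential part of order strictly larger than the first-order drift. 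The decisive ingredient is then an a priori estimate $|w_R|\le C(1+\sV)$ with $C$ independent of $R$: this is obtained by checking that $\pm C(1+\sV)$ are super/subsolutions of the discounted equation, for which the Foster-Lyapunov condition on $\sV$ in (B1)--(B4) together with the $\sV$-growth control of $g_\tau$ is exactly what is needed. The integrability $\sV\in L^1(\omega_s)$ and a polynomial-growth estimate analogous to \eqref{EA.3B} ensure the nonlocal operator does not spoil this inequality, while a truncation argument handles the fact that the barrier is unbounded.

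With the uniform Lyapunov-weighted bound in hand, interior $C^\alpha$ estimates of Caffarelli-Silvestre type \cite{CS09} (valid in the presence of bounded drift because $s>\tfrac12$ places the drift at strictly lower order than the nonlocal diffusion), combined with tail control through the $\omega_s$-integrability of $\sV$, give equicontinuity of $\{w_R\}$ on each compact subset of $\rd$. An Arzelà-Ascoli argument along a diagonal exhaustion extracts a limit $w\in C(\rd)$, and stability of viscosity solutions of nonlocal equations under locally uniform convergence with controlled tails shows $w$ solves \eqref{ET1.2A}. The $\sorder(\sV)$-growth of $w$ is inherited from the uniform bound on $w_R$ together with the $\sV$-smallness of $g_\tau$ at infinity built into (B2)-type hypotheses. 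Under the additional Hölder regularity (B5) of the coefficients, interior Schauder-type estimates for fully nonlinear nonlocal operators (again available from \cite{CS09}) bootstrap the $C^\alpha$ bound to $w\in C^{2s+\tilde\alpha}_{\mathrm{loc}}(\rd)$.

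For uniqueness in the class $\sorder(\sV)$, if $w_1,w_2$ are two such solutions, then $w_1-w_2$ is a viscosity solution (after the usual manipulation of infima of affine functionals) of a linear nonlocal equation whose principal part is controlled by the extremal Pucci operators associated to the kernel class and whose zeroth-order coefficient is inherited from $c_\tau\le 0$. Comparing $w_1-w_2$ on $B_R$ against the family $\pm\varepsilon(1+\sV)$, the exterior error on $B_R^c$ is $o(1)\cdot(1+\sV)$ by the $\sorder(\sV)$ hypothesis on $w_1,w_2$; letting $R\to\infty$ and then $\varepsilon\to 0$ forces $w_1\equiv w_2$. The main obstacle I expect is the Foster-Lyapunov step in the a priori bound: one must control the nonlocal contribution
\begin{equation*}
\int_{\rd}\delta(\sV,x,y)\,\frac{k_\tau(x,y)}{|y|^{d+2s}}\dy
\end{equation*}
uniformly in $\tau$ in the presence of the unbounded drift term $b_\tau\cdot\nabla\sV$, and this is where the polynomial-growth hypothesis on $\sV$ and the scaling condition on $b_\tau$ in (B1)--(B4) must be balanced delicately; everything else is essentially bookkeeping on top of the Caffarelli-Silvestre regularity theory.
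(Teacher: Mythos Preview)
Your overall strategy---solve on balls, use $\sV$ as a barrier, extract a limit via interior H\"older estimates and Arzel\`a--Ascoli, then bootstrap regularity and prove uniqueness---matches the paper's. But your assessment of where the difficulty lies is inverted, and the step you dismiss as routine contains a real gap.

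The Foster--Lyapunov barrier you flag as ``the main obstacle'' is not an obstacle at all: hypothesis (B2) \emph{directly assumes} $\sup_\tau(\cL_\tau\sV+c_\tau\sV)\le k_0 1_{\sK}-h$, so setting $\tilde\sV:=k_0/c_\circ+\sV$ one obtains $\sup_\tau(\cL_\tau\tilde\sV+c_\tau\tilde\sV+|g_\tau|)\le 0$ in one line from $|g_\tau|\le h$ and $c_\tau\le-c_\circ$. No delicate balancing is required; you never have to estimate $\int\delta(\sV,x,y)k_\tau|y|^{-d-2s}\,dy$ yourself.

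The genuine gap is the bootstrap to $C^{2s+}_{\rm loc}$, which is \emph{not} in \cite{CS09}. That paper treats translation-invariant operators without drift and stops at $C^{1,\alpha}$; it provides no Schauder theory for $x$-dependent kernels with lower-order terms. The paper's route (Lemma~\ref{L2.6}) is: (i) first establish a $C^{1,\gamma}$ estimate for equations with drift and merely continuous $x$-dependent kernels---this is a separate theorem (Section~\ref{S-regu}), proved by blow-up and a Liouville theorem \`a la Serra, and is one of the paper's main technical contributions; (ii) once $\nabla w$ is locally H\"older, absorb $b_\tau\!\cdot\!\nabla w+c_\tau w+g_\tau$ into a H\"older right-hand side and invoke Serra's $C^{2s+\alpha}$ estimate \cite{Serra2015} for concave operators; (iii) to make the localized nonlocal tail in step~(ii) H\"older, one needs a \emph{global} growth bound $|\nabla w(x)|\le C(1+\sV(x))^{1+\upmu}$, which is obtained by a rescaling argument that uses (B3) in an essential way. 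Without step~(i), step~(ii) cannot start; without the structure of (B3), step~(iii) fails. Since your uniqueness argument requires every solution in $\sorder(\sV)$ to be classical---otherwise ``the usual manipulation of infima of affine functionals'' does not produce a legitimate viscosity inequality for $w_1-w_2$---this regularity gap also undermines the uniqueness proof.
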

If compared with the existing literature, the existence, uniqueness and the regularity results in Theorem~\ref{T1.2} appear to be new. For instance, most of the existence results in
the unbounded domains consider periodic setting, see Barles et.\ al. \cite{BCCI14}, Ciomaga, Ghilli \& Topp \cite{CGT22}.  Under the 
assumption that the coefficients are Lipschitz with sublinear growth, Jakobsen \& Karlsen \cite{JK06} establish comparison principle for bounded viscosity solutions to mixed L\'evy-It\^{o} type Isaacs equations.
Biswas, Jakobsen \& Karlsen \cite{BJK10} study the existence-uniqueness of viscosity
solutions for parabolic integro-differential equations of L\'evy-It\^{o} type under the assumption that the L\'evy kernel is having an exponentially decaying tail and the coefficients of \eqref{ET1.2A} are Lipschitz.
The existence-uniqueness result in Theorem~\ref{T1.2} is obtained for a purely nonlocal equation that is not of L\'evy-It\^{o} type. Similar existence-uniqueness result for mixed integro-differential operator is 
established in Section~\ref{S-mixed}. In a recent work, Meglioli \& Punzo \cite{MP22} consider linear equation involving $2s$-fractional Laplacian and $C^1$ drift and study uniqueness of the solutions. The methodology
in \cite{MP22} is of variational nature and can not be adapted for operators of the form \eqref{ET1.2A} (see Remark~\ref{R2.1} for a detailed comparison). The regularity result plays a crucial role in obtaining the uniqueness. Two main ingredients to find  $C^{2s+}$
regularity are $C^{1, \gamma}$ regularity of $w$ and the growth bounded on the H\"{o}lder norm of $w$ in $B_1(x)$ (see Lemma~\ref{L2.6}). We establish the $C^{1, \gamma}$ regularity for Isaacs type equations
and without requiring the coefficients to be H\"{o}lder continuous (see Theorem~\ref{c_1,gamma_regularity} below for more detail). There is a large body of works dealing with the H\"{o}lder regularity of Isaacs type
integro-differential equations. Many of these works are based on Ishii-Lions method and therefore, they require the coefficients to be H\"{o}lder continuous and $c_\tau$ to be negative, see for
instance, \cite[Corollary~4.1]{BCI11}, \cite{BCCI12,CLN19}, \cite[Theorem~3.1]{CGT22}. We do not require any such condition since our method is based on scaling argument and Liouville theorem used by
Serra in  \cite{Serra2015,Serra_Parabolic}.

Since $c_\tau$ is negative (by (\hyperlink{B1}{B1})) we can 
solve \eqref{ET1.2A} in bounded domains, for instance, ball 
$B_n(0)$, with Dirichlet exterior condition. Then, applying the local H\"{o}lder estimate from  \cite{Schwab_Silvestre} one can pass to the limit, as $n\to\infty$,
to obtain a solution of \eqref{ET1.2A}, provided the solutions in the bounded domains are dominated by a fixed barrier function. This is required to pass the limit inside the nonlocal operator. We show that $\sV$ (see \eqref{Lyap-alpha}) can be used as a barrier function. To establish uniqueness, we first show that 
$w\in C^{2s+}_{\rm loc}(\rd)$ and therefore, it is a classical solution to \eqref{ET1.2A}. Recall that $C^{2s+}$ estimate of $w$ requires global H\"{o}lder regularity of 
$w$  and local H\"{o}lder regularity of $\grad w$ (cf. \cite{Serra2015}). To attain this goal we first establish 
$C^{1, \gamma}$ estimate for $w$.
This is done in Section~\ref{S-regu}. Once we have $C^{2s+}$
regularity (see Lemma~\ref{L2.6}) we can couple two solutions and use the barrier function $\sV$ to establish uniqueness.

Coming back to Theorem~\ref{T1.1}, we apply Theorem~\ref{T1.2} to obtain solution $w_\alpha$ for the $\alpha$-discounted problem and define the
normalized function $\bar{w}_\alpha(x)=w_\alpha(x)-w_\alpha(0)$. Note that
$$
\inf_{\tau \in\mathcal{T}}(\cL_{\tau} \bar{w}_\alpha + g_{\tau}) -\alpha \bar{w_\alpha} -\alpha w_\alpha(0)=0\quad \text{in}\; \rd, \quad \bar{w}_\alpha(0)=0.
$$
Thus to complete step (b) we only need to find a convergent 
subsequence of $\{\bar{w}_\alpha\}$, as $\alpha\to 0$, and pass the limit in the above equation. The equicontinuity of
the family $\{\bar{w}_\alpha\}$ is generally obtained
by employing a generalized Harnack's type estimate (cf. \cite[Theorem~3.3]{ACGZ},\cite[Lemma~3.6.3]{red-book}). 
But, to the best of our knowledge, 
Harnack type estimate is not available for nonlinear integro-differential operators with gradient and zeroth order term.
Therefore, we innovate a different method. We again use the Lyapunov function $V$ in \eqref{Lyap} and show that
$|\bar{w}_\alpha|\leq \kappa + V$ in $\rd$ for some 
suitable constant $\kappa$ (see Lemma~\ref{L3.1}). Now applying the regularity result of \cite{Schwab_Silvestre} we can establish the equicontinuity of $\{\bar{w}_\alpha\}$.
For the step (c), we again show that $u\in C^{2s+}_{\rm loc}(\rd)$ and then use the fact $u\in\sorder(V)$. It is worth pointing out that
equation \eqref{ET1.1A} is not strictly monotone which makes the uniqueness tricky. There are only few works dealing with the
uniqueness of non-monotone operators in bounded domains, see 
Caffarelli \& Silvestre \cite{CS09}, Mou \& \'{S}wi\k{e}ch \cite{Mou-Swiech}. The $C^{2s+}$ regularity property is crucially used in the proof of uniqueness. Note that once we have $C^{1, \gamma}$ regularity, $C^{2s+}$ estimate follows from Serra \cite{Serra2015} since \eqref{ET1.1A} is of concave type. Similar approach does not work for
Isaacs type equations.

Interestingly, the above approach can be generalized to study HJB equations
for a larger family of integro-differential operators. In particular, we consider the operator
$$\cI u(x) =\inf_{\tau\in\mathcal{T}}\left[
{\rm tr}{(a_\tau(x)D^2u)} + \breve I_\tau [u](x) + b_\tau(x)\cdot\grad u + g_\tau \right],$$
where $\breve I_\tau$ is a general L\'evy type nonlocal operator (see Section~\ref{S-mixed}). We show that the above approach extends for these class of operators and we obtain the following result in Section~\ref{S-mixed}.
\begin{thm}\label{T1.3}
Suppose that Assumptions~\ref{A4.1} and ~\ref{A4.2}  hold. Then there exists a
unique pair $(u, \lambda^*)\in\sorder(V)\times\mathbb{R}$ satisfying
\begin{equation}\label{ET4.1A}
\cI u(x) -\lambda^*=0\quad \text{in}\; \rd, \quad u(0)=0.
\end{equation}
\end{thm}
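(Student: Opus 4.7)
The plan is to transfer the three-step scheme used for Theorem~\ref{T1.1} to the mixed local-nonlocal setting, which requires a mixed-operator analogue of Theorem~\ref{T1.2} together with an $\alpha$-discount limit driven by the Foster–Lyapunov function $V$. The overall structure remains: (a) solve the $\alpha$-discounted problem, (b) pass to the ergodic limit via a uniform Lyapunov bound, and (c) upgrade regularity to get uniqueness.

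For step~(a), I would first establish the mixed analogue of Theorem~\ref{T1.2}: for each $\alpha\in(0,1)$ there exists $w_\alpha \in C(\rd)\cap \sorder(V)$ solving $\cI w_\alpha - \alpha w_\alpha = 0$ in $\rd$. Following Section~\ref{S-alpha} essentially verbatim, one solves Dirichlet problems in balls $B_n(0)$ using Perron's method (Assumption~\ref{A4.1} supplies the Lyapunov function, and the zeroth-order coercive term $-\alpha w_\alpha$ provides strict monotonicity needed for comparison), then passes $n\to\infty$. The growth condition \eqref{EA.3B} on $V$ (assumed again in Assumption~\ref{A4.1}) ensures $V$ serves as a global barrier dominating $|w_\alpha|$, which is what allows passage of the limit inside the nonlocal operator $\breve I_\tau$. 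Interior Hölder estimates for mixed operators (either by freezing the second-order part and invoking Schwab–Silvestre, or directly from Caffarelli–Silvestre type estimates for mixed operators) then supply compactness.

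For step~(b), define $\bar w_\alpha := w_\alpha - w_\alpha(0)$, so that
\begin{equation*}
\cI \bar w_\alpha - \alpha \bar w_\alpha - \alpha w_\alpha(0) = 0 \quad \text{in } \rd, \qquad \bar w_\alpha(0) = 0.
\end{equation*}
The key estimate, mirroring Lemma~\ref{L3.1}, is a uniform bound $|\bar w_\alpha| \le \kappa + V$ independent of $\alpha$: this is obtained by comparing $w_\alpha$ with $\pm(C + V/\alpha\text{-corrections})$ via \eqref{Lyap} applied to the mixed operator, using that $\cI[V] \le k_0 - h$ is still available by Assumption~\ref{A4.1}. Simultaneously, $|\alpha w_\alpha(0)| \le \sup|g_\tau|$ remains bounded, so along a subsequence $\alpha w_\alpha(0) \to \lambda^*$. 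The uniform bound combined with interior Hölder regularity for $\cI$ yields equicontinuity on compact sets, hence $\bar w_\alpha \to u$ locally uniformly (along a subsequence) for some $u\in C(\rd)\cap\sorder(V)$. Stability of viscosity solutions for the mixed operator under locally uniform convergence—with the $\sorder(V)$ decay controlling tails—gives $\cI u - \lambda^* = 0$ in $\rd$ and $u(0)=0$.

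For step~(c), uniqueness again hinges on promoting $u$ to a classical solution. I would first derive $C^{1,\gamma}_{\rm loc}$ regularity for $u$ by the scaling–Liouville approach of Serra, adapted in Section~\ref{S-regu} to the mixed setting (the second-order term scales with the lower order and is dominated by the $2s$-order nonlocal part in the blow-up limit since $s>1/2$), and then $C^{2,\alpha}_{\rm loc}$ regularity via classical Evans–Krylov/Caffarelli theory exploiting the concavity in $D^2u$ together with the already established $C^{1,\gamma}$ and H\"older–in–$x$ control of coefficients (Assumption~\ref{A4.2}). With $u$ a classical solution of growth $\sorder(V)$, coupling two solutions $(u_1,\lambda_1^*),(u_2,\lambda_2^*)$ and testing the difference against the supersolution $\varepsilon(1+V)$ yields $\lambda_1^* = \lambda_2^*$ by letting $\varepsilon\to 0$, after which the strong maximum principle and the normalization $u_i(0)=0$ force $u_1\equiv u_2$.

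The main obstacle is the regularity theory that fuels step~(c): the purely nonlocal $C^{2s+}$ estimate of Serra~\cite{Serra2015} does not apply directly to the mixed operator, and one must instead combine the $C^{1,\gamma}$ estimate from Section~\ref{S-regu} with a classical $C^{2,\alpha}$ estimate for the second-order part. Showing that the nonlocal term $\breve I_\tau [u]$ is H\"older continuous as a function of $x$ once $u \in C^{1,\gamma}$, so that the second-order equation has H\"older data, is the step where the polynomial growth condition \eqref{EA.3B} on $V$ and the tail control inherited from $u\in\sorder(V)$ both play essential roles.
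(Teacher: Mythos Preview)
Your three-step scheme matches the paper's proof, and steps (a)--(b) are essentially what the paper does (solving Dirichlet problems on balls via \cite{Mou2019}, using $V$ as a barrier, then extracting the ergodic limit via the analogue of Lemma~\ref{L3.1}). The gap is in step~(c), specifically in your claim that ``the second-order term scales with the lower order and is dominated by the $2s$-order nonlocal part in the blow-up limit since $s>1/2$.'' This has the hierarchy exactly backwards. In the mixed setting of Section~\ref{S-mixed} the diffusion matrix $a_\tau$ is uniformly elliptic (Assumption~\ref{A4.1}(i)) while $\breve{I}_\tau$ is a \emph{general} L\'evy operator of order strictly below~$2$ (Assumption~\ref{A4.1}(ii)); there is no parameter $s$ here at all. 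Under the rescaling $x\mapsto x_0+rx$ the second-order part is the leading term, scaling like $r^{-2}$, and the nonlocal part is a lower-order perturbation. Consequently the Serra-type blow-up of Section~\ref{S-regu}, whose limit equation is the translation-invariant \emph{nonlocal} operator, does not carry over: the blow-up limit in the mixed case is a pure second-order equation, and the Liouville step, the compactness, and the very notion of ``$C^{2s+}$'' are all different.

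What the paper actually does for regularity is to treat the second-order part as principal throughout. After the rescaling $v(x)=u(x_0+rx)$ with $r=[1+V(x_0)]^{-\upmu}$, the scaled nonlocal kernel $r^{d+2}K(\cdot,r\cdot)$ is controlled either via \eqref{EA4.2C} (case~(a)) or by bounded drift (case~(b)), and one applies the H\"older estimate for mixed operators \cite[Theorem~4.2]{Mou2019} or \cite[Lemma~2.1]{MZ21} to get the growth bound analogous to \eqref{ET2.4G}. The $C^{2+}$ upgrade then comes from \cite[Theorem~5.3]{MZ21}, not from Serra's $C^{2s+}$ estimate, and uniqueness uses the weak Harnack inequality for mixed operators \cite[Theorem~3.12]{Mou2019}. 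Your suggestion of ``freezing the second-order part and invoking Schwab--Silvestre'' likewise cannot work, since the ellipticity lives in $a_\tau$, not in the nonlocal kernel. (A minor point: it is Assumption~\ref{A4.2}, not~\ref{A4.1}, that carries the Lyapunov function.)
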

Theorem~\ref{T1.3} should be compared with Arapostathis et.\ al. \cite{ACGZ} where similar problem has been considered for nonlocal kernels having compact support and finite measure.
We conclude the introduction with an example satisfying Assumption~\ref{assumptions}.
This example is inspired from \cite{ABC16}.
\begin{example}\label{Eg1.1}
Consider a function $0\leq V\in C^2(\mathbb{R}^d)$ satisfying $V(x)=|x|^\upgamma$ for $|x|\geq 1$, and $\upgamma\in (0,2s)$. 
We observe that for any $|x|\leq 1$, $|V(x)|\leq C$  and therefore $V(x)\leq C+|x|^\upgamma$ for all $x \in \mathbb{R}^d$. All the inequalities below are true upto a constant.

\noindent\textbf{Case-I} Let $x\in B_R$ and $R>2$, then
\begin{align*}
\bigg|\int_{B_{R+1}}\delta(V,x,y)\frac{k_{\tau}(x,y)}{|y|^{d+2s}} \,dy\bigg|\leq ||D^2V||_{L^\infty(B_{2R+1})} \int_{{B_{R+1}}}\frac{|y|^2}{|y|^{d+2s}}\,dy =\norm{D^2V}_{L^\infty(B_{2R+1})}  (R+1)^{(2-2s)}
\end{align*}
Now observe that for $|y|\geq R+1$ and $|x|\leq R$, we have $|x\pm y|\geq 1$. Therefore 
\begin{align*}
\bigg|\int_{B^c_{R+1}}\delta(V,x,y)\frac{k_{\tau}(x,y)}{|y|^{d+2s}} \,dy\bigg|=&\bigg|\int_{B^c_{R+1}}\frac{|x+y|^\upgamma+|x-y|^\upgamma-2V(x)}{|y|^{d+2s}} \,dy\bigg| \\
\lesssim &\int_{B^c_{R+1}}\frac{2|y|^\upgamma +2(|x|^\upgamma+|V(x)|)}{|y|^{d+2s}} \,dy
\\
\lesssim & (R+1)^{\upgamma -2s} + (R^\upgamma +||V||_{L^\infty(B_{R})})R^{-2s}.
\end{align*}

\noindent\textbf{Case-2} Let $x \in B^c_R$. Then
\begin{align}\label{E1.1A}
\bigg|\int_{B_{\frac{R}{2}}}\delta(V,x,y)\frac{k_{\tau}(x,y)}{|y|^{d+2s}} \,dy\bigg|\leq ||D^2V||_{L^\infty(\mathbb{R}^d)}C_R.
\end{align}
Now to compute the integration on $B^c_{\frac{R}{2}}$ we define
\begin{align*}
J_{\tau}:=\int_{\frac{R}{2}\leq |y|\leq \frac{|x|}{2}}\delta(V,x,y) \frac{k_{\tau}(x,y)}{|y|^{d+2s}} \,dy .
\end{align*}
Note that $|y|\leq \frac{|x|}{2}\implies |x\pm y|\geq |x|-\frac{|x|}{2}\geq \frac{|x|}{2}\geq \frac{R}{2} \geq 1$. Therefore,
\begin{align*}
J_{\tau}&=\int_{\frac{R}{2}\leq |y|\leq \frac{|x|}{2}}(|x+ y|^\upgamma+|x-y|^\upgamma -2|x|^\upgamma) \frac{k_{\tau}(x,y)}{|y|^{d+2s}} \,dy
\\
&= |x|^{\upgamma-2s}\int_{ \frac{R}{2|x|}\leq |z|<\frac{1}{2}}\bigg(\bigg|\frac{x}{|x|}+ z\bigg|^\upgamma+\bigg|\frac{x}{|x|}-z\bigg|^\upgamma -2\bigg) \frac{k_{\tau}(x,|x|z)}{|z|^{d+2s}} \,dz
\end{align*}
Now observe that for any $|z|<\frac{1}{2}$ we have  $\big|\frac{x}{|x|}\pm z\big|\geq 1 -|z|\geq \frac{1}{2}$. Therefore we can apply Taylor's formula to conclude
\begin{align}\label{E1.1B}
J_{\tau}\leq \tilde{C}|x|^{\upgamma-2s} \int_{  |z|<\frac{1}{2}}\frac{|z|^2}{|z|^{d+2s}}\,dz\leq C |x|^{\upgamma-2s}.
\end{align}
Let us denote by $\tilde{\delta}(x,y)=|x+y|^\gamma+|x-y|^\gamma-2|x|^\gamma$ and observe that
\begin{align}\label{E1.1C}
\int_{\frac{|x|}{2}\leq |y|}\delta(V,x,y)\frac{k_{\tau}(x,y)}{|y|^{d+2s}}\,dy=\int_{\frac{|x|}{2}\leq |y|}\Big(\delta(V,x,y)-\tilde{\delta}(x,y)\Big)
\frac{k_{\tau}(x,y)}{|y|^{d+2s}}\,dy+\int_{\frac{|x|}{2}\leq |y|}\tilde{\delta}(x,y)\frac{k_{\tau}(x,y)}{|y|^{d+2s}}\,dy.
\end{align}
Furthermore, as for $|x|<2 |y|$ we have $||x\pm y|^\gamma-|x|^\gamma|\leq 9 |y|^\gamma$, it can be easily seen that
\begin{align*}
\bigg|\int_{\frac{|x|}{2}\leq |y|}\tilde{\delta}(x,y)K_{\tau}(x,y)\,dy\bigg|\leq C \int_{\frac{|x|}{2}\leq |y|} \frac{|y|^\upgamma}{|y|^{d+2s}}\,dy\leq C|x|^{\upgamma-2s}.
\end{align*}
Now to calculate the first integral
on the rhs of \eqref{E1.1C}, we observe that if both $|x+y|$ and $|x-y|$ are bigger than $1$, then $\delta(V,x,y)-\tilde{\delta}(x,y)=0$. Again, if $|x + y|\leq 1$ (or $|x - y|\leq 1$) then
\begin{align*}
\frac{|x|}{2}\leq |y|\leq |x|+|x+y|\leq |x|+1\leq \frac{3|x|}{2},
\end{align*}
and hence,
\begin{align}\label{E1.1D}
\int_{\frac{|x|}{2}\leq |y|}\Big|\delta(V,x,y)-\tilde{\delta}(x,y)\Big|\frac{k_{\tau}(x,y)}{|y|^{d+2s}}\,dy\leq  &\int_{\frac{|x|}{2}\leq |y|\leq \frac{3|x|}{2}}|\delta(V,x,y)-\tilde{\delta}(x,y)|\frac{k_{\tau}(x,y)}{|y|^{d+2s}}\,dy\nonumber
\\
&\lesssim |x|^{\upgamma-2s-d}  \int_{\frac{|x|}{2}\leq |y|\leq \frac{3|x|}{2}} \,dy \leq C|x|^{\upgamma-2s}.
\end{align}
Choose $\theta\geq 0, \upgamma\in (s+\frac{1}{2}, 2s)$ satisfying 
$$\theta+\gamma-1>0,\quad \theta<(2s-\upgamma)(2s-1)
<(\upgamma-1)(2s-1).$$
Set $\upmu=\frac{\theta}{\upgamma(2s-1)}$.
Now suppose that $b_{\tau}(x)\cdot x\leq -|x|^{\theta+1}$ outside a fixed
compact set independent of $\tau$ and 
$$\sup_{\tau\in \mathcal{T}}\abs{b_\tau}\leq C(1+|x|^\theta),\quad \sup_{\tau\in \mathcal{T}}|g_{\tau}|\leq C(1+ |x|^{\frac{2s\theta}{2s-1}}) , $$
then 
\begin{equation}\label{E1.1E}
\limsup_{|x|\to\infty}\sup_{\tau \in\mathcal{T}}\frac{b_{\tau}(x)\cdot \nabla V(x)}
{|x|^{\theta+\upgamma-1}}<0.
\end{equation}
Thus, fixing $R=4$ in the above calculations, we get from 
\eqref{E1.1A},\eqref{E1.1B},\eqref{E1.1D} and \eqref{E1.1E} that
\begin{align*}
\sup_{\tau \in\mathcal{T}}\mathcal{L}_{\tau} [V](x)\leq k_0-k_1 |x|^{\theta+\gamma-1},
\end{align*}
for some suitable constants $k_0, k_1$.
Therefore to arrive at \eqref{Lyap}, we can take $h(x)=k_1 |x|^{\theta+\gamma-1}$. Moreover, since $\upgamma(1+\upmu)<2s$ we have
$V^{1+\upmu}\in L^1(\omega_s)$. By our choice of $\theta, \upmu$ it
is easily seen \eqref{EA.3A} holds and $V$ satisfies \eqref{EA.3B}.
\end{example}

\section{ \texorpdfstring{$\alpha$}{a}-discounted HJB Equation  with Rough Kernel}\label{S-alpha}

In this section we prove the existence of a unique classical solution
for the discounted problem. The results of this section will
be proved under a weaker setting compared to Assumption~\ref{assumptions}. In this section we consider operators of the form
\begin{equation}\label{E2.1}
\cI u(x)=\inf_{\tau \in\mathcal{T}}\bigl(\cL_{\tau} u + c_{\tau}(x) u(x)+ g_{\tau}(x)\bigr).
\end{equation}
Note that $c_{\tau}=-\alpha$ corresponds to the $\alpha$-discounted
problem. 
\begin{assumption}\label{A2.1}
We impose following assumptions on the coefficients of the equation \eqref{E2.1}.
\begin{itemize}
\item[(\hypertarget{B1}{B1})] For some positive constant $c_\circ$ we have
$$\sup_{\tau\in \mathcal{T}}c_{\tau}(x)\leq -c_\circ\quad x\in\rd.$$

\item[(\hypertarget{B2}{B2})] 
There exist an inf-compact $\sV\in C^2(\mathbb{R}^d), \sV\geq 0$ and a 
positive inf-compact function $ h\in C(\mathbb{R}^d)$, such that
\begin{align}\label{Lyap-alpha}
\sup_{\tau \in\mathcal{T}}
(\mathcal{L}_{\tau}\sV(x) + c_{\tau}(x) \sV(x)) \leq k_0 1_{\sK} - h(x),\quad x\in\rd,
\end{align}
for some $k_0>0$ and a compact set $\sK$. In addition, 
$\sup_{\tau \in\mathcal{T}}|g_{\tau}|\leq h$ and 
\begin{equation}\label{EA2.1A}
\lim_{|x|\to\infty}\,\frac{\sup_{\tau \in\mathcal{T}}|g_{\tau}(x)|}{h(x)}=0.
\end{equation}
\item[(\hypertarget{B3}{B3})] For some $\upmu\geq 0$ we have $\sV^{1+\upmu}\in L^1(\omega_s)$ and
\begin{align}
\sup_{\tau \in\mathcal{T}}\left[\frac{|b_\tau|}{(1+\sV)^{(2s-1)\upmu}}
+ \frac{|g_{\tau}|}{(1+\sV)^{1+2s\upmu}}
+ \frac{|c_{\tau}|}{(1+\sV)^{2s\upmu}}\right]&\leq C,\label{EA2.1B}
\\ 
\sup_{x, y}\frac{\sV(x+y)}{(1+\sV(x))(1+\sV(y))} + \limsup_{|x|\to\infty}\; \frac{1}{1+\sV(x)}\sup_{|y-x|\leq 1} \sV(y)& \leq C.\label{EA2.1C}
\end{align}
\item[(\hypertarget{B4}{B4})] The maps $x\mapsto k_{\tau}(x,y), b_\tau(x), g_{\tau}(x), c_{\tau}(x)$ are locally uniformly continuous
and locally bounded, uniformly in $\tau, y$.
\item[(\hypertarget{B5}{B5})] For every compact set 
$K\subset\rd$, there exists $\tilde\alpha\in (0, 1)$ so that
\begin{align*}
\sup_{\tau\in \mathcal{T}}\abs{b_\tau(x)-b_\tau(x')} 
+ \sup_{\tau\in \mathcal{T}}(\abs{g_{\tau}(x)-g_{\tau}(x')}+ \abs{c_{\tau}(x)-c_{\tau}(x')})
&+ \sup_{\tau\in \mathcal{T}}\,\sup_{y\in\rd}
|k_{\tau}(x, y)-k_{\tau}(x', y)|
\\
&\leq C_K|x-x'|^{\tilde\alpha}.
\end{align*}
\end{itemize}
\end{assumption}
Assumptions (\hyperlink{B1}{B1}) and (\hyperlink{B4}{B4})
are standard whereas (\hyperlink{B5}{B5}) is generally imposed to obtain
$C^{2s+\upkappa}$ regularity of the solutions. Let us now
cite a class of examples that satisfy 
\eqref{Lyap-alpha},\eqref{EA2.1B} and \eqref{EA2.1C}.
\begin{example}
We recall the notations from Example~\ref{Eg1.1}. Suppose
that 
\begin{equation}\label{Eg2.1A}
\sup_{\tau\in \mathcal{T}}(b_\tau\cdot x)_+\leq C_0 |x|^{\upsigma + 1}\quad \text{where}\; \upsigma\in [0, 1].
\end{equation}
Choose $\upgamma\in (0, 2s)$ so that for $\upsigma=1$ we have
$c_\circ > \upgamma C_0$ where $c_\circ$ is given by 
(\hyperlink{B1}{B1}). Let $0\leq \sV\in C^2(\mathbb{R}^d)$ be such that $\sV(x)=|x|^\upgamma$ for $|x|\geq 1$. Then the calculation in Example~\ref{Eg1.1} reveals that
\begin{equation}\label{Eg2.1B}
\sup_{\tau\in \mathcal{T}}\int_{\rd}\delta(V, x, y)\frac{k_{\tau}(x, y)}{|y|^{d+2s}}\leq C(1+|x|^{\upgamma-2s})\quad x\in B^c_4.
\end{equation}
Therefore, if we set $h(x)=\kappa |x|^{\upgamma}$ for $|x|\geq 1$, from
\eqref{Eg2.1A} and \eqref{Eg2.1B} it is easily seen that
\eqref{Lyap-alpha} holds for suitable $\kappa$ and compact set $\sK$. For (\hyperlink{B3}{B3}) to hold, we can choose
$\upmu\in [0, \frac{2s}{\upgamma}-1)$ and restrict 
the family $\{b_\tau\}_{\tau\in \mathcal{T}}$ further to satisfy
$$\sup_{\tau\in \mathcal{T}}|b_\tau(x)|\leq C(1+|x|)^{\upgamma\upmu(2s-1)}.$$
\end{example}
It is quite possible for $b_\tau$ to have super-linear growth as
we show in the example below.
\begin{example}
Let $\upsigma, \theta$ are positive numbers satisfying
$$\upsigma-1<\theta<4s^2, \quad \upsigma< 2s(2s-1).$$
Consider a family of $\{b_\tau\}, \{c_{\tau}\}$ satisfying
$$\sup_{\tau\in \mathcal{T}}|b_\tau(x)|\leq C(1+|x|^\upsigma),
\quad \sup_{\tau\in \mathcal{T}} c_{\tau}(x)\leq -c_\circ(1+|x|^\theta).$$
Then we can choose $\upgamma\in (0, 2s)$ and 
$\upmu\in (0, \frac{2s}{\upgamma}-1)$ so that 
$$\upsigma\leq \upgamma \upmu (2s-1),\quad
\text{and}\quad \theta\leq 2s \upgamma\upmu.$$
Now let $\sV$ to satisfy $\sV(x)=|x|^\upgamma$ for $|x|\geq 1$. It can be easily checked that \eqref{Lyap-alpha},
\eqref{EA2.1B} and \eqref{EA2.1C} hold for $h(x)\asymp |x|^{\theta+\upgamma}$.
\end{example}
By $C^{\eta+}_{\rm loc}(\rd)$ we denote the set of functions
that are  in $C^{\eta+\upkappa}(K)$ for every compact $K$ and for some $\upkappa>0$, possibly depending on $K$. More precisely, a function $\ell\in C^{\eta+}_{\rm loc}(\rd)$ if and only if for every compact set $K$, there
exists $\upkappa>0$ such that $\ell\in C^{\eta+\upkappa}(K)$.
The remaining part of this section is devoted to
the proof of Theorem~\ref{T1.2}. We first solve
\eqref{ET1.2A} in bounded domains (Theorem~\ref{T2.2} below)
. Then, using $\sV$ as a barrier function and the stability
estimates from \cite{Schwab_Silvestre}, we could find a 
subsequence of solutions, as the domains increase to $\rd$,
that converge to a solution of \eqref{ET1.2A}. This is done in Lemma~\ref{L2.3}. In Lemma~\ref{L2.6} we then show that this solution, obtained as a limit, is in the class
$C^{2s+}_{\rm loc}(\rd)\cap\sorder(\sV)$. Combining these results we then complete the proof of Theorem~\ref{T1.2}.

We begin with a comparison principle 
which will be used in several places.

\begin{lem}\label{L2.1}
Let $\Omega$ be a bounded domain. Suppose that
(\hyperlink{B1}{B1}) holds.
Let $u\in C(\rd)\cap L^1(\omega_s)$ be a viscosity solution to 
$$\inf_{\tau \in\mathcal{T}} \Bigl(\cL_{\tau} u + c_{\tau} u + g_{\tau}\Bigr) \geq 0\quad \text{in}\; \Omega,$$
and $v\in C(\rd)\cap L^1(\omega_s)$ is a viscosity solution to
$$\inf_{\tau \in\mathcal{T}} \Bigl(\cL_{\tau} v+ c_{\tau} v + g_{\tau}\Bigr)\leq 0\quad \text{in}\; \Omega.$$
Furthermore, assume that either $u\in C^{2}(\Omega)$ or 
$v\in C^{2}(\Omega)$.
Then, if $v\geq u$ in $\Omega^c$, we have $v\geq u$ in $\rd$.
\end{lem}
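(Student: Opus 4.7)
The proof is by contradiction. The plan is to suppose $M := \sup_{\rd}(u-v) > 0$; since $v\geq u$ on $\Omega^c$ and $u, v\in C(\rd)$ while $\bar\Omega$ is compact, the supremum is attained at some $x_0\in\Omega$, so that $u(x_0)-v(x_0)=M$ and $u \leq v + M$ pointwise on $\rd$. I will then split according to which of the two functions is classically $C^{2}$.

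First consider the case $v\in C^{2}(\Omega)$. Choose a neighbourhood $N_{x_0}\subset\Omega$ of $x_0$ and set $\varphi := v+M \in C^2(\bar N_{x_0})$. Then $\varphi-u\geq 0$ in $N_{x_0}$ with equality at $x_0$, so $\varphi$ is an admissible test function from above for the subsolution $u$. Defining $w := \varphi$ on $N_{x_0}$ and $w := u$ on $N_{x_0}^c$, the subsolution inequality gives $\inf_{\tau\in\mathcal{T}}(\cL_\tau w + c_\tau w + g_\tau)(x_0)\geq 0$. I now compare term by term with $\cL_\tau v + c_\tau v + g_\tau$ at $x_0$, computed classically since $v\in C^2$. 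The gradient contribution is unchanged because $w = v+M$ on $N_{x_0}$. For the nonlocal part, split the $y$-integral according to whether $x_0\pm y$ lie in $N_{x_0}$ or not: on $\{x_0\pm y\in N_{x_0}\}$ the two shifts of $M$ cancel and $\delta(w,x_0,y)=\delta(v,x_0,y)$, while otherwise the global bound $u\leq v+M$ together with $w(x_0)=v(x_0)+M$ gives $\delta(w,x_0,y)\leq \delta(v,x_0,y)$ pointwise in $y$; since $k_\tau\geq 0$, this yields $I_\tau[w](x_0)\leq I_\tau[v](x_0)$ for every $\tau$. Finally, by (\hyperlink{B1}{B1}), $c_\tau(x_0)w(x_0) = c_\tau(x_0)v(x_0) + c_\tau(x_0)M \leq c_\tau(x_0)v(x_0) - c_\circ M$. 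Taking the infimum uniformly in $\tau$ and using that $v$ is a classical supersolution,
$$0 \;\leq\; \inf_{\tau\in\mathcal{T}}\bigl(\cL_\tau w + c_\tau w + g_\tau\bigr)(x_0) \;\leq\; \inf_{\tau\in\mathcal{T}}\bigl(\cL_\tau v + c_\tau v + g_\tau\bigr)(x_0) - c_\circ M \;\leq\; -c_\circ M < 0,$$
which is the desired contradiction.

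If instead $u\in C^2(\Omega)$, the argument is entirely symmetric: test the supersolution $v$ from below with $\psi := u - M$, glue to $v$ outside $N_{x_0}$, and run the analogous chain of inequalities (this time the three contributions have the opposite sign) to produce the strict gain $+c_\circ M$, contradicting the supersolution inequality. The main technical point throughout is the careful bookkeeping of the nonlocal integrand across $\partial N_{x_0}$, where the glued function transitions between the smooth test and the original solution; the $L^1(\omega_s)$ integrability plus the $C^2$ smoothness inside $N_{x_0}$ ensure that $I_\tau[w](x_0)$ makes classical sense. The structural role of (\hyperlink{B1}{B1}) is equally essential: the strictly negative sign of $c_\tau$ converts the equality $w(x_0)-v(x_0)=M>0$ at the touching point into the quantitative loss $-c_\circ M$ that overpowers the merely non-strict inequalities coming from the nonlocal and drift parts, and it avoids any need for doubling-of-variables or Ishii--Lions-type arguments.
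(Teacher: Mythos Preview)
Your proof is correct and follows essentially the same route as the paper's. The only cosmetic difference is that the paper introduces $t_0=\inf\{t>0:\,v+t>u\ \text{in}\ \rd\}$ and then argues that $\psi=v+t_0$ touches $u$ from above at some interior point, whereas you go directly to $M=\sup_{\rd}(u-v)$ and the touching point $x_0$; of course $t_0=M$, so the two arguments are identical in substance. Your explicit bookkeeping of the nonlocal term across $\partial N_{x_0}$ is a slightly more detailed version of what the paper leaves implicit (it simply writes $\cL_\tau\psi(x)$, tacitly using that the glued test function is $\le\psi$ globally with equality at the contact point), and your treatment of the symmetric case $u\in C^2(\Omega)$ is fine.
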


\begin{proof}
Without any loss of generality assume that $v\in C^{2}(\Omega)$ and
therefore, it is a classical supersolution.
Suppose, to the contrary, that 
$\sup_\Omega(u-v)>0$. Define
$$t_0=\inf\{t>0\; :\; v + t> u\;\; \text{in}\; \rd\}.$$
It is evident that $t_0\leq 2 \sup_\Omega(u-v)$. Again, since
$\sup_\Omega(u-v)>0$, we must have $t_0>0$. Let $\psi(x)=v(x)+ t_0$. From the definition we have $\psi\geq u$. Since $\psi-u\geq t_0$ in $\Omega^c$ and $\Omega$ is bounded, it follows that $\psi(x)=u(x)$ for some $x\in \Omega$. Therefore, $\psi$ is a
valid test function at $x$. Hence, from the definition of
viscosity subsolution, we must have
$$0\leq \inf_{\tau \in\mathcal{T}} \Bigl(\cL_{\tau} \psi(x) +
c_{\tau}(x)\psi(x) + g_{\tau}(x)\Bigr)
=\inf_{\tau \in\mathcal{T}} \Bigl(\cL_{\tau} v(x) + c_{\tau}(x) v(x) + g_{\tau}(x)\Bigr)-c_\circ t_0\leq -c_\circ t_0<0.$$
But this is contradiction. Hence we must have $v\geq u$ in $\rd$.
\end{proof}
Next result is a  $C^{1, \gamma}$ regularity estimate. This
is a special case of Theorem~\ref{c_1,gamma_regularity} which
we prove in Section~\ref{S-regu}.
\begin{thm}\label{T-reg}
Let $u$ be a viscosity solution to
$$\inf_{\tau \in\mathcal{T}}
\bigg[I_{\tau}[u](x)+b_{\tau}(x) \cdot \nabla u(x)+g_{\tau}(x) \bigg]=0 \quad \text{in}\; B_1.
$$
Let $\sup_{\tau}\norm{b_{\tau}}_{L^\infty(B_1)}\leq C_0$ and
$\gamma\in (0, 2s-1)$. Suppose that for some modulus of continuity $\varrho$ we have
\begin{align*}
  |k_{\tau}(x_1,y)-k_{\tau}(x_2,y)|\leq \varrho(|x_1-x_2|)\quad \forall x_1,x_2 \in B_1,\; \tau\in \mathcal{T}, y\in\rd.
  \end{align*}
Then we have
\begin{align*}
\norm{u}_{C^{1,\gamma}(B_{\frac{1}{2}})}\leq C\left(\norm{u}_{L^\infty(\mathbb{R}^d)}+\sup_{\tau\in\mathcal{T}}\norm{g_{\tau}}_{L^\infty(B_1)}\right),
 \end{align*}
 where the constant $C$ depends on $d,s,C_0,\varrho,\lambda, \Lambda$.
\end{thm}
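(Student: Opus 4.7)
The plan is to follow the blow-up/compactness scheme of Serra \cite{Serra2015}, reducing the $C^{1,\gamma}$ claim to a Liouville theorem for translation-invariant nonlocal extremal equations. The key observation is that, because $s>\tfrac12$ and $\gamma<2s-1$, both the drift $b_\tau\cdot\nabla u$ and the source $g_\tau$ are strictly lower order than $I_\tau$ with respect to the $C^{1,\gamma}$ scaling and will therefore disappear in the blow-up limit.

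First I would normalize so that $\|u\|_{L^\infty(\mathbb{R}^d)}+\sup_\tau\|g_\tau\|_{L^\infty(B_1)}\leq 1$ and argue by contradiction. Supposing the estimate fails, I would produce sequences of kernels $k_{\tau,k}$ (still satisfying the uniform ellipticity bounds and the modulus $\varrho$), drifts $b_{\tau,k}$, sources $g_{\tau,k}$, and solutions $u_k$ for which the Campanato-type quantity
\[
\Theta_k(x_0,r):=r^{-(1+\gamma)}\,\inf_{a\in\mathbb{R},\,p\in\mathbb{R}^d}\|u_k-a-p\cdot(\cdot-x_0)\|_{L^\infty(B_r(x_0))}
\]
is unbounded as $x_0$ ranges in $\overline{B_{1/2}}$ and $r\in(0,r_0)$. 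Choosing the standard maximal-radius subsequence of $(x_k,r_k)$, with $r_k\to 0$, and subtracting off the optimal affine approximant $A_k$ of $u_k$ on $B_{r_k}(x_k)$, I would set
\[
v_k(z)=\frac{u_k(x_k+r_kz)-A_k(x_k+r_kz)}{r_k^{1+\gamma}\,\Theta_k(x_k,r_k)}.
\]
The maximality of $r_k$ produces the a priori polynomial-growth control $\|v_k\|_{L^\infty(B_R)}\leq CR^{1+\gamma}$ for $R\geq 1$, the normalization $\|v_k\|_{L^\infty(B_1)}\asymp 1$, and the property that $v_k$ has vanishing affine part on $B_1$.

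Next I would rewrite the equation satisfied by $v_k$. The scaling introduces a factor $r_k^{2s-1-\gamma}$ on the drift and $r_k^{2s-\gamma}$ on the source, both of which tend to $0$ since $\gamma<2s-1$, while the nonlocal operator becomes $\widetilde I_{\tau,k}$ with kernel $k_{\tau,k}(x_k+r_kz,r_ky)$. The modulus of continuity $\varrho$ and the uniform bounds on $k_\tau$ allow one to pass, up to a subsequence, to translation-invariant limit kernels $k_\tau^\infty(y)$ in the same ellipticity class. To justify passage to the limit as viscosity solutions I would apply the interior $C^\alpha$ estimates of \cite{Schwab_Silvestre} to the rescaled problems, combined with the polynomial-growth tail control, giving local equicontinuity of $\{v_k\}$ on every compact set; an Arzel\`a-Ascoli/diagonal argument yields a limit $v_\infty\in C(\mathbb{R}^d)$ satisfying $|v_\infty(z)|\leq C(1+|z|^{1+\gamma})$, with $\|v_\infty\|_{L^\infty(B_1)}\gtrsim 1$ and vanishing affine part on $B_1$, solving
\[
\inf_{\tau\in\mathcal{T}}\,I_\tau^\infty[v_\infty]=0\qquad\text{in }\mathbb{R}^d
\]
in the viscosity sense.

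The main obstacle is the final Liouville step: any global viscosity solution of such a translation-invariant, concave-type extremal equation whose growth at infinity is strictly less than $2s$ must be an affine function. I would establish this by an incremental-quotient argument in the style of \cite{Serra2015,Serra_Parabolic}: for a unit vector $e$ and $h>0$, the quotient $\bigl(v_\infty(\cdot+he)-v_\infty\bigr)/h^\beta$ with $\beta\in(\gamma,2s-1)$ satisfies extremal Pucci-type inequalities with translation-invariant kernels and inherits a strictly smaller growth exponent $1+\gamma-\beta<1$; iterating the estimate for shrinking $\beta$ forces the first-order differences of $v_\infty$ to vanish, so $v_\infty$ is affine. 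Combined with the vanishing affine part on $B_1$ this gives $v_\infty\equiv0$, contradicting $\|v_\infty\|_{L^\infty(B_1)}\gtrsim1$, and completes the proof. The condition $\gamma<2s-1$ is used both to make the drift subcritical under the rescaling and to ensure that the growth $1+\gamma$ falls strictly below the order $2s$ needed for the Liouville theorem.
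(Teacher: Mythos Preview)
Your proposal follows essentially the same blow-up/compactness scheme the paper carries out in Section~\ref{S-regu} (Theorem~\ref{c_1,gamma_regularity}): normalize, contradict via a Campanato quantity, rescale around a near-extremal point, use Schwab--Silvestre for compactness, pass to a translation-invariant limit equation, and conclude by Serra's Liouville theorem.

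One point you gloss over deserves attention. When you subtract the optimal affine approximant $A_k(x)=b_k+p_k\cdot(x-x_k)$ and rescale, the drift term does \emph{not} simply pick up a factor $r_k^{2s-1-\gamma}$. It splits: the part acting on $\nabla v_k$ carries the harmless factor $r_k^{2s-1}$, but there is a leftover source $b_\tau\cdot p_k$ coming from $\nabla A_k$, and after normalization this becomes $r_k^{2s-1-\gamma}\Theta_k^{-1}\,b_\tau\cdot p_k$. For this to vanish you need $|p_k|\le C(1+\Theta_k)$, which is not automatic---the slope of the best affine fit at a tiny scale can in principle be very large. The paper proves this bound by a dyadic telescoping argument (comparing $a^*(k,r_m,z_m)$ with $a^*(k,2^{l_m-1}r_m,z_m)$ at a scale of order one) using the monotonicity of $\Theta$. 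Your sketch should include this step; without it the rescaled source need not go to zero and the contradiction does not close.
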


Next we prove an existence result in the bounded domains.

\begin{thm}\label{T2.2}
Let $\Omega$ be a bounded $C^1$ domain. 
Assume that (\hyperlink{B1}{B1}) and (\hyperlink{B4}{B4}) hold.
Then  there exists a 
viscosity solution $W$ satisfying
\begin{equation}\label{ET2.2A}
\inf_{\tau \in\mathcal{T}} \Bigl(\cL_{\tau} W + c_{\tau} W  + g_{\tau}\Bigr)=0\quad \text{in}\; \Omega, \quad
\text{and}\quad W=0\quad \text{in}\; \Omega^c.
\end{equation}
In addition, if (\hyperlink{B5}{B5}) holds, then $W\in C^{2s+}_{\rm loc}(\Omega)\cap C^\upkappa(\rd)$ for some $\upkappa>0$ and $W$ is the unique
solution to \eqref{ET2.2A}.
\end{thm}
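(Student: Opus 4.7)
The plan is to construct $W$ by Perron's method, upgrade regularity under (\hyperlink{B5}{B5}) via Theorem~\ref{T-reg} and Serra's $C^{2s+}$ estimate, and deduce uniqueness by a coupling argument in the spirit of Lemma~\ref{L2.1}. For \textbf{global sub/supersolutions}, set $M := c_\circ^{-1}\sup_{\tau,\,x\in\overline\Omega}|g_\tau(x)| + 1$ (finite by (\hyperlink{B4}{B4})) and take $\overline W := M\,\mathbf{1}_{\overline\Omega}$, $\underline W := -\overline W$. At any interior $x$ the nonlocal integrand $\delta(\overline W,x,y)$ is nonpositive (since $\overline W(x)=M$ is maximal), any admissible $C^2$ test function touching $\overline W$ from above must be locally flat with vanishing gradient, and $c_\tau \overline W + g_\tau \leq -c_\circ M + |g_\tau| \leq 0$; thus $\overline W$ is a viscosity supersolution and symmetrically $\underline W$ a subsolution. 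Continuity across $\partial\Omega$ is enforced by constructing localized boundary barriers $\Phi_{x_0}(x) = C\,\mathrm{dist}(x,\Omega^c)^{s}\chi(x)$ at each $x_0\in\partial\Omega$, using the $C^1$ regularity of $\partial\Omega$ to produce the outer geometry — such barriers are standard in the fractional Dirichlet theory.

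\textbf{Perron's method.} Define $W(x) := \sup\{v(x) : v\leq \overline W,\; v\equiv 0 \text{ on } \Omega^c,\; v \text{ a viscosity subsolution in } \Omega\}$. The nonlocal Perron machinery (cf.\ Mou, Caffarelli--Silvestre) shows that the upper semicontinuous envelope $W^*$ is a subsolution and the lower envelope $W_*$ a supersolution; the boundary barriers then force $W^* = W_* = 0$ on $\partial\Omega$. Comparison between $W^*$ and $W_*$ — not immediate from Lemma~\ref{L2.1}, whose hypothesis requires one candidate to be $C^2$ — is obtained by approximation: mollify $k_\tau, b_\tau, c_\tau, g_\tau$ in $x$ and replace $\mathcal{T}$ by increasing finite subsets; the resulting problems admit $C^{2s+}$-classical solutions to which Lemma~\ref{L2.1} applies directly, and one passes to the limit using the Schwab--Silvestre H\"older estimate \cite{Schwab_Silvestre} for compactness.

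\textbf{Regularity under (\hyperlink{B5}{B5}).} Rescaling any $B_r(x_0)\Subset\Omega$ to the unit ball and invoking Theorem~\ref{T-reg} yields $W\in C^{1,\gamma}_\loc(\Omega)$ for some $\gamma\in(0,2s-1)$. Because $\cI$ is concave (an infimum of linear operators) and the coefficients are $\tilde\alpha$-H\"older by (\hyperlink{B5}{B5}), Serra's $C^{2s+}$ estimate for concave nonlocal equations upgrades $W$ to $C^{2s+}_\loc(\Omega)$, so the equation holds pointwise. Global $C^\upkappa(\rd)$ regularity combines the interior Schwab--Silvestre H\"older estimate with the boundary barriers constructed above.

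\textbf{Uniqueness and main obstacle.} For two classical solutions $W_1, W_2\in C^{2s+}_\loc(\Omega)$ with $W_1=W_2=0$ on $\Omega^c$, if $t_0 := \sup_\Omega(W_1-W_2) > 0$ then $\psi := W_2 + t_0$ touches $W_1$ from above at some $x_0\in\Omega$. Since $\nabla\psi(x_0) = \nabla W_1(x_0)$ and $\delta(\psi,x_0,y)\geq \delta(W_1,x_0,y)$ pointwise, using (\hyperlink{B1}{B1}) and the equations for $W_1,W_2$ at $x_0$,
\[
0 = \inf_\tau(\cL_\tau W_1 + c_\tau W_1 + g_\tau)(x_0) \leq \inf_\tau(\cL_\tau\psi + c_\tau\psi + g_\tau)(x_0) \leq -c_\circ t_0 < 0,
\]
a contradiction; symmetry gives $W_1 = W_2$. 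The \textbf{hardest step} is closing Perron's argument with only the limited comparison of Lemma~\ref{L2.1} available — this is why the smoothing/finite-$\mathcal{T}$ approximation is needed. The uniform Schwab--Silvestre H\"older estimates supply the compactness to extract a convergent subsequence, and the uniqueness argument above, once the $C^{2s+}$ regularity of the limit is in hand, identifies it as the unique $W$.
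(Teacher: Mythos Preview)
Your regularity and uniqueness arguments are essentially those of the paper: Theorem~\ref{T-reg} for $C^{1,\gamma}$, Serra's concave estimate for $C^{2s+}$ (the paper cites \cite[Theorem~1.3]{Serra2015}), and the touching argument for uniqueness is precisely the content of Lemma~\ref{L2.1}. One ordering correction: Serra's $C^{2s+}$ estimate takes as input a global H\"older bound on $W$ (to control the nonlocal tail), so $W\in C^\upkappa(\rd)$ must be established \emph{before} invoking Serra, not after. The paper does this in the right order --- first $|W|\leq M$ by comparing with constants via Lemma~\ref{L2.1}, then the boundary decay $|W|\leq CM\,\mathrm{dist}(x,\Omega^c)^\upkappa$ from the barrier in \cite[Lemma~5.10]{MOU-2017}, then interior H\"older from Schwab--Silvestre, then $C^{1,\gamma}$ via Theorem~\ref{T-reg}, and only then Serra.

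The genuine gap is in the existence step. Closing Perron requires comparing a viscosity subsolution $W^*$ with a viscosity supersolution $W_*$, neither of which is known to be $C^2$, and Lemma~\ref{L2.1} does not cover that case. Your workaround is circular as written: you assert that the mollified, finite-$\mathcal{T}$ problems ``admit $C^{2s+}$-classical solutions,'' but producing those solutions is exactly the existence question you are trying to settle --- the regularity theory only upgrades an \emph{already-existing} viscosity solution, and running Perron for the approximate problem faces the same missing comparison. To make this route rigorous you would need either a bona fide viscosity comparison principle for the approximate equation (e.g.\ doubling of variables with Lipschitz coefficients) or an independent existence mechanism for the smooth-coefficient problem (continuity method, fixed point), neither of which you supply. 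The paper bypasses all of this by invoking \cite[Corollary~5.7]{MOU-2017} directly for existence under (B1) and (B4); that reference already contains the Perron machinery with the needed viscosity comparison for this class of operators. (A minor slip: for $\overline W$ to be a supersolution one tests with $\varphi$ touching from \emph{below}, not above, though your subsequent computation is correct.)
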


\begin{proof}
Existence of a viscosity solution follows from \cite[Corollary~5.7]{MOU-2017}. 

Next we show that $W\in C^\upkappa(\rd)$ for 
some $\upkappa>0$. Let $M=c_\circ^{-1}\sup_{a}\norm{g_{\tau}}_{L^\infty(\Omega)}$. Using Lemma~\ref{L2.1} we obtain $W\leq M$. Analogous argument also gives $W\geq -M$.
Therefore, using the barrier function of \cite[Lemma~5.10]{MOU-2017}, it is standard to show that $|W|\leq CM\delta^\upkappa(x)$, where $\delta(x)={\rm dist}(x, \Omega^c)$
(see for instance, \cite[Theorem~2.6]{B20}). Again, by
\cite[Theorem~7.2]{Schwab_Silvestre}, $W$ is H\"{o}lder
continuous in the interior. It is now quite standard to
show that $W\in C^\upkappa(\rd)$ for some $\upkappa>0$
(see again, the proof of \cite[Theorem~2.6]{B20}).  
From Theorem~\ref{T-reg} we also see that 
$W\in C^{1, \gamma}(\Omega_\sigma)$ for some $\gamma\in (0,2s-1)$ where 
$$\Omega_\sigma=\{x\in\Omega\; :\; {\rm dist}(x, \Omega^c)>\sigma\}, \quad \sigma>0.$$
 Therefore, by (\hyperlink{B5}{B5}), the function
$$x\mapsto b_\tau(x)\cdot \grad W(x) + c_{\tau}(x) W(x) + g_\tau(x)$$
is H\"{o}lder continuous in $\Omega_\sigma$, uniformly in
$\tau$, for every $\sigma>0$. Thus, using \cite[Theorem~1.3]{Serra2015},
 we obtain that $W\in C^{2s+\upkappa}(\Omega_\sigma)$ for some $\upkappa>0$. Thus $W\in C^{2s+}_{\rm loc}(\Omega)\cap C^\upkappa(\rd)$. In particular, $W$ is a classical solution to \eqref{ET2.2A}.

To establish the uniqueness, let $\tilde{W}$ be a viscosity solution to
\eqref{ET2.2A}. The preceding argument shows that $\tilde{W}$ is a classical solution. Hence, from the ellipticity property, it follows that
$$\sup_{\tau\in \mathcal{T}}(\cL_{\tau}(W-\tilde{W}) + c_{\tau}(W-\tilde{W}))\geq 0 \quad \text{in}\; \Omega.$$
Since $0$ is a supersolution to the above equation, applying
Lemma~\ref{L2.1} it follows that $W\leq \tilde{W}$ in $\rd$. From the symmetry we also have $\tilde{W}\leq W$,  giving us $W=\tilde{W}$. This completes the proof.
\end{proof}

\begin{rem}
In several places below we use the H\"{o}lder estimate from \cite[Theorem~7.2]{Schwab_Silvestre}.
Though the results of \cite{Schwab_Silvestre} are proved for nonlocal parabolic operators, we can still apply the results to our setting by
treating the solutions as stationary functions of the time variable.
\end{rem}

Our next step is to construct a solution for
\eqref{ET2.2A} in the whole space.
\begin{lem}\label{L2.3}
Suppose that (\hyperlink{B1}{B1}),
(\hyperlink{B2}{B2}) and (\hyperlink{B4}{B4}) hold.
Let $W_n$ be a viscosity solution to
\eqref{ET2.2A} in the ball $\Omega=B_n$. Then there exists
a subsequence $W_{n_k}$ such that $W_{n_k}\to w$, uniformly
on compacts and $w\in C(\rd)\cap L^1(\omega_s)$ satisfies
\begin{equation}\label{EL2.3A}
\inf_{\tau \in\mathcal{T}} \Bigl(\cL_{\tau} w + c_{\tau} w+ g_{\tau}\Bigr) =0\quad \text{in}\; \rd.
\end{equation}
Furthermore,  $|w(x)|\leq\frac{k_0}{c_\circ} + \sV(x)$ in $\rd$.
\end{lem}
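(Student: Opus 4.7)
My approach rests on three ingredients: (a) a pointwise barrier bound $|W_n| \leq \Phi := \sV + k_0/c_\circ$ on $\rd$, uniform in $n$; (b) interior H\"older precompactness from \cite{Schwab_Silvestre}; (c) a viscosity stability argument, in which the uniform integrable domination by $\Phi$ handles tails when passing to the limit inside the nonlocal integral.

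\textbf{Barrier bound.} I will first verify that $\Phi$ is a classical supersolution on all of $\rd$. For each $\tau \in \mathcal{T}$, using \eqref{Lyap-alpha}, (\hyperlink{B1}{B1}), and $|g_\tau|\leq h$,
\[
\cL_\tau \Phi + c_\tau \Phi + g_\tau \;=\; \cL_\tau \sV + c_\tau \sV + \tfrac{k_0}{c_\circ}c_\tau + g_\tau \;\leq\; k_0\mathbf{1}_{\sK} - h - k_0 + h \;\leq\; 0,
\]
so $\inf_\tau(\cL_\tau \Phi + c_\tau \Phi + g_\tau)\leq 0$. Since $W_n = 0 \leq \Phi$ in $B_n^c$, Lemma~\ref{L2.1} yields $W_n \leq \Phi$ throughout $\rd$. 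Applying the symmetric argument with $-\Phi$ (a subsolution thanks to $g_\tau \geq -h$) gives $W_n \geq -\Phi$. This pointwise bound also survives in the limit, establishing the final claim of the lemma.

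\textbf{Compactness.} Fix $r > 0$ and any $n > 2r$. By (\hyperlink{B4}{B4}), the drifts $b_\tau$, zeroth-order terms $c_\tau$, and sources $g_\tau$ are uniformly bounded on $B_{2r}$, and the kernels fall into the uniformly elliptic Schwab--Silvestre class. The barrier bound of the previous step gives $\sup_n \norm{W_n}_{L^\infty(B_{2r})} + \sup_n\norm{W_n}_{L^1(\omega_s)} < \infty$. Absorbing $c_\tau W_n + g_\tau$ into a uniformly bounded right-hand side and invoking \cite[Theorem~7.2]{Schwab_Silvestre}, I obtain a uniform interior H\"older estimate $\sup_n \norm{W_n}_{C^\alpha(B_r)} < \infty$ for some $\alpha > 0$. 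A diagonal Arzel\`a--Ascoli extraction then produces a subsequence $W_{n_k} \to w$ locally uniformly on $\rd$, with $w \in C(\rd)$ and $|w|\leq \Phi \in L^1(\omega_s)$.

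\textbf{Passage to the limit.} To conclude that $w$ solves \eqref{EL2.3A}, I will invoke the standard stability of viscosity solutions for nonlocal HJB equations (in the spirit of \cite{BI08,CS09}). Given a smooth test function $\varphi$ touching $w$ from above at a point $x_0$, a standard perturbation argument yields points $x_k \to x_0$ at which $\varphi_k$ (a small modification of $\varphi$) touches $W_{n_k}$ from above; thus $\inf_\tau(\cL_\tau v_k + c_\tau W_{n_k} + g_\tau)(x_k)\geq 0$ for the corresponding composite test function $v_k$. Locally uniform convergence handles the part of the nonlocal integral near $x_0$ and the gradient term, while the dominated convergence theorem, justified by the uniform tail bound $|W_{n_k}| \leq \Phi$, handles the far-field piece of $I_\tau$. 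The infimum over $\tau$ passes under limits by a standard continuity-in-$\tau$ argument combined with (\hyperlink{B4}{B4}). The subsolution property is shown symmetrically.

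The main obstacle, and the reason this approach works under the weak assumption set (\hyperlink{B1}{B1})--(\hyperlink{B4}{B4}), is the simultaneous role played by the Lyapunov function $\sV$: it provides the barrier needed for comparison in Step 1 \emph{and} the integrable envelope needed to move the limit inside the nonlocal operator in Step 3. Without the form of (\hyperlink{B2}{B2}) tying together $\cL_\tau \sV$, $c_\tau \sV$, and $|g_\tau|$, one of these two uses would fail.
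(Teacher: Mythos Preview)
Your proof follows the same three-step scheme as the paper (barrier via $\Phi=\sV+k_0/c_\circ$ and Lemma~\ref{L2.1}, interior H\"older compactness, viscosity stability with dominated tails), and is correct in outline. The one place where the paper is more careful is the compactness step: \cite[Theorem~7.2]{Schwab_Silvestre} is stated for globally bounded solutions, whereas $W_n$ is only dominated by the possibly unbounded $\Phi$. The paper therefore introduces a smooth cutoff $\chi$ supported in a fixed ball $B_{R'+3}$, sets $\tilde\psi_n=\chi W_n$, and rewrites the equation for $\tilde\psi_n$ in $B_{R'+2}$ with an extra source term $I_\tau[(1-\chi)W_n]$; the barrier bound $|W_n|\leq\Phi\in L^1(\omega_s)$ then makes this source uniformly bounded on $B_{R'+1}$, after which Schwab--Silvestre applies directly to the globally bounded $\tilde\psi_n$. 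Your assertion that the H\"older estimate depends on $\norm{W_n}_{L^\infty(B_{2r})}+\norm{W_n}_{L^1(\omega_s)}$ is the right intuition, and this cutoff reduction is precisely the mechanism that makes it rigorous.
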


\begin{proof}
Define $\tilde{\sV}=\frac{k_0}{c_\circ}+ \sV$ where $k_0$ is given by \eqref{Lyap-alpha}. It then follows from \eqref{Lyap-alpha} that
\begin{equation}\label{EL2.3B}
\sup_{\tau\in \mathcal{T}}\Bigl(\cL_{\tau} \tilde{\sV}+
c_{\tau} \tilde\sV + |g_\tau| \Bigr)\leq
\sup_{\tau\in \mathcal{T}} (\cL_{\tau} \sV + c_{\tau} \sV) + h
-k_0\leq 0\quad \text{in}\; \rd.
\end{equation}
Let $W_{n}$ be a viscosity solution to \eqref{ET2.2A} with
$\Omega=B_n$, that is,
\begin{equation}\label{EL2.3C}
\inf_{\tau \in\mathcal{T}} \Bigl(\cL_{\tau} W_n + c_{\tau} W_n + g_{\tau}\Bigr) =0\quad \text{in}\; B_n, \quad
\text{and}\quad W_n=0\quad \text{in}\; B_n^c.
\end{equation}
Applying Lemma~\ref{L2.1} on \eqref{EL2.3B} and \eqref{EL2.3C}, we see that 
$|W_{n}|\leq \tilde{\sV}$ in $\rd$, for all $n$. 

Next we show that for any compact set $\Kset$, $\{W_{n}\}_{n\geq n_0}$ is equicontinuous on $\Kset$, where $\Kset\Subset B_{n_0-4}$. Let $R^\prime$ be such that $\Kset\subset B_{R'}$. Without any loss of generality, we may assume that $R'+4 <n_0$.
Consider a smooth cut-off function $0\leq \chi \leq 1$ such that $\chi=1$ in the ball $B_{R^\prime+2}$  and $\chi=0$ outside the $B_{R^\prime +3}$. Let $\tilde{\psi}_{n}=\chi W_{n}$ in $\mathbb{R}^d$. From \eqref{EL2.3C} we see that
\begin{equation}\label{EL2.3D}
\inf_{\tau \in\mathcal{T}} \Bigl(\cL_{\tau}\tilde\psi_n + c_{\tau}\tilde\psi_n + g_{\tau} + I_{\tau}[(1-\chi)W_n]\Bigr) =0\quad \text{in}\; B_{R'+2},
\end{equation}
for all $n\geq n_0$. We claim that
\begin{equation}\label{EL2.3E}
\sup_{B_{R'+1}}\, \sup_{\tau\in \mathcal{T}} |I_{\tau}[(1-\chi)W_n]|
\leq C.
\end{equation}
Note that $|x+y|\geq R'+2$ and $|x|\leq R'+1$ imply that 
$|y|\geq 1$. Thus, for any $x\in B_{R'+1}$
\begin{align*}
\sup_{\tau\in \mathcal{T}} |I_{\tau}[(1-\chi)W_n]|
&\leq (2-2s)\int_{\rd} |\delta((1-\chi)W_n, x, y)|\frac{\Lambda}{|y|^{d+2s}} \dy
\\
&\leq 2 (2-2s) \int_{\abs{y}\geq 1} |W_n(x+y)| \frac{\Lambda}{|y|^{d+2s}} \dy
\\
&\leq 2 (2-2s) \int_{\abs{y}\geq 1} \tilde{\sV}(x+y) \frac{\Lambda}{|y|^{d+2s}} \dy
\\
&\leq C_{R'} \int_{\rd} \tilde{\sV}(x+y) \frac{\Lambda}{1+|x+y|^{d+2s}} \dy \leq C.
\end{align*}
This establishes \eqref{EL2.3E}. Also, 
$$
\sup_{\tau\in \mathcal{T}}\sup_{B_{R'+1}}|c_{\tau} W_n|\leq 
C_{R'}(1+ \sup_{B_{R'+1}}\sV).$$
Thus, applying \cite[Theorem~7.2]{Schwab_Silvestre} on \eqref{EL2.3D} , we obtain that for some $\alpha>0$
\begin{equation*}
\sup_{n\geq n_0}\, \norm{W_n}_{C^\alpha(B_{R'})}
= \sup_{n\geq n_0}\, \norm{\tilde\psi_n}_{C^\alpha(B_{R'})}\leq C.
\end{equation*}
Hence, $\{W_{n}\}_{n\geq n_0}$ is equicontinuous on $\Kset$. Applying Arzel\`{a}-Ascoli theorem and a standard diagonalization 
argument we have $W_{n_k}\to w\in C(\rd)$, uniformly on
compact sets, along some subsequence $n_k\to\infty$. Since
$|W_{n_k}|\leq \tilde{\sV}$ for all $n_k$, we also get
\begin{equation}\label{AB001}
|w|\leq \tilde{\sV}\quad
\text{and}\quad \lim_{n_k\to\infty} \norm{W_{n_k}-w}_{L^1(\omega_s)}=0.
\end{equation}
Finally, applying the stability property of viscosity solutions
\cite[Lemma~5]{CS11b}, we see that $w$ solves \eqref{EL2.3A}.
In fact, the stability property can be seen as follows: Suppose $\varphi\in C^2(B_{2\kappa}(x))$ touches  $w$ (strictly) from above at $x$ in $B_{2\kappa}(x)$, for some $\kappa>0$. From the 
local uniform convergence above,
we can find a sequence $(d_{n_k}, x_{n_k})\in \R\times B_{\kappa}(x)$ such that $x_{n_k}\to x, d_{n_k}\to 0$ and $\varphi+d_k$ would touch $W_{n_k}$ from above at the point $x_k$. Then letting
\[
v_{n_k}(y):=\left\{
\begin{array}{ll}
\varphi(y) + d_k & \text{for}\; y\in B_\kappa(x),
\\[2mm]
W_{n_k}(y) & \text{otherwise},
\end{array}
\right. 
\]
we obtain from \eqref{EL2.3C} and for large $n_k$ that
$$\inf_{\tau \in\mathcal{T}} \Bigl(\cL_{\tau} v_{n_k}(x_{n_k}) + c_{\tau} v_{n_k}(x_{n_k}) + g_{\tau}(x_{n_k})\Bigr) \geq 0.$$
Using Assumption~\ref{A2.1}(B4) and \eqref{AB001}, we can let $n_k\to\infty$ to get that
$$\inf_{\tau \in\mathcal{T}} \Bigl(\cL_{\tau} v(x) + c_{\tau} v(x) + g_{\tau}(x)\Bigr) \geq 0,$$
where $v$ is defined in the same fashion replacing $W_{n_k}$ by $w$ and $d_k$ by $0$. Similarly, we can also check that $w$ is a supersolution.
This completes the proof.
\end{proof}

Now we show that the solution $w$ obtained in Lemma~\ref{L2.3} belongs to the class $\sorder(\sV)\cap C^{2s+}_{\rm loc}(\rd)$.
\begin{lem}\label{L2.6}
Let $w$ be the solution to \eqref{EL2.3A} obtained in Lemma~\ref{L2.3}. It holds that $w\in\sorder(\sV)$. In addition, if (\hyperlink{B3}{B3})-(\hyperlink{B5}{B5}) holds, then any solution $\upsilon$ of \eqref{EL2.3A} satisfying 
$\upsilon\in\mathcal{O}(\sV)$
is in
$C^{2s+}_{\rm loc}(\rd)$. In particular, $w$ is a classical solution to \eqref{EL2.3A}.
\end{lem}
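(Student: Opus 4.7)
The plan is to separate the two claims: $w\in\sorder(\sV)$ follows from a Lyapunov--barrier comparison applied to the bounded-domain approximants $W_n$, while the $C^{2s+}_{\rm loc}$ regularity of any $\upsilon\in\mathcal{O}(\sV)$ proceeds by localising via a cut-off and then bootstrapping through the H\"older, $C^{1,\gamma}$, and $C^{2s+}$ estimates that are already available.

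For $w\in\sorder(\sV)$, fix $\epsilon\in(0,1)$. By \eqref{Lyap-alpha}, for every $\tau$ we have $\cL_\tau(\epsilon\sV)+c_\tau(\epsilon\sV)\leq\epsilon k_0\mathbf{1}_\sK-\epsilon h$, and by \eqref{EA2.1A} there is $R_\epsilon>0$ with $\sup_\tau|g_\tau|\leq\epsilon h$ on $B_{R_\epsilon}^c$; enlarging $R_\epsilon$ we may assume $\sK\subset B_{R_\epsilon}$. Put $\phi_\epsilon:=\epsilon\sV+M_\epsilon$ with $M_\epsilon$ large enough that $\phi_\epsilon\geq\tilde\sV:=k_0/c_\circ+\sV$ on $\bar B_{R_\epsilon}$. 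Using $c_\tau\leq-c_\circ<0$, one verifies directly that $\phi_\epsilon$ is a classical supersolution and $-\phi_\epsilon$ a classical subsolution of \eqref{EL2.3A} on $B_{R_\epsilon}^c$. Since $|W_n|\leq\tilde\sV$ by Lemma~\ref{L2.3} and $W_n=0$ on $B_n^c$, Lemma~\ref{L2.1} applied on the bounded annulus $B_n\setminus\bar B_{R_\epsilon}$ (for $n>R_\epsilon$) produces $|W_n|\leq\phi_\epsilon$ throughout $\rd$. Passing to the limit along the subsequence of Lemma~\ref{L2.3} yields $|w|\leq\epsilon\sV+M_\epsilon$, and letting $\epsilon\downarrow0$ concludes $\limsup_{|x|\to\infty}|w(x)|/\sV(x)=0$.

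For the regularity assertion, fix $\upsilon\in\mathcal{O}(\sV)$ solving \eqref{EL2.3A}, $x_0\in\rd$ and $R>0$. Localise via a smooth cut-off $\chi$ with $\chi\equiv1$ on $B_{2R}(x_0)$ and $\chi\equiv0$ outside $B_{3R}(x_0)$, and set $\tilde\upsilon:=\chi\upsilon\in L^\infty(\rd)$. Then $\tilde\upsilon$ satisfies
\begin{equation*}
\inf_{\tau\in\mathcal{T}}\bigl(\cL_\tau\tilde\upsilon+c_\tau\tilde\upsilon+g_\tau+I_\tau[(1-\chi)\upsilon]\bigr)=0\quad\text{in }B_R(x_0),
\end{equation*}
and the tail $I_\tau[(1-\chi)\upsilon]$ is bounded on $B_R(x_0)$, uniformly in $\tau$, via $|\upsilon|\leq C(1+\sV)$, the submultiplicativity in \eqref{EA2.1C}, and $\sV\in L^1(\omega_s)$; it is continuous in $x$ by (\hyperlink{B4}{B4}). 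Treating the zeroth-order and tail terms as a bounded, uniformly continuous inhomogeneity, \cite[Theorem~7.2]{Schwab_Silvestre} first gives $\upsilon\in C^\alpha_{\rm loc}(\rd)$ for some $\alpha>0$, after which Theorem~\ref{T-reg} yields $\upsilon\in C^{1,\gamma}_{\rm loc}(\rd)$ for any $\gamma\in(0,2s-1)$.

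To conclude the $C^{2s+}$ regularity, rewrite the equation on a slightly smaller ball as $\inf_\tau(I_\tau\tilde\upsilon+\hat g_\tau)=0$ with $\hat g_\tau:=b_\tau\cdot\nabla\tilde\upsilon+c_\tau\tilde\upsilon+g_\tau+I_\tau[(1-\chi)\upsilon]$. The $C^{1,\gamma}$ regularity of $\upsilon$ together with (\hyperlink{B5}{B5}) renders $b_\tau\cdot\nabla\tilde\upsilon+c_\tau\tilde\upsilon+g_\tau$ locally H\"older uniformly in $\tau$. The H\"older continuity of the tail in $x$ will follow from the $C^{\tilde\alpha}$-in-$x$ continuity of $k_\tau(\cdot,y)$ supplied by (\hyperlink{B5}{B5}), combined with the kernel's decay and the integrability granted by $\sV^{1+\upmu}\in L^1(\omega_s)$ through \eqref{EA2.1B}--\eqref{EA2.1C}. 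With $\hat g_\tau$ H\"older uniformly in $\tau$, \cite[Theorem~1.3]{Serra2015} upgrades $\tilde\upsilon$ to $C^{2s+\upkappa}$ on a smaller ball, whence $\upsilon\in C^{2s+}_{\rm loc}(\rd)$ and $\upsilon$ is a classical solution. I expect the most delicate step to be the H\"older estimate for the nonlocal tail, where the growth of $\upsilon$ through $\sV$ has to be balanced against the decay of the kernel; this is precisely where the integrability and growth hypotheses in (\hyperlink{B3}{B3}) enter.
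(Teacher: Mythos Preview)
Your argument for $w\in\sorder(\sV)$ is essentially the paper's: you use $\epsilon\sV$ plus a constant as a barrier on the exterior of a large ball and compare with the $W_n$ via Lemma~\ref{L2.1}. This part is fine.

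The regularity half has a genuine gap. Your localisation gives $\upsilon\in C^{1,\gamma}_{\rm loc}$, but to show that the tail $I_\tau[(1-\chi)\upsilon]$ is H\"older in $x$ you must control
\[
\int_{|y|\gtrsim R}\bigl|\upsilon(x+y)-\upsilon(x'+y)\bigr|\,\frac{dy}{|y|^{d+2s}}
\]
for $x,x'$ near $x_0$. This forces a \emph{global} gradient bound $|\nabla\upsilon(z)|\le C(1+\sV(z))^{1+\upmu}$, so that the integrand is dominated by $C|x-x'|(1+\sV(\cdot))^{1+\upmu}\in L^1(\omega_s)$. A purely local application of Theorem~\ref{T-reg} does not give this: the constant there depends on $C_0=\sup_\tau\|b_\tau\|_{L^\infty(B_1)}$ in an unquantified way (the proof is by compactness), and under (\hyperlink{B3}{B3}) the drift may grow like $(1+\sV)^{(2s-1)\upmu}$, so the local $C^{1,\gamma}$ constants blow up with $|x_0|$ in an uncontrolled fashion.

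The paper closes this gap with a rescaling you did not perform: set $r=[1+\sV(x_0)]^{-\upmu}$ and $v(x)=\upsilon(x_0+rx)$. Then \eqref{EA2.1B} makes $r^{2s-1}b_\tau(x_0+r\cdot)$, $r^{2s}c_\tau(x_0+r\cdot)\upsilon$ and $r^{2s}g_\tau(x_0+r\cdot)$ bounded \emph{uniformly in $x_0$}, and \eqref{EA2.1C} keeps the rescaled tail bounded by $C(1+\sV(x_0))$. Theorem~\ref{T-reg} now yields $|\nabla v(0)|\le C(1+\sV(x_0))$, which unscales to $|\nabla\upsilon(x_0)|\le C(1+\sV(x_0))^{1+\upmu}$. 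With this quantitative growth in hand, the H\"older estimate for the tail follows exactly as you sketched, and Serra's $C^{2s+}$ result applies. Without the rescaling step, the appeal to $\sV^{1+\upmu}\in L^1(\omega_s)$ is not justified.
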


\begin{proof}
First we show that the solution $w$ obtained in Lemma~\ref{L2.3} is in $\sorder(\sV)$. Recall that $w=\lim_{n_k\to\infty} W_{n_k}$ where $W_{n_k}$ solves 
\eqref{ET2.2A} in the ball $B_{n_k}$. Fix $\varepsilon>0$.
By \eqref{EA2.1A} we have 
$$
(\varepsilon h(x)- \sup_{\tau \in\mathcal{T}}|g_{\tau}|(x))\geq 0
$$
for all $|x|$ large. 
Thus we can find a compact set 
$\Kset_\varepsilon\Supset \sK$ so that
\begin{equation}\label{ET2.4B}
\begin{split}
(\varepsilon h- \sup_{\tau \in\mathcal{T}}|g_{\tau}|(x)) & \geq 0 
\quad \text{in} \; \Kset^c_\varepsilon,
\\
\sup_{\tau\in \mathcal{T}}(\cL_{\tau} \varphi_\varepsilon
+ c_{\tau} \varphi_\varepsilon + |g_{\tau}|)
\leq \sup_{\tau\in \mathcal{T}}(\cL_{\tau} \varphi_\varepsilon 
+ c_{\tau} \varphi_\varepsilon + \varepsilon h) &\leq 0\quad \text{in}\; \Kset^c_\varepsilon,
\end{split}
\end{equation}
where $\varphi_\varepsilon=\varepsilon \sV$. Let 
$\kappa=\sup_{n_k}\,\max_{\Kset_\varepsilon}|W_{n_k}|<\infty$. From (\hyperlink{B1}{B1}) and
\eqref{ET2.4B} it follows that
$$\sup_{\tau\in \mathcal{T}}(\cL_{\tau} (\kappa+\varphi_\varepsilon) 
+ c_{\tau} (\kappa+\varphi_\varepsilon)+ |g_{\tau}|) \leq 0\quad \text{in}\; \Kset^c_\varepsilon.$$
Applying Lemma~\ref{L2.1} in the domain $B_{n_k}\setminus \Kset_\varepsilon$ we obtain
\begin{equation*}
|W_{n_k}|\leq \kappa + \varphi_\varepsilon= \kappa + \varepsilon \sV
\quad \text{in}\; \rd.
\end{equation*}
for all $n_k$ large. Hence, letting $n_k\to\infty$, we get
$|w|\leq \kappa + \varepsilon \sV$ in $\rd$ which in turn, implies
$$\limsup_{|x|\to\infty} \frac{1}{1+ \sV(x)} |w|\leq \varepsilon.$$
From the arbitrariness of $\varepsilon$ we have $w\in\sorder(\sV)$.

Now we prove the second part. Let $\upsilon$ solve
\begin{equation}\label{EL2.6A}
\inf_{\tau \in\mathcal{T}} \Bigl(\cL_{\tau} \upsilon + c_{\tau} \upsilon+ g_{\tau}\Bigr) =0\quad \text{in}\; \rd,
\end{equation}
and $\upsilon\in \mathcal{O}(\sV)$, that is, $|\upsilon|\leq C(1+\sV)$ in
$\rd$.
 Fix a point $x_0\in \rd$ and let
$r=[1+ \sV(x_0)]^{-\upmu}$,
where $\upmu$ is given by (\hyperlink{B3}{B3}). Define $v(x)= \upsilon(x_0+ rx)$. It then follows from \eqref{EL2.6A} that
\begin{equation}\label{ET2.4C}
\inf_{\tau \in\mathcal{T}}\left(\tilde I_{\tau}[v] + r^{2s-1} b_\tau(x_0+rx)\cdot \grad v + r^{2s} c_{\tau}(x_0+rx) v  + r^{2s} g_{\tau}(x_0+ r x) \right)
=0 \quad \text{in}\; \rd,
\end{equation}
where 
$$\tilde I_{\tau}[u](x)=\int_{\rd}\delta(u, x, y) \frac{k_{\tau}(x_0+rx, ry)}{|y|^{d+2s}}\dy.$$
Now consider a smooth cut-off function $\xi:\rd\to[0, 1]$ satisfying $\xi=1$ in $B_{\frac{3}{2}}$ and $\xi=0$ in $B^c_2$.
We can re-write \eqref{ET2.4C} as
\begin{equation}\label{ET2.4D}
\inf_{\tau \in\mathcal{T}}\left(\tilde I_{\tau}[\psi] + r^{2s-1} b_\tau(x_0+rx)\cdot \grad \psi
+ r^{2s} c_{\tau}(x_0+rx) \psi + r^{2s} g_{\tau}(x_0+ r x) + \tilde I_{\tau}[(1-\xi)v] \right)
=0 \quad \text{in}\; B_1,
\end{equation}
for $\psi=\xi v$. Since $|\upsilon|\leq C(1+\sV)$,
from \eqref{EA2.1B} we have
\begin{equation}\label{ET2.4E}
\sup_{\tau\in \mathcal{T}}\, \sup_{B_1}(r^{2s-1} |b_\tau(x_0+r\cdot)| + r^{2s} \frac{|g_{\tau}(x_0+r\cdot)|}{1+\sV(x_0)})
\leq C,\quad 
\sup_{\tau\in \mathcal{T}}\,\sup_{B_1}  r^{2s}\frac{|c_{\tau}(x_0+r\cdot)v|}{1+\sV(x_0)}\leq C,
\end{equation}
where $C$ is independent of $r$ and $x_0$.  From \eqref{EA2.1C} we also have $\norm{\psi}_{L^\infty(\rd)}\leq C (1+\sV(x_0))$. For $x\in B_1$ let us now
compute
\begin{align}\label{ET2.4F}
\sup_{\tau\in \mathcal{T}}|\tilde I_\tau[(1-\xi)v]|
&\leq 2(2-2s)\int_{\rd} |(1-\xi(x+y))v(x+y)|\frac{1}{|y|^{d+2s}}\dy
\nonumber
\\
&\leq 2(2-2s)\int_{|y|\geq 1/2} |v(x+y)|\frac{1}{|y|^{d+2s}}\dy
\nonumber
\\
&\leq C \int_{\rd} |\upsilon(x_0+ rx+ry)|\frac{1}{1+|y|^{d+2s}}\dy
\nonumber
\\
&\leq C \int_{\rd} (1 + \sV(x_0+ rx + ry))\frac{1}{1+|y|^{d+2s}}\dy
\nonumber
\\
&\leq C \int_{|y|\leq r^{-1}} \frac{(1 + \sV(x_0))}{1+|y|^{d+2s}}\dy
+ C \int_{|y|> r^{-1}} \frac{(1 + \sV(x_0+ rx + ry))}{1+|y|^{d+2s}}\dy
\nonumber
\\
&\leq C(1+\sV(x_0)) + C \int_{|z|> 1} (1 + \sV(x_0+ rx + z))
\frac{r^{2s}}{r^{d+2s}+|z|^{d+2s}}\dz
\nonumber
\\
&\leq  C(1+\sV(x_0)) + C (1+\sV(x_0))\int_{|z|> 1}\frac{1+\sV(z)}{1+|z|^{d+2s}}\dz
\nonumber
\\
&\leq C(1+\sV(x_0)),
\end{align}
where in the fifth and seventh line we use \eqref{EA2.1C}. Thus, using \eqref{ET2.4D},\eqref{ET2.4E},\eqref{ET2.4F} and Theorem~\ref{T-reg}, we obtain
$$\sup_{B_{\frac{1}{2}}}|\grad v|\leq C(1+\sV(x_0)).$$
Computing the derivative at $x=0$ gives us
\begin{equation}\label{ET2.4G}
|\grad \upsilon(x_0) |\leq C(1+ \sV(x_0))^{1+\upmu} , \quad x_0\in\rd.
\end{equation}
This gives an estimate on the growth of $|\grad \upsilon|$. Next we fix a point $x_0$ and let 
$\zeta(x)=\upsilon(x_0 +x)$. Let $\xi$ be the cut-off function
as chosen above. Then, letting $\varphi=\xi\zeta$, we obtain
\begin{equation}\label{ET2.4H}
\inf_{\tau \in\mathcal{T}}\left(\widehat I_{\tau}[\varphi] +  b_\tau(x_0+x)\cdot \grad \varphi
+ c_{\tau}(x_0+x)\varphi +  g_{\tau}(x_0+ x) + \widehat I_\tau[(1-\xi)\zeta] \right)
=0 \quad \text{in}\; \rd,
\end{equation}
where 
$$\widehat I_{\tau}[u](x)=\int_{\rd}\delta(u, x, y) \frac{k_{\tau}(x_0+x, ry)}{|y|^{d+2s}}\dy.$$
Let $x, x'\in B_1$. Using (\hyperlink{B5}{B5}) we get
\begin{align*}
&|\widehat I_{\tau}[(1-\xi)\zeta](x)-\widehat I_{\tau}[(1-\xi)\zeta](x')|
\\
&\leq \int_{|y|\geq \frac{1}{2}}|\delta((1-\xi)\zeta, x, y)|
\frac{|k_{\tau}(x_0+x, y)-k_{\tau}(x_0+x', y)|}{|y|^{d+2s}}\dy
\\
&\quad + 2(2-2s) \Lambda\int_{|y|\geq \frac{1}{2}}|(1-\xi(x+y))\zeta(x+y)-(1-\xi(x'+y))\zeta(x'+y)|
\frac{1}{|y|^{d+2s}}\dy
\\
&\leq  C |x-x'|^{\tilde\alpha} + C \int_{|y|\geq \frac{1}{2}}|(\xi(x'+y)-\xi(x+y))\zeta(x+y)|
\frac{1}{|y|^{d+2s}}\dy
\\
&\quad+ \int_{|y|\geq \frac{1}{2}}|\zeta(x+y)-\zeta(x'+y)|\frac{1}{|y|^{d+2s}}\dy
\\
&\leq C |x-x'|^{\tilde\alpha} + C |x-x'| (1+\sV(x_0)) +
C |x-x'|\int_{|y|\geq \frac{1}{2}}(1+\sV(x+y))^{1+\upmu}\frac{1}{|y|^{d+2s}}\dy
\\
&\leq C (1+\sV(x_0))^{1+\upmu}\max\{|x-x'|^{\tilde\alpha}, |x-x'|\}
\end{align*}
for all $\tau\in \mathcal{T}$, where in the third line we use \eqref{EA2.1C} and \eqref{ET2.4G}, and in the last line we use the fact $\sV^{1+\upmu}\in L^1(\omega_s)$. The above estimate gives H\"{o}lder continuity estimate of $\widehat I_{\tau}[(1-\xi)\zeta]$ uniformly in $\tau\in \mathcal{T}$. 
From \eqref{ET2.4H} and Theorem~\ref{T-reg} we first
observe that $\upsilon\in C^{1, \gamma}(B_1(x_0))$ for some 
$\gamma\in (0, 2s-1)$. Therefore, using above estimates, we can 
apply \cite[Theorem~1.3]{Serra2015} to obtain that 
$\upsilon\in C^{2s+\upkappa}(B_{1/2}(x_0))$ for some $\upkappa>0$. Hence $\upsilon\in C^{2s+}_{\rm loc}(\rd)$,
completing the proof.
\end{proof}

Now we can complete the proof of Theorem~\ref{T1.2}.

\begin{proof}[Proof of Theorem~\ref{T1.2}]
Existence and regularity follow from Lemma~\ref{L2.3}
and Lemma~\ref{L2.6}. So we only prove uniqueness.
 Let $\tilde{w}\in\sorder(\sV)$ be a viscosity solution to \eqref{EL2.3A}. From 
Lemma~\ref{L2.6} we see that 
$\tilde{w}\in C^{2s+}_{\rm loc}(\rd)$. Hence $w, \tilde{w}$ are classical solutions to \eqref{ET1.2A}. Now suppose, to the contrary, that $w\neq \tilde{w}$ and, without any loss of generality,
we let $(\tilde{w}-w)_+\gneq 0$. Letting 
$v=\tilde{w}-w$
we obtain from \eqref{ET1.2A} that
\begin{equation}\label{ET2.4L}
\sup_{\tau\in \mathcal{T}}(\cL_{\tau} v + c_{\tau} v)\geq 0\quad \text{in}\; \rd.
\end{equation}
If $\sup_{\rd} (\tilde{w}-w)_+>0$ is attained in $\rd$, say at
a point $x_0$, then we get from \eqref{ET2.4L} that
$0>-c_\circ v(x_0)\geq 0$ which is a contradiction. So we consider
the situation where 
\begin{equation}\label{EL2.4J}
\sup_{B_m} v_+\nearrow \sup_{\rd} v,
\end{equation}
as $m\to\infty$ and the sequence is strictly increasing. Define 
$\Kset=\sK\cup\{\sV\leq 1\}$ (see \eqref{Lyap-alpha}) and let 
$\kappa=\max_{\Kset}v_+$. Letting $\psi=v-\kappa$ we
have from \eqref{ET2.4L} that
\begin{equation}\label{ET2.4K}
\sup_{\tau\in \mathcal{T}}(\cL_{\tau} \psi + c_{\tau} \psi)\geq 0\quad \text{in}\; \rd.
\end{equation}
Using \eqref{EL2.4J}, we have $\psi\gneq 0$ in $\Kset^c$. Define
$$t_0=\inf\{t>0\; :\; t\sV>\psi\quad \text{in}\; \Kset^c\}.$$
It is evident that $t_0$ is finite, and since $\psi(z)>0$ for some
$z\in \Kset^c$, we also have $t_0>0$. We claim that
$t_0\sV$ must touch $\psi$ from above. Suppose that, $t_0\sV>\psi$ in
$\Kset^c$. Since $\psi\in\sorder(\sV)$, there exists a compact set
$\Kset_1\Supset\Kset$ such that $|\psi|<\frac{t_0}{2}\sV$ in $\Kset^c_1$. 
Again, on $\partial\Kset$, we have $\psi\leq 0
<t_0\leq t_0\sV$. Therefore
$(t_0\sV-\psi)>0$ in $\overline{\Kset_1\setminus\Kset}$. Thus we can find $\varepsilon>0$ small enough so that $(t_0-\varepsilon)\sV>\psi$
in $\Kset^c$ which will contradict the definition of $t_0$. Hence
$t_0\sV$ must touch $\psi$ at some point $x_0\in\Kset^c$. Since $\psi\leq 0$ in $\Kset$, we also have $t_0 \sV\geq \psi$ in $\rd$.
Applying $t_0\sV$ as a test function at $x_0$ in \eqref{ET2.4K} and using 
\eqref{Lyap-alpha} we get
$$0\leq t_0 \sup_{\tau\in \mathcal{T}} (\cL_{\tau}\sV(x_0) + c_{\tau}\sV(x_0))
\leq - t_0 h(x_0)<0,$$
which is a contradiction. Therefore $t_0$ must be zero and 
$(\tilde{w}-w)_+=0$. This gives us uniqueness, completing the proof.
\end{proof}

\begin{rem}\label{R2.1}
The proof of Theorem~\ref{T1.2} leads to the following Liouville type result. Suppose that there exist an inf-compact $C^2$ function $\sV\geq 0$ and a compact set $\sK$
satisfying
\begin{align*}
\sup_{\tau \in\mathcal{T}}
(\mathcal{L}_{\tau}\sV(x) + c_{\tau}(x) \sV(x)) < 0 \quad x\in\sK^c,
\end{align*}
and $c_\tau\leq -c_0<0$ for all $\tau\in\mathcal{T}$. Then, if $u\in\sorder(\sV)$ solves 
$$\inf_{\tau\in\mathcal{T}}(\mathcal{L}_{\tau} u(x) + c_{\tau}(x) u(x))=0\quad \text{in}\; \rd,$$
then $u\equiv 0$. The argument works for $s\in (0, 1)$ and  $u, b_\tau, c_\tau$ are only required to be continuous. This result should be compared with Meglioli \& Punzo \cite[Theorem~2.7]{MP22}. Unlike \cite{MP22},
the above equation is nonlinear and requires no additional regularity on drift. 
\end{rem}

\section{HJB for ergodic control problem}\label{Erg-HJB}
The main result of this section is the existence-uniqueness
result for ergodic HJB problem, that is, to find a
solution $(u, \lambda^*)$ satisfying
\begin{equation}\label{erg}
\inf_{\tau \in\mathcal{T}} \Bigl(\cL_{\tau} u + g_{\tau}\Bigr)-\lambda^*=0\quad \text{in}\; \rd.
\end{equation}
 Letting $c_{\tau}=-\alpha\in (-1, 0)$ in (\hyperlink{B1}{B1}), we see that $V$ in 
Assumption~\ref{assumptions} satisfies 
(\hyperlink{B2}{B2})--(\hyperlink{B3}{B3}). Thus Theorem~\ref{T1.2} is applicable under Assumption~\ref{assumptions}.
Let $w_\alpha$ be the unique
solution to 
\begin{equation}\label{E3.1}
\inf_{\tau \in\mathcal{T}} \Bigl(\cL_{\tau} w_\alpha + g_{\tau}\Bigr)-\alpha w_\alpha =0\quad \text{in}\; \rd,
\end{equation}
and $w_\alpha\in \sorder(V)\cap C^{2s+}_{\rm loc}(\rd)$. Define
$$\bar{w}_\alpha(x)=w_\alpha(x)-w_\alpha(0).$$
We claim that for some compact ball $B$ we have
\begin{equation}\label{E3.2}
|\bar{w}_\alpha(x)|\leq \max_{B}|\bar{w}_\alpha| + V(x)
\quad x\in\rd.
\end{equation}
Since $w_\alpha\in\sorder(V)$ and $V$ is inf-compact, we have 
$V-w_\alpha$ inf-compact. Thus, there exists $x_\alpha\in \rd$ 
satisfying $(V-w_\alpha)(x_\alpha)=\min_{\rd}(V-w_\alpha)$ implying
$w_\alpha\leq V + w_\alpha(x_\alpha)-V(x_\alpha)$. From \eqref{E3.1} we then have
$$0\leq \inf_{\tau \in\mathcal{T}}(\cL_{\tau} V(x_\alpha) + g_{\tau}(x_\alpha))
-\alpha w(x_\alpha)\leq k_0 - 
(h(x_\alpha)-\inf_{\tau \in\mathcal{T}}|g_{\tau}|(x_\alpha))+\alpha (V(0)-w_\alpha(0) - V(x_\alpha)),$$
by \eqref{Lyap}. Since $V$ is non-negative and 
$\alpha |w_\alpha|\leq k_0 + \alpha V$ (by Lemma~\ref{L2.3}), the
above inequality gives
$$(h(x_\alpha)-\sup_{\tau\in \mathcal{T}}|g_{\tau}|(x_\alpha))
\leq 2k_0 + 2\alpha V(0).$$
Since $\sup_{\tau\in \mathcal{T}}|g_{\tau}|\in\sorder(h)$ by (\hyperlink{A2}{A2}), there
exists a compact ball $B$, independent of $\alpha\in (0,1)$, so
that $x_\alpha\in B$. Hence
$$\bar{w}_\alpha(x)\leq \bar{w}_\alpha(x)-V(x) + V(x)\leq
 \bar{w}_\alpha(x_\alpha)-V(x_\alpha) + V(x)
 \leq \max_{B} \bar{w}_\alpha + V(x).$$
Again, since $(V+w_\alpha)$ is inf-compact, an argument similar to
above gives that 
$$\bar{w}_\alpha(x)\geq \min_{B} \bar{w}_\alpha-V,$$
for some compact ball $B$. Combining the above estimates we get \eqref{E3.2}.

\begin{lem}\label{L3.1}
It holds that $\sup_{\alpha\in(0,1)}\, \max_{B}|\bar{w}_\alpha|<\infty$.
\end{lem}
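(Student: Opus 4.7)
I would argue by contradiction. Assume there is a sequence $\alpha_n \in (0,1)$ with $M_n := \max_B |\bar w_{\alpha_n}| \to \infty$. Lemma~\ref{L2.3} gives $|w_\alpha|\le k_0/\alpha + V$, so $\max_B|\bar w_\alpha|$ stays uniformly bounded whenever $\alpha$ is bounded away from $0$; so along a subsequence we may assume $\alpha_n \to 0$. Normalize $\tilde w_n := \bar w_{\alpha_n}/M_n$, so that $\tilde w_n(0)=0$, $\max_B|\tilde w_n|=1$, and, by \eqref{E3.2}, $|\tilde w_n(x)| \le 1 + V(x)/M_n$ on $\rd$. Substituting $\bar w_{\alpha_n} = w_{\alpha_n} - w_{\alpha_n}(0)$ into \eqref{E3.1} and dividing by $M_n$ gives
\begin{equation*}
\inf_{\tau\in\mathcal{T}}\bigl(\cL_\tau \tilde w_n + g_\tau/M_n\bigr) - \alpha_n \tilde w_n - \alpha_n w_{\alpha_n}(0)/M_n = 0 \quad\text{in}\; \rd,
\end{equation*}
and since $|\alpha_n w_{\alpha_n}(0)| \le k_0 + \alpha_n V(0)$ is bounded, every inhomogeneous term vanishes locally uniformly as $n\to\infty$ (using $|g_\tau|\le h$ from (\hyperlink{A2}{A2})).

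The next step is to pass to the limit. Exactly as in the proof of Lemma~\ref{L2.3}, a cut-off $\chi$ supported on a slightly larger ball turns $\chi \tilde w_n$ into a solution of a truncated problem whose tail remainder $I_\tau[(1-\chi)\tilde w_n]$ is uniformly controlled by the pointwise bound $|\tilde w_n|\le 1+V$ together with $V\in L^1(\omega_s)$. The H\"older estimate of \cite[Theorem~7.2]{Schwab_Silvestre} then yields uniform equicontinuity of $\tilde w_n$ on compacts, so a subsequence converges locally uniformly to some $\tilde w \in C(\rd)$ with $\tilde w(0)=0$, $\max_B|\tilde w|=1$, and $|\tilde w|\le 1$ on $\rd$. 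The stability of viscosity solutions (as at the end of Lemma~\ref{L2.3}) identifies $\tilde w$ as a viscosity solution of $\inf_{\tau\in\mathcal T}\cL_\tau \tilde w = 0$ in $\rd$; since $\tilde w$ is bounded and $V$ is inf-compact, $\tilde w \in \sorder(V)$, and Lemma~\ref{L2.6} (applied with $c_\tau\equiv 0$) upgrades $\tilde w$ to a classical $C^{2s+}_{\rm loc}$ solution.

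The last step combines the Lyapunov function with a strong maximum principle. Set $\phi_\epsilon := \tilde w - \epsilon V$; since $\tilde w$ is bounded and $V \ge 0$ is inf-compact, $\phi_\epsilon$ attains its maximum at some $x_\epsilon \in \rd$. At $x_\epsilon$ we have $\nabla\tilde w(x_\epsilon)=\epsilon\nabla V(x_\epsilon)$ and $\delta(\tilde w, x_\epsilon, y) \le \epsilon\,\delta(V, x_\epsilon, y)$ for every $y$, hence $\cL_\tau \tilde w(x_\epsilon) \le \epsilon\,\cL_\tau V(x_\epsilon) \le \epsilon(k_0 - h(x_\epsilon))$ by \eqref{Lyap} for every $\tau$. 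Taking the infimum and using the equation gives $0 \le \epsilon(k_0 - h(x_\epsilon))$, i.e.\ $h(x_\epsilon)\le k_0$; inf-compactness of $h$ keeps $\{x_\epsilon\}$ in a fixed compact set, and letting $\epsilon \to 0$ along a convergent subsequence shows that $\tilde w$ attains its global maximum at some $x_\infty$. At $x_\infty$, $\nabla\tilde w(x_\infty)=0$ and $\delta(\tilde w, x_\infty, \cdot)\le 0$, so $\cL_\tau\tilde w(x_\infty)=I_\tau\tilde w(x_\infty) \le 0$ for every $\tau$; combined with $\inf_\tau\cL_\tau\tilde w(x_\infty)=0$ and the uniform ellipticity $k_\tau \ge (2-2s)\lambda$, a minimizing sequence forces $\int_{\rd} \delta(\tilde w, x_\infty, y)|y|^{-d-2s}\dy = 0$, so by continuity $\tilde w \equiv \tilde w(x_\infty)$ on $\rd$. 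Since $\tilde w(0)=0$, this forces $\tilde w\equiv 0$, contradicting $\max_B|\tilde w|=1$. The chief obstacles are (i) securing equicontinuity of the normalized sequence $\tilde w_n$ in the absence of a Harnack-type inequality for the nonlocal operator with drift, and (ii) justifying the nonlocal strong maximum principle at the extremal point via the uniform positivity of the kernels.
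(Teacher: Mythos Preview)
Your proof is correct and follows essentially the same route as the paper: argue by contradiction, normalize by $M_n=\max_B|\bar w_{\alpha_n}|$, use the barrier bound \eqref{E3.2} together with the Schwab--Silvestre estimate to extract a locally uniform limit $\tilde w$ solving $\inf_\tau\cL_\tau\tilde w=0$, upgrade to a classical solution via the argument of Lemma~\ref{L2.6}, and finish with the strong maximum principle to force $\tilde w\equiv 0$, contradicting $\max_B|\tilde w|=1$.

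The only noteworthy difference is your penalization step $\phi_\epsilon=\tilde w-\epsilon V$ to locate a global maximum of $\tilde w$. This is correct but unnecessary: you have already recorded that $|\tilde w|\le 1$ on $\rd$ and $\max_B|\tilde w|=1$, so some $z\in B$ realizes $|\tilde w(z)|=1$, and that $z$ is automatically a global extremum of $\tilde w$ (or of $-\tilde w$), exactly as the paper observes. Your detour through the Lyapunov function does buy you one thing, though: it shows $\tilde w$ is constant without splitting into the cases $\tilde w(z)=1$ and $\tilde w(z)=-1$, since once $\tilde w$ attains its maximum anywhere the strong maximum principle gives $\tilde w\equiv\tilde w(x_\infty)=\tilde w(0)=0$, regardless of the value of that maximum. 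In the paper the two cases are handled separately (and symmetrically). Also, your ``minimizing sequence'' at $x_\infty$ is not needed: since $\delta(\tilde w,x_\infty,\cdot)\le 0$ and $k_\tau\ge(2-2s)\lambda$, one has $I_\tau\tilde w(x_\infty)\le(2-2s)\lambda\int\delta(\tilde w,x_\infty,y)|y|^{-d-2s}\,dy$ for \emph{every} $\tau$, and the equation $\inf_\tau\cL_\tau\tilde w(x_\infty)=0$ then forces the integral to vanish directly.
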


\begin{proof}
Suppose, to the contrary, that 
$$\sup_{\alpha\in(0,1)}\, \max_{B}|\bar{w}_\alpha|=\infty.$$
Therefore, we can find a sequence $\alpha_n\to\alpha_0\in [0,1]$
such that 
$$\max_{B}|\bar{w}_{\alpha_n}|\to \infty\quad \text{as}\; n\to\infty.$$
Since $|w_\alpha|\leq \frac{k_0}{\alpha} + V$, we must have $\alpha_0=0$. Denote by $\rho_n=\max_{B}|\bar{w}_{\alpha_n}|$.
From \eqref{E3.2} we observe that
 \begin{align}\label{EL3.1A}
 |\bar{w}_{\alpha_n}(x)|\leq  \rho_n + V(x)  \quad\text{in}\; \rd.
 \end{align}
Letting $\psi_n=\frac{1}{\rho_n} \bar{w}_{\alpha_n}$, we obtain
from \eqref{E3.1} that
 \begin{align}\label{EL3.1B}
 \inf_{\tau \in\mathcal{T}}[\mathcal{L}_{\tau} {\psi}_n + \rho^{-1}_n
 g_{\tau}]-\alpha_n \psi_n -\frac{\alpha_n}{\rho_n} w_{\alpha_n}(0)=0
 \quad \text{in}\; \rd.
 \end{align}
Following the arguments of Lemma~\ref{L2.3} and using \eqref{EL3.1A}, it can be easily seen that
for every compact $\Kset$, $\{\psi_n\}_{n\geq 1}$
is in $C^{\upkappa}(\Kset)$ for some $\upkappa>0$, uniformly in $n$. Hence there exists a $\psi\in C(\rd)$ such that
$$\psi_n\to \psi\quad \text{uniformly on compacts},$$
along some subsequence. From \eqref{EL3.1A} we also have
$|\psi|\leq 1$ and $\max_{B}|\psi|=1$. Using the stability property
of viscosity solution in \eqref{EL3.1B}, we obtain
\begin{equation}\label{EL3.1C}
\inf_{\tau \in\mathcal{T}}\cL_{\tau}\psi=0\quad \text{in}\; \rd.
\end{equation}
The argument of Lemma~\ref{L2.6} gives us 
$\psi\in C^{2s+\upkappa}_{\rm loc}(\rd)$. Hence $\psi$ is a classical solution to \eqref{EL3.1C}.
Since $|\psi|$ attends its maximum in $B$, there exists $z\in B$
satisfying $|\psi(z)|=1$. Suppose that $\psi(z)=1$. In view of
\eqref{EL3.1C}, this implies $\psi\equiv 1$ which contradicts 
the fact $\psi(0)=0$. We arrive at a similar contradiction when
$\psi(z)=-1$. This completes the proof.
\end{proof}

Next we prove the existence of a solution to the ergodic control problem.

\begin{thm}\label{T3.2}
Suppose that (\hyperlink{A1}{A1})--(\hyperlink{A4}{A4}) hold. Then for some 
sequence $\alpha_n\to 0$ we have
$$\lim_{n\to\infty} \bar{w}_{\alpha_n}=u,\quad 
\alpha_n w_{\alpha_n}(0)=\lambda^*$$
for some $(u, \lambda^*)\in C(\rd)\times\mathbb{R}$, and 
\begin{equation}\label{ET3.2A}
\inf_{\tau \in\mathcal{T}}(\cL_{\tau} u + g_{\tau}) -\lambda^*=0\quad \text{in}\; \rd.
\end{equation}
Moreover, if (\hyperlink{A5}{A5}) holds, then
$u\in \sorder(V)\cap C^{2s+}_{\rm loc}(\rd)$.
\end{thm}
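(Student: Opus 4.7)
\textbf{Proof proposal for Theorem~\ref{T3.2}.}
The plan is to implement the vanishing-discount scheme sketched in the introduction: solve the $\alpha$-discounted problem via Theorem~\ref{T1.2}, normalize $\bar w_\alpha:=w_\alpha-w_\alpha(0)$, extract a convergent subsequence as $\alpha\to 0$, and identify $(u,\lambda^*)$ as the limit. The a priori estimates already at hand---the uniform bound $|\bar w_\alpha|\leq C+V$ from Lemma~\ref{L3.1} and \eqref{E3.2}, and $|\alpha w_\alpha(0)|\leq k_0+V(0)$ from Lemma~\ref{L2.3}---furnish uniform $L^1(\omega_s)$ control since $V\in L^1(\omega_s)$ by (A1).

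For equicontinuity on a compact set $K\Subset B_{R'}$ I follow the cut-off procedure of Lemma~\ref{L2.3}: with $\chi=1$ on $B_{R'+2}$ vanishing outside $B_{R'+3}$, the function $\chi\bar w_\alpha$ satisfies
\[
\inf_{\tau\in\mathcal{T}}\Bigl(\cL_\tau(\chi\bar w_\alpha)+g_\tau+I_\tau[(1-\chi)\bar w_\alpha]\Bigr)-\alpha\chi\bar w_\alpha=\alpha w_\alpha(0)\quad\text{in}\;B_{R'+2}.
\]
The uniform domination $|\bar w_\alpha|\leq C+V$ together with $V\in L^1(\omega_s)$ renders the tail $I_\tau[(1-\chi)\bar w_\alpha]$ uniformly bounded on $B_{R'+1}$ by the estimate \eqref{EL2.3E}, while the zero-order term is trivially uniformly bounded. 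Then \cite[Theorem~7.2]{Schwab_Silvestre} yields $\sup_\alpha\|\bar w_\alpha\|_{C^\kappa(K)}<\infty$. Arzel\`a-Ascoli, diagonalisation, and a further extraction produce $\alpha_n\to 0$ with $\bar w_{\alpha_n}\to u\in C(\rd)$ uniformly on compacts and $\alpha_n w_{\alpha_n}(0)\to\lambda^*\in\R$; viscosity stability (with tail convergence provided by the $C+V$ domination, as in the closing paragraph of the proof of Lemma~\ref{L2.3}) transfers the equation to $(u,\lambda^*)$, giving \eqref{ET3.2A}.

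For the $\sorder(V)$ decay I mimic the derivation of \eqref{E3.2} at multiplier $\varepsilon$ rather than $1$. Fix $\varepsilon>0$. Lemma~\ref{L2.6} gives $w_\alpha\in\sorder(V)$ for each individual $\alpha$, so $\varepsilon V-w_\alpha$ is inf-compact and attains its minimum at some $x_\alpha^\varepsilon$; then $\varphi:=\varepsilon V+\kappa_\alpha^\varepsilon$, with $\kappa_\alpha^\varepsilon:=-\min(\varepsilon V-w_\alpha)$, touches $w_\alpha$ from above at $x_\alpha^\varepsilon$. Feeding $\varphi$ into the viscosity subsolution property of $w_\alpha$ and invoking (A1) yields
\[
\alpha w_\alpha(x_\alpha^\varepsilon)\leq \varepsilon\bigl(k_0-h(x_\alpha^\varepsilon)\bigr)+\sup_{\tau}|g_\tau|(x_\alpha^\varepsilon).
\]
Combining with the minimality bound $w_\alpha(x_\alpha^\varepsilon)\geq\varepsilon V(x_\alpha^\varepsilon)+w_\alpha(0)-\varepsilon V(0)$, the uniform control of $|\alpha w_\alpha(0)|$, and $\sup_\tau|g_\tau|\leq(\varepsilon/2)h$ at infinity from (A2), I obtain $h(x_\alpha^\varepsilon)\leq C_\varepsilon$ with $C_\varepsilon$ independent of $\alpha$. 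Inf-compactness of $h$ then confines $x_\alpha^\varepsilon$ to a compact $B_\varepsilon$ uniform in $\alpha$, so that $\bar w_\alpha(x)\leq\varepsilon V(x)+C_\varepsilon'$ uniformly in $\alpha$; the symmetric argument applied to $\varepsilon V+w_\alpha$ handles the lower bound. Passing to the limit yields $|u|\leq \varepsilon V+C_\varepsilon'$ for every $\varepsilon>0$, whence $u\in\sorder(V)$.

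Finally, the $C^{2s+}_{\rm loc}$ regularity follows by repeating the second half of the proof of Lemma~\ref{L2.6} on \eqref{ET3.2A}: scale around an arbitrary $x_0$ by $r=(1+V(x_0))^{-\upmu}$, apply Theorem~\ref{T-reg} for a $C^{1,\gamma}$ bound, and invoke \cite[Theorem~1.3]{Serra2015}---valid because the Isaacs operator in \eqref{ET3.2A} is of concave type---to upgrade to $C^{2s+\upkappa}(B_{1/2}(x_0))$. The principal difficulty I anticipate is the uniform $\sorder(V)$ estimate: the \emph{qualitative} $\sorder(V)$ property of each individual $w_\alpha$ is used only to produce the interior minimiser $x_\alpha^\varepsilon$, while the \emph{quantitative}, $\alpha$-uniform control of that minimiser must come entirely from the Foster-Lyapunov bound \eqref{Lyap} together with $|g_\tau|\in\sorder(h)$.
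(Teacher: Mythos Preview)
Your proposal is correct and follows essentially the same route as the paper: equicontinuity via the cut-off and Schwab--Silvestre estimate of Lemma~\ref{L2.3}, compactness and stability to pass to \eqref{ET3.2A}, the $\varepsilon V$-variant of the argument preceding \eqref{E3.2} to trap the touching point $x_\alpha^\varepsilon$ in an $\alpha$-independent compact set (yielding $u\in\sorder(V)$), and then Lemma~\ref{L2.6} for the $C^{2s+}_{\rm loc}$ upgrade. The only cosmetic differences are the order in which you treat $\sorder(V)$ versus $C^{2s+}$ and the explicit display of the inequality at $x_\alpha^\varepsilon$, which the paper leaves implicit by saying ``the proof of \eqref{E3.2} works with $V$ replaced by $\varepsilon V$.''
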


\begin{proof}
From \eqref{E3.1} we note that
\begin{equation}\label{ET3.2B}
\inf_{\tau \in\mathcal{T}} \Bigl(\cL_{\tau} \bar{w}_\alpha + g_{\tau}\Bigr)-\alpha \bar{w}_\alpha - \alpha w_\alpha(0)=0\quad \text{in}\; \rd.
\end{equation}
From Lemma~\ref{L3.1}, \eqref{E3.2} and the proof of 
Lemma~\ref{L2.3}, we see that the family $\{\bar{w}_\alpha\, :\, \alpha\in (0, 1)\}$ is locally equicontinuous. Also note that
$\alpha|w_\alpha(0)|\leq \kappa + \alpha V(0)$. Hence we can find
a sequence $\alpha_n\to 0$ such that 
\begin{align*}
\lim_{n\to\infty}\alpha_n w_{\alpha_n}(0)&=\lambda^*
\\
\lim_{n\to\infty}\bar{w}_{\alpha_n} & = u \quad \text{uniformly over compacts}.
\end{align*}
Using stability property of the viscosity solutions and passing the
limit in \eqref{ET3.2B} we obtain
\begin{equation}\label{ET3.2C}
\inf_{\tau \in\mathcal{T}}(\cL_{\tau} u + g_{\tau}) -\lambda^*=0\quad \text{in}\; \rd.
\end{equation}
This gives us \eqref{ET3.2A}. From \eqref{E3.2} we also have
$|u|\leq C+ V$ in $\rd$. Thus we can follow the proof of 
Lemma~\ref{L2.6} to conclude that $u\in 
C^{2s+}_{\rm loc}(\rd)$.

Thus we remain to show that $u\in\sorder(V)$. Fix an $\varepsilon>0$. Recall from Lemma~\ref{L2.6} that 
$w_{\alpha_n}\in\sorder(V)$. Thus, the
functions $\varepsilon V-w_{\alpha}$ and 
$\varepsilon V+w_{\alpha}$ are inf-compact. Since 
$\sup_{\tau\in \mathcal{T}}|g_{\tau}|\in\sorder(h)$, we
also have 
$\varepsilon h - \sup_{\tau\in \mathcal{T}}|g_{\tau}|$ inf-compact.
Thus the proof of \eqref{E3.2} works with $V$ replaced by $\varepsilon V$ and we obtain a compact $B(\varepsilon)$ satisfying
$$|\bar{w}_{\alpha_n}(x)|\leq \max_{B(\varepsilon)}|\bar{w}_{\alpha_n}| + \varepsilon V(x) \quad x\in\rd.$$
Letting $\alpha_n\to 0$ in the above equation we obtain
$$|u(x)|\leq \max_{B(\varepsilon)}|u| + \varepsilon V(x) \quad x\in\rd,$$
which in turn, gives
$$\limsup_{|x|\to\infty} \frac{|u(x)|}{1+ V(x)}\leq \varepsilon.$$
From the arbitrariness of $\varepsilon$ it follows that
$u\in\sorder(V)$, completing the proof.
\end{proof}

Now we complete the proof of Theorem~\ref{T1.1}.

 \begin{proof}[Proof of Theorem~\ref{T1.1}]
 Existence of $(u, \lambda)$ follows from Theorem~\ref{T3.2}.
So we only prove uniqueness. Suppose that 
$(\hat{u}, \hat{\rho})\in\sorder(V)\times\mathbb{R}$ is a solution
to \eqref{ET1.1A}. The proof of Lemma~\ref{L2.6} gives us that
$u, \hat{u}\in C^{2s+}_{\rm loc}(\rd)$, and therefore, 
both $u, \hat{u}$ are classical solutions.

Now suppose, to the contrary, that $\lambda^*\neq \hat{\rho}$.
Assume, without loss of generality, that $\hat{\rho}< \lambda^*$ and define $\hat{u}_\epsilon:= \hat{u}+\epsilon V$ for $\epsilon>0$. 
Since $\hat{u}$ and $V$ are classical solution to \eqref{ET1.1A} and
\eqref{Lyap}, respectively,
 we obtain
 \begin{align*}
&\inf_{\tau \in\mathcal{T}} [\mathcal{L}_{\tau} \hat{u}_\epsilon + g_{\tau}(x)]
\leq \inf_{\tau \in\mathcal{T}}(\mathcal{L}_{\tau} \hat{u} +g_{\tau}(x)) + \sup_{\tau\in \mathcal{T}} \mathcal{L}_{\tau}(\epsilon V)
\leq \hat{\rho} + \epsilon (k_0-h)\leq \hat{\rho} + \epsilon k_0.
 \end{align*}
We choose $\epsilon$ small enough such that $\hat{\rho} + \epsilon k_0 < \lambda^*$ which in turn, gives
\begin{equation}\label{ET3.3B}
\inf_{\tau \in\mathcal{T}} (\mathcal{L}_{\tau} \hat{u}_\epsilon + g_{\tau}(x))  < \lambda^*.
\end{equation}
Define $\phi_\epsilon= \epsilon V + \hat{u}-u$ and
since $u, \hat{u}\in\sorder(V)$, observe that $\phi_\epsilon(x)\longrightarrow +\infty $ as $|x|\longrightarrow +\infty$. In other words $\phi_\epsilon $ is inf-compact, and so it attains minimum at a point in $\rd$, say $y_\epsilon$.
Denote by $\kappa_\epsilon= (\epsilon V + \hat{u}-u)(y_\epsilon)$ and note that $u+\kappa_\epsilon\leq \hat{u}_\epsilon$. 
 From 
\eqref{ET1.1A} and \eqref{ET3.3B} we then obtain
 \begin{align*}
\lambda^*=\inf_{\tau \in\mathcal{T}}
 (\mathcal{L}_{\tau} u(y_\epsilon) + g_{\tau}(y_\epsilon)) =
\inf_{\tau \in\mathcal{T}}
 (\mathcal{L}_{\tau} (u+\kappa_\epsilon)(y_\epsilon) + g_{\tau}(y_\epsilon)) \leq \inf_{\tau \in\mathcal{T}}
 (\mathcal{L}_{\tau} \hat{u}_\epsilon (y_\epsilon) + g_{\tau}(y_\epsilon))<\lambda^*.
 \end{align*}
But this is a contradiction. Hence we must have $\hat{\rho}=\lambda^*$.

Next we show that $u=\hat{u}$. 
Denote by $v=\hat{u}-u$. Since $\hat{u}, u$ are classical solutions and $\hat{\rho}=\lambda^*$ we have
\begin{equation}\label{ET3.3C}
\sup_{\tau\in \mathcal{T}}\cL_{\tau} v \geq 0\quad \text{in}\; \rd.
\end{equation}
Let $\Kset=\{h\leq k_0\}\cup\{V\leq 1\}$. We claim that
\begin{equation}\label{ET3.3D}
\sup_{\rd} v = \max_{\Kset} v.
\end{equation}
Denote by $\hat{\kappa}=\max_{\Kset} v$ and $\hat{v}=v-\hat{\kappa}$. It is evident from \eqref{ET3.3C} that
$$\sup_{\tau\in \mathcal{T}}\cL_{\tau} \hat{v} \geq 0\quad \text{in}\; \rd.$$
Let
$$t_0=\inf\{t>0\; :\; \hat{v}< t V\quad \text{in}\; \Kset^c\}.$$
If $t_0=0$ we get the claim \eqref{ET3.3D}. For $t_0>0$,
the argument in Theorem~\ref{T1.2} gives that $t_0V\geq \hat{v}$ in $\rd$ and for some point $z\in\Kset^c$,
$t_0 V(z)=\hat{v}(z)$. Then 
$$0\leq \sup_{\tau\in \mathcal{T}}\cL_{\tau} \hat{v}(z)
\leq t_0 \sup_{\tau\in \mathcal{T}}\cL_{\tau} V(z)
\leq t_0 (k_0-h(z)) <0.$$
This is a contradiction. Hence we must have $t_0=0$, establishing \eqref{ET3.3D}. Let $\bar{z}\in\Kset$ be such that $\sup_{\rd} v= v(\bar{z})$. Computing \eqref{ET3.3C} at the point $z$ gives us
$$
0\leq\sup_{\tau\in \mathcal{T}}\cL_{\tau} v(\bar{z})
\leq \lambda \int_{\rd}\delta(v, \bar{z}, y) \frac{1}{|y|^{d+2s}}\dy\leq 0.
$$
Hence $v={\rm constant}$ which gives us $u=\hat{u}$ since
$u(0)=0=\hat{u}$. This completes the proof.
\end{proof}

\section{HJB equation with mixed local-nonlocal operators}\label{S-mixed}

In this section we generalize the results of Section~\ref{Erg-HJB} to a more general class of integro-differential operator involving both local and nonlocal terms. More precisely, we consider the following operator
$$\cI u(x) =\inf_{\tau\in\mathcal{T}}\left[
{\rm tr}{(a_\tau(x)D^2u)} + \breve I_\tau [u](x) + b_\tau(x)\cdot\grad u + g_\tau \right],$$
where $\mathcal{T}$ is an indexing set, $a_\tau$ is positive
definite matrix and
$$ \breve{I}_{\tau} [u](x)=\int_{\rd} (u(x+y)-u(x)-
\grad u(x)\cdot y 1_{B_1}(y)) K_\tau(x, y) \dy.$$
\begin{assumption}\label{A4.1}
We impose the following standard assumptions on the coefficients.
\begin{itemize}
\item[(i)] $\{a_\tau\}_{\tau\in\mathcal{T}}$ is locally bounded, and 
$$ \lambda |\xi|^2\leq \xi a_\tau(x) \cdot\xi\leq \Lambda |\xi|^2\quad \text{for all}\; x, \xi\in\rd,$$
for some $0<\lambda\leq \Lambda$.
\item[(ii)] There exists a measurable $K:\rd\setminus\{0\}
\to (0, \infty)$, locally bounded in $\rd\setminus\{0\}$,
 such that
$$0\leq K_\tau(x, y)\leq K(y) \quad \text{for all}\; x\in\rd, 
y\in \rd\setminus\{0\},\, \tau\in\mathcal{T},$$
and
$$\int_{\rd} \min\{|y|^2, 1\} K(y)\, \dy<\infty.$$
\item[(iii)] For every compact set $\Kset$, there exists
$\tilde\alpha\in (0, 1)$ such that
$$\sup_{\tau\in\mathcal{T}}\norm{a_\tau(x)-a_\tau(x')}
+ \sup_{\tau\in\mathcal{T}}\abs{b_\tau(x)-b_\tau(x')}
+ \sup_{\tau\in\mathcal{T}}\abs{g_\tau(x)-g_\tau(x')}
\leq C_{\Kset}|x-x'|^{\tilde\alpha},$$
and
$$ \sup_{\tau\in \mathcal{T}}|K_\tau(x, y)-K_\tau(x', y)|\leq
C_{\Kset} |x-x'|^{\tilde\alpha} K(y)\quad y\in\rd\setminus\{0\},$$
for all $x, x'\in\Kset$.
\end{itemize}
\end{assumption}
Note that Assumption~\ref{A4.1}(i) is the non-degeneracy condition and Assumption~\ref{A4.1}(iii) is normally used 
to establish $C^{2, \upkappa}$ regularity. As before, we also
need Foster-Lyapunov type condition which we state below.
Denote by $\breve{K}(y)=1_{B^c_1}(y) K(y)$.

\begin{assumption}\label{A4.2}
The following hold.
\begin{itemize}
\item[(i)] There exist $V\in C^2(\mathbb{R}^d), V\geq 0$ and a function $0\leq h\in C(\mathbb{R}^d)$, $V$ and $h$ are inf-compact, such that
\begin{align}\label{Lyap-ellip}
\sup_{\tau \in\mathcal{T}} \left[
{\rm tr}{(a_\tau(x)D^2V)} + \breve I_\tau [V](x) + b_\tau(x)\cdot\grad V \right]
\leq k_0-h(x)
\quad x\in\rd,
\end{align}
for some $k_0>0$.
\item[(ii)] $\sup_{\tau\in \mathcal{T}}|g_{\tau}(x)|\leq h(x)$  and 
$\sup_{\tau \in\mathcal{T}}|g_{\tau}|\in\sorder(h)$.

\item[(iii)] For some $\upmu\geq 0$ we have $V^{1+\upmu}\in L^1(\breve{K})$ and
\begin{align}
\sup_{\tau \in\mathcal{T}}\,\frac{|g_{\tau}|}{(1+V)^{1+2\upmu}}&\leq C,\label{EA4.2A}
\\ 
\sup_{x, y}\frac{V(x+y)}{(1+V(x))(1+V(y))} &+ \limsup_{|x|\to\infty}\; \frac{1}{1+V(x)}\sup_{|y-x|\leq 1} V(y) \leq C.\label{EA4.2B}
\end{align}
Moreover, one of the following holds.
\begin{itemize}
\item[(a)] There exists a kernel $\mathscr{J}:\rd\setminus\{0\}\to (0, \infty)$, locally bounded in 
$\rd\setminus\{0\}$, such that
\begin{equation}\label{EA4.2C}
r^{d+2} K(ry)\leq \mathscr{J}(y) \quad \text{for all}\; r\in (0,1], \; y\in \rd\setminus\{0\},
\quad \int_{\rd} (|y|^2\wedge 1)\mathscr{J}(y)\dy<\infty, 
 \end{equation}
and
\begin{equation}\label{EA4.2D}
\sup_{\tau \in\mathcal{T}} \frac{|b_\tau|}{(1+V)^{\upmu}}\leq C,
\end{equation}
where $\upmu$ is same as above.
\item[(b)] $\sup_{\tau\in \mathcal{T}}\norm{b_\tau}_{L^\infty(\rd)}
<\infty$.
\end{itemize}
\end{itemize}
\end{assumption}
In the spirit of Theorem~\ref{T1.1}, We can complete the proof of Theorem~\ref{T1.3}

\begin{proof}[Proof of Theorem~\ref{T1.3}]
Since the proof is analogous to the proof of Theorem~\ref{T1.1},
we only provide the outline.
\medskip

\noindent\underline{$\alpha$-discounted problem:}
Applying \cite[Theorem~5.7]{Mou2019} we find a viscosity
solution $W_n$
\begin{equation*}
\cI W_n -\alpha W_n=0\quad \text{in}\; B_n,
\quad \text{and}\quad W_n=0\quad \text{in}\; B^c_n,
\end{equation*}
where $\alpha\in (0,1)$.
Now by Assumption~\ref{A4.2} and the proof of Lemma~\ref{L2.3} we get $|W_n|\leq \frac{k_0}{\alpha} + V$.
Given $R'>0$ we choose $n_0$ large enough so that 
$R'+4\geq n_0$. Then using the notations of Lemma~\ref{L2.3}
we have
\begin{equation*}
\inf_{\tau\in\mathcal{T}} 
\Bigl({\rm tr}{(a_\tau(x)D^2\tilde\psi_n)} + 
\breve{I}_\tau[\tilde\psi_n]+ b_\tau\cdot\tilde\psi_n + g_\tau + \breve{I}_\tau[(1-\chi)W_n]\Bigr) =0\quad \text{in}\; B_{R'+2},
\end{equation*}
where $\tilde\psi_n=\chi W_n$. Using Assumption~\ref{A4.1}(ii) and \eqref{EA4.2B} it can be easily checked that
$$\sup_{x\in B_{R'+1}}\sup_{\tau\in\mathcal{T}} 
|\breve{I}_\tau[(1-\chi)W_n]|<\infty.$$
Thus, by \cite[Theorem~4.2]{Mou2019},
$\{W_{n}\}_{n\geq n_0}$ is $\upkappa$-H\"{o}lder
continuous in $B_{R'}$,  uniformly in $n$. Therefore, we 
can repeat the proof of Lemma~\ref{L2.3} and find a 
subsequence $W_{n_k}\to w_\alpha\in \sorder(V)$ such that
\begin{equation}\label{ET4.1B}
\cI w_\alpha -\alpha w_\alpha=0\quad \text{in}\; \rd.
\end{equation}

\medskip

\noindent\underline{Unique solution of \eqref{ET4.1A}:}
As before, we denote by $\bar{w}_\alpha(x)=w_\alpha(x)-w_\alpha(0)$. The argument of Lemma~\ref{L3.1} goes through.
Therefore, we can repeat the proof of Theorem~\ref{T3.2}
to obtain that, along some subsequence $\alpha_n\to 0$, we
have $\bar{w}_{\alpha_n}\to u$, $\alpha_n w_{\alpha_n}(0)\to
\lambda^*$ and (passing limit in \eqref{ET4.1B})
\begin{equation}\label{ET4.1C}
\cI u -\lambda^*=0\quad \text{in}\; \rd, \quad u(0)=0.
\end{equation}
Proof of Theorem~\ref{T3.2} also gives us $u\in \sorder(V)$.
Next we argue that $u\in C^{2+}_{\rm loc}(\rd)$. To attain
this goal we need an estimate analogous to \eqref{ET2.4G}.
We use the notations of Lemma~\ref{L2.6}. Fix $x_0\in\rd$ and
let $r=[1+V(x_0)]^{-\upmu},\, v(x)=u(x_0 + rx)$ and
$\psi=\xi v$. Also, define
$$K^r_\tau= r^{d+2} K(x_0 + r x, ry),
\quad \text{and}\quad 
\breve{I}^r_\tau \xi(x)=\int_{\rd} (\xi(x+y)-\xi(x)-\grad\xi(x)\cdot y 1_{B_{\frac{1}{r}}}(y)) K^r_\tau (x, y)\dy.$$
It is easy to check from \eqref{ET4.1C} that
\begin{equation}\label{ET4.1D}
\inf_{\tau\in\mathcal{T}}\left[
{\rm tr}{(a_\tau(x_0+rx)D^2\psi)} + \breve{I}^r_\tau [\psi] + r b_\tau(x_0+rx)\cdot\grad \psi + r^2 g_\tau(x_0+rx)
+ \breve{I}^r_\tau[(1-\xi)v] \right]-\lambda^*=0
\quad \text{in}\; \rd.
\end{equation}
Recall that $|v(x)|\leq C(1+V(x_0+rx))$. Therefore, for
$x\in B_1$,
\begin{align*}
\sup_{\tau\in\mathcal{T}}|\breve{I}^r_\tau[(1-\xi)v]|
&\leq C r^{d+2}\int_{|y|\geq 1/2} (1 + V(x_0+rx+ry)) K(ry)\dy
\\
&= C r^2 \int_{|y|\geq \frac{1}{2r}} (1 + V(x_0+rx+y)) K(y)\dy
\\
&\leq C r^2 (1+V(x_0))\int_{\frac{1}{2r}\leq |y|\leq 1} K(y)\dy + C (1+V(x_0)) \int_{\rd} (1+V(y))\breve{K}(y)\dy
\\
&\leq C(1+V(x_0))\left[1 + \int_{|y|\leq 1} |y|^2 K(y) \dy
+ \norm{V}_{L^1(\breve{K})}
\right]
\end{align*}
for some constant $C$ independent of $x_0, r$, where
in the third line we use \eqref{EA4.2B}. Under Assumption~\ref{A4.2}(iii)(a), we can apply \cite[Theorem~4.2]{Mou2019} on \eqref{ET4.1D} to 
obtain $\eta\in (0,1)$ satisfying
\begin{equation}\label{ET4.1E}
|u(x_0)-u(x_0 + rx)|\leq C(1+V(x_0)) |x|^\eta\quad
\text{for all}\; |x|\leq \frac{1}{2},
\end{equation}
and for all $x_0\in \rd$.
For Assumption~\ref{A4.2}(iii)(b), we apply \cite[Lemma~2.1]{MZ21}, to obtain \eqref{ET4.1E}. Now consider $|x|\leq 1$. If $|x|\leq r$, then \eqref{ET4.1E} gives
$$|u(x_0)-u(x_0+x)|\leq C(1+V(x_0)) |x|^\eta.$$ 
Now suppose, $|x|> r$ and let $e=\frac{x}{|x|}$, 
$k=\lfloor \frac{2|x|}{r}+\frac{1}{2}\rfloor$. Note that
$$k\leq \frac{2|x|}{r}+\frac{1}{2}\leq k+1\; \Rightarrow 
\bigl|\frac{2|x|}{r}-k\bigr|
\leq \frac{1}{2}.$$
Using \eqref{EA4.2B} and \eqref{ET4.1E} we have
\begin{align*}
|u(x_0)-u(x_0+ x)|&\leq \sum_{i=1}^k|u(x_0 + \frac{r}{2}(i-1)e)
-u(x_0 + \frac{r}{2}ie) + |u(x_0 + \frac{r}{2}ke)-u(x_0+ x)|
\\
&\leq C(1+V(x_0)) \frac{k+1}{2^\eta}
\leq C_1(1+V(x_0)) \frac{|x|}{r}\leq C_1(1+V(x_0))^{1+\upmu}|x|^\eta.
\end{align*}
Combining the above estimate we get
$$|u(x_0)-u(x_0+ x)|\leq C(1+V(x_0))^{1+\upmu}|x|^\eta
\quad \text{for all}\; |x|\leq 1, \; \text{and}\; x_0\in\rd.
$$
Now using Assumption~\ref{A4.1}(iii), \cite[Theorem~5.3]{MZ21} and the proof of Lemma~\ref{L2.6} we see that 
$u\in C^{2+}_{\rm loc}(\rd)$. Hence $u$ is a classical solution to \eqref{ET4.1C}.

To complete the proof we only need to show uniqueness. Suppose that $(\hat{u}, \hat{\varrho})\in \sorder(V)\times\mathbb{R}$ is a solution to \eqref{ET4.1C}. From
the above argument we have $\hat{u}\in C^{2+}_{\rm loc}(\rd)$. Therefore, the arguments in Theorem~\ref{T1.1} gives that
$\hat\rho=\lambda^*$. The equality of $u=\hat{u}$ can be established using a similar argument, provided we could show 
that for $v=u-\hat{u}$, $\sup_{\rd} v= v(\bar{z})$ for
some $\bar{z}\in\rd$, implies $v=0$. We note from \eqref{ET4.1A} that
$$
\inf_{\tau\in\mathcal{T}}\left[
{\rm tr}{(a_\tau(x)D^2(-v))} + \breve I_\tau [-v](x) + b_\tau(x)\cdot\grad (-v)\right]\leq \cI \hat{u}-\cI u=0.$$
Letting $\hat{v}=v(\bar{z})-v$, we obtain from above that
$$\inf_{\tau\in\mathcal{T}}\left[
{\rm tr}{(a_\tau(x)D^2\hat{v})} + \breve I_\tau [\hat{v}](x) + b_\tau(x)\cdot\grad\hat{v}\right]\leq 0 \quad \text{in}\; \rd.$$
Thus $\hat{v}$ a non-negative super-solution. Fix any bounded domain $\Omega$. Choose $M$ large enough, so that $\hat{v}<M$
in $\Omega$. Defining $v_{M}=\hat{v}\wedge M$ we get from above that
$$\inf_{\tau\in\mathcal{T}}\left[
{\rm tr}{(a_\tau(x)D^2 v_M)} + \breve I_\tau [v_M](x) + b_\tau(x)\cdot\grad v_M\right]\leq 0 \quad \text{in}\; \Omega,$$
and $v_M(\bar{z})=0$. From the weak Harnack principle 
\cite[Theorem~3.12]{Mou2019}, it then follows that
$v_M=0$ in $\Omega$. Since $\Omega, M$ are arbitrary, we have
$v=0$ in $\rd$. This completes the proof.
\end{proof}

\section{\texorpdfstring{$C^{1, \gamma}$}{c} regularity}\label{S-regu}
The main goal of this section is to establish $C^{1, \gamma}$
regularity of the viscosity solutions to the equation
 \begin{equation}\label{nonlinear_rough kernel}
\sA u(x):= \inf_{\tau \in\mathcal{T}} \sup_{\iota\in\mathfrak{I}}
\bigg[I_{\tau \iota}[u](x)+b_{\tau \iota}(x) \cdot \nabla u(x)+g_{\tau \iota}(x) \bigg]=0,
\end{equation}
where $\mathcal{T}, \mathfrak{I}$ are some index sets and
\begin{align*}
I_{\tau \iota}[u](x)=\int_{\mathbb{R}^d}\delta (u,x,y)\frac{k_{\tau \iota}(x,y)}{|y|^{d+2s}} \,dy.
\end{align*}
Note that \eqref{nonlinear_rough kernel} is more general than
the one used in Theorem~\ref{T-reg}.
For each $\tau \in \mathcal{T}, \iota \in\mathfrak{I}$, $k_{\tau \iota}$ is symmetric in $y$, that is, $k_{\tau \iota}(x, y)=k_{\tau \iota}(x, -y)$ and
$$(2-2s)\lambda \leq k_{\tau \iota}(x, y)\leq (2-2s)\Lambda
\quad \text{for all}\; x, y, \quad 0<\lambda\leq \Lambda.$$
By $\cL_0(s)$ we denote the class of all kernels $k$ satisfying the above relation.
  
Let us define the  Pucci extremal operators.
The maximal operators, with respect to the
class $\cL_0(s)$, are defined as follows.
 \begin{align*}
 M^+u(x)&=\sup_{I\in \mathcal{L}_0(s)} I u(x)
 = (2-2s)\int_{\rd} \frac{\Lambda \delta^+(u, x, y)- \lambda \delta^-(u, x, y)}{|y|^{d+2s}},
 \\
  M^-u(x)&=\inf_{I\in \mathcal{L}_0(s)} Iu(x)
  =(2-2s)\int_{\rd} \frac{\lambda \delta^+(u, x, y)- \Lambda \delta^-(u, x, y)}{|y|^{d+2s}}.
 \end{align*}
 
We impose the following assumptions on the coefficients.
  \begin{itemize}
  \item[(\hypertarget{H1}{H1})] $b_{\tau \iota}$, $g_{\tau \iota}$ are continuous and
  $$\sup_{\tau\in \mathcal{T},\iota \in\mathfrak{I}}\norm{b_{\tau \iota}}_{L^\infty}<\infty, \quad \text{and}\quad \sup_{\tau\in \mathcal{T},\iota \in \mathfrak{I}}\norm{g_{\tau \iota}}_{L^\infty}<\infty.$$
  \item[(\hypertarget{H2}{H2})] The map $x\rightarrow k_{\tau \iota}(x,y)$ is uniformly continuous, uniformly in $\tau, \iota$ and $y$, that is,
  \begin{align*}
  |k_{\tau \iota}(x_1,y)-k_{\tau \iota}(x_2,y)|\leq \varrho(|x_1-x_2|)\quad \forall x_1,x_2 \in \mathbb{R}^d,\; \tau\in \mathcal{T}, \iota \in\mathfrak{I}, y\in\rd,
  \end{align*}
  where $\varrho$ is the modulus of continuity. 
  \end{itemize}
As before, we denote by $\omega_s(y)$ the weight function $[1+|y|^{d+2s}]^{-1}$.
Let us define the following collection of test functions
which will be used to define a norm on $I$ : Let $M>0$, $\Omega$ be a fixed domain and $x \in \Omega$. Let us consider the following set of functions: 
\begin{align*}
\mathcal{D}^x_M:=\{\phi\in L^1(\mathbb{R}^d,\omega) &|\; \phi \in C^2(x),  x\in \Omega,\hspace{1mm} 
\norm{\phi}_{L^1(\omega)}\leq M, \quad \text{and}
\\
&\quad |\phi(y)-\phi(x)-(y-x)\cdot \nabla\phi(x)|\leq M|y-x|^2, \forall y \in B_1(x)\},
\end{align*}
where $\omega$ is a weight function and $\phi\in C^2(x)$ means that there is a quadratic  polynomial $q$ such that $\phi(y)=q(y) + \sorder(|x-y|^2)$
for $y$ sufficiently close to $x$.

Let $\Omega =  B_N$ be a ball of fixed radius $N$, now observe that if $\phi \in \mathcal{ D}^x_M$ and $ \omega_{s}(y)=(1+|y|^{d+2s})^{-1}$ is a particular weight function, then
\begin{align}
\label{bound_on_phi}
&\int_{B_1(x)}|\phi(y)-\phi(x)-(y-x)\cdot \nabla\phi(x)|\,dy\leq M\int_{B_1(x)}|x-y|^2\,dy\leq M|B_1| \notag
\\
\implies & \bigg|\int_{B_1(x)}\phi(y)\,dy-\int_{B_1(x)}\phi(x)\,dy-\int_{B_1(x)}(y-x)\cdot \nabla\phi(x)\,dy\bigg|\leq M|B_1| \notag
\\
\implies & \bigg|\int_{B_1(x)}\phi(y)\,dy-\phi(x)|B_1|\bigg|\leq M|B_1|\implies |\phi(x)||B_1| \leq  M|B_1|+\int_{B_1(x)}\frac{|\phi(y)|(1+|y|^{d+2s})}{1+|y|^{d+2s}}\,dy \notag
\\
\implies &|\phi(x)||B_1| \leq  M|B_1|+M(1+(1+N)^{d+2s})\implies |\phi(x)|\leq M C_{d, s, \Omega}.
\end{align}
It is noteworthy that the right hand side of the last inequality does not depend on any particular choice of  $\phi$ or $x$. In other words, we get the same bound as long as $x\in \Omega$ and $\phi \in \mathcal{D}^x_M$ for a fixed $M>0$. 
\begin{defi}
Let  $\Omega$ be a domain and $\omega $ be a weight function , then for any non local operator $I$, the norm $||I||_\omega$ with respect to the weight function $\omega$ is defined as follows.
\begin{align}\label{norm}
||I||_\omega=\sup_{x,M}\Big\{\frac{I[\phi](x)}{1+M}\; \big|\; \phi \in \mathcal{D}^x_M ,\; x \in \Omega, M>0\Big\}.
\end{align}
\end{defi}

Our main result of this section is the following $C^{1, \gamma}$ regularity estimate
of the solutions to \eqref{nonlinear_rough kernel}.
 \begin{thm}\label{c_1,gamma_regularity}
Let $s\in (1/2, 1)$. Also, 
\hyperlink{H1}{(H1)}-\hyperlink{H2}{(H2)} hold in $B_1$ and 
$$\sup_{\tau, \iota}\norm{b_{\tau \iota}}_{L^\infty(B_1)}\leq C_0.$$
Then there exists a $\gamma\in(0, 2s-1)$ such that for
any bounded viscosity solution $u\in C(\mathbb{R}^d)$  to \eqref{nonlinear_rough kernel} in $B_1$ we have
 \begin{align*}
\norm{u}_{C^{1,\gamma}(B_{\frac{1}{2}})}\leq C\left(\norm{u}_{L^\infty(\mathbb{R}^d)}+\sup_{\tau, \iota}\norm{g_{\tau \iota}}_{L^\infty(B_1)}\right),
 \end{align*}
 where the constant $C$ depends on $d,s,C_0,\varrho,\lambda, \Lambda$.
 \end{thm}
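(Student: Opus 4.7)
The plan is to adapt the compactness-plus-Liouville strategy used by Serra in \cite{Serra2015, Serra_Parabolic}. Hypothesis \hyperlink{H2}{(H2)} only gives continuity (not H\"older continuity) of $k_{\tau\iota}$ in $x$, so Cordes-Nirenberg type perturbation methods are not directly available; the scaling argument is robust enough to avoid this. As a preparation I would first record the interior H\"older estimate $u\in C^\alpha(B_{3/4})$ from \cite[Theorem~7.2]{Schwab_Silvestre}, with constants depending only on $d,s,\lambda,\Lambda,C_0$ and $\sup\|g_{\tau\iota}\|_\infty$. This regularity will be essential both for controlling tails under blow-up and for extracting a convergent subsequence of rescaled solutions.

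The core argument is by contradiction. Suppose the estimate fails: then there exist sequences of coefficients $(b^k_{\tau\iota},k^k_{\tau\iota},g^k_{\tau\iota})$ satisfying \hyperlink{H1}{(H1)}-\hyperlink{H2}{(H2)} with the same constants, and solutions $u_k$ with $\|u_k\|_{L^\infty(\R^d)}+\sup\|g^k_{\tau\iota}\|_{L^\infty(B_1)}\le 1$, yet $[u_k]_{C^{1,\gamma}(B_{1/2})}\to\infty$. Following the normalization of \cite{Serra2015}, I would select points $x_k\in B_{1/2}$, scales $r_k\to 0$, affine polynomials $p_k$ and numbers $M_k\to\infty$ such that
\begin{equation*}
v_k(x):=\frac{u_k(x_k+r_k x)-p_k(r_k x)}{M_k\,r_k^{1+\gamma}}
\end{equation*}
is normalized (say $\|v_k\|_{L^\infty(B_1)}=1$) and enjoys controlled polynomial growth $|v_k(x)|\le C(1+|x|^{1+\gamma})$ for $x\in B_{1/r_k}$. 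A routine change of variables shows that $v_k$ solves
\begin{equation*}
\inf_{\tau}\sup_{\iota}\Bigl[\tilde I^k_{\tau\iota}[v_k](x)+r_k^{2s-1}\,b^k_{\tau\iota}(x_k+r_k x)\cdot\nabla v_k+\frac{r_k^{2s-1-\gamma}}{M_k}\,g^k_{\tau\iota}(x_k+r_k x)\Bigr]=0
\end{equation*}
on $B_{1/(2r_k)}$, where $\tilde I^k_{\tau\iota}$ has kernel $k^k_{\tau\iota}(x_k+r_k x, r_k y)$ and still belongs to $\cL_0(s)$. Because $s>\tfrac12$ and $\gamma<2s-1$, both prefactors $r_k^{2s-1}$ and $r_k^{2s-1-\gamma}/M_k$ tend to zero.

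Next I would pass to the limit. Thanks to \hyperlink{H2}{(H2)} and $r_k\to 0$, the translated kernels $k^k_{\tau\iota}(x_k+r_k x,r_k y)$ lose their $x$-dependence in the limit: along a subsequence they converge uniformly on compacts of $\R^d\setminus\{0\}$ to translation-invariant kernels $k^\infty_{\tau\iota}(y)\in\cL_0(s)$. The preliminary $C^\alpha$ bound combined with the enforced polynomial growth gives uniform control of $\|v_k\|_{L^1(\omega_s)}$ and hence of the tail contributions to $\tilde I^k_{\tau\iota}[v_k]$. Applying \cite[Theorem~7.2]{Schwab_Silvestre} to the equation for $v_k$ yields local H\"older estimates independent of $k$, so Arzel\`a--Ascoli plus diagonalization produces $v_k\to v_\infty$ locally uniformly, with $v_\infty\in C(\R^d)\cap L^1(\omega_s)$ still satisfying $|v_\infty(x)|\le C(1+|x|^{1+\gamma})$. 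Passing the limit via the stability of viscosity solutions (as in \cite[Lemma~5]{CS11b}, reproduced in Lemma~\ref{L2.3}), one obtains
\begin{equation*}
\inf_{\tau\in\mathcal{T}}\sup_{\iota\in\mathfrak{I}}I^\infty_{\tau\iota}[v_\infty]=0\quad\text{in }\R^d.
\end{equation*}

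The final step is a Liouville theorem for translation-invariant Isaacs operators in $\cL_0(s)$: any viscosity solution of this equation in all of $\R^d$ satisfying $|v(x)|\le C(1+|x|^{1+\gamma})$ with $1+\gamma<2s$ must be an affine function. This contradicts the normalization of $v_\infty$ (the linear part was subtracted off in the definition of $v_k$, and $\|v_\infty\|_{L^\infty(B_1)}=1$ forces $v_\infty\not\equiv 0$), completing the argument. Establishing this Liouville property is the main obstacle. In the purely concave (inf-only) case it follows at once from Serra's $C^{2s+}$ estimates \cite[Theorem~1.3]{Serra2015} applied at every scale; for the genuine Isaacs case I would argue via incremental quotients: for any direction $e$ and any $\gamma'\in(\gamma,2s-1)$ the quotients $w_h(x)=[v_\infty(x+he)-v_\infty(x)]/|h|^{\gamma'}$ satisfy the linear extremal inequalities $M^+ w_h\ge 0\ge M^- w_h$ and inherit growth no faster than $|x|^{1+\gamma-\gamma'}$, so an iteration of the H\"older estimate of \cite{Schwab_Silvestre} applied at all scales forces $w_h\equiv\mathrm{const}$, i.e.\ $v_\infty$ is affine, as desired.
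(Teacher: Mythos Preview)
Your approach coincides with the paper's: blow up via Serra's normalization, pass to a translation-invariant limit equation using \hyperlink{H2}{(H2)} and stability of viscosity solutions, then invoke a Liouville theorem to reach a contradiction. Two remarks are in order.

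First, the paper carries out the blow-up through the least-squares affine approximation and the monotone quantity $\Theta(r)=\sup_{k}\sup_{r'\ge r}\sup_{z\in B_{1/2}}(r')^{-\beta}\|u_k-l_{k,r',z}\|_{L^\infty(B_{r'}(z))}$, which yields the growth bound $\|v_m\|_{L^\infty(B_R)}\le CR^{1+\gamma}$ for \emph{all} $R\ge 1$, not only $R<1/r_k$. For a nonlocal equation this global growth is essential to control the tails uniformly; your restriction to $B_{1/r_k}$ would have to be removed using $\|u_k\|_{L^\infty(\R^d)}\le 1$ together with a bound on the slope of $p_k$, and that is precisely what the $\Theta$ machinery delivers. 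The preliminary interior $C^\alpha$ estimate on $u$ that you record at the outset plays no role in the paper; the H\"older estimate of \cite{Schwab_Silvestre} is applied directly to the rescaled functions $v_m$.

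Second, on the Liouville step you take an unnecessary detour. The result you need, \cite[Theorem~3.1]{Serra_Parabolic}, already covers any translation-invariant operator elliptic with respect to $\cL_0(s)$, hence Isaacs operators, and the paper simply cites it. Your incremental-quotient sketch also has a gap: the quotients $w_h=(v_\infty(\cdot+he)-v_\infty)/|h|^{\gamma'}$ do not a priori inherit growth $|x|^{1+\gamma-\gamma'}$; that would require a global estimate $[v_\infty]_{C^{\gamma'}(B_R)}\le CR^{1+\gamma-\gamma'}$, which is exactly the type of scaled regularity you are trying to establish. The bootstrap can be made to work, but completing it amounts to re-proving Serra's Liouville theorem rather than bypassing it.
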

 We mainly follow  the ideas in \cite{Serra2015} and \cite{Serra_Parabolic} for the proof of Theorem~\ref{c_1,gamma_regularity}.
 \begin{lem} \label{Rescaled_Operator}
 Let $s>\frac{1}{2}$ and $I$ be an integro-differential operator, elliptic with respect to the  class $\mathcal{L}_0(s)$, in particular an operator of the form $I_{\tau\iota}$ in \eqref{nonlinear_rough kernel} . Given $x_0\in \mathbb{R}^d, r>0, c>0$, and $l(x)=a\cdot x+b$, define $\tilde{I}$ by 
 \begin{align*}
 \tilde{I}\bigg( \frac{w(x_0+r\cdot)-l(x_0+r\cdot)}{c}\bigg)(x)=\frac{r^{2s}}{c}I(cw)(x_0+rx)
 \end{align*}
 Then $\tilde{I}$ is elliptic with respect to $\mathcal{L}_0(s)$ with the same ellipticity constants.
 \end{lem}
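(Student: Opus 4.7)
My plan is to verify directly that $\tilde I$ is elliptic with respect to $\mathcal L_0(s)$ by establishing, for any admissible pair $\tilde u_1, \tilde u_2$, the two-sided Pucci bound
\[
M^-(\tilde u_1-\tilde u_2)(x) \leq \tilde I\tilde u_1(x) - \tilde I\tilde u_2(x) \leq M^+(\tilde u_1-\tilde u_2)(x),
\]
with the same constants $\lambda,\Lambda$. The very first thing I would check is that $\tilde I$ is unambiguously defined: given $\tilde u$, the relation in the statement forces $w(y) = c\,\tilde u((y-x_0)/r) + l(y)$, so the value $\tilde I\tilde u(x)$ is prescribed by the formula in the lemma.

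I would then form $w_i(y) = c\,\tilde u_i((y-x_0)/r) + l(y)$ and observe that the affine piece cancels in the difference, since any symmetric second-order increment annihilates affine functions; consequently $w_1-w_2 = c(\tilde u_1-\tilde u_2)((\cdot-x_0)/r)$. Applying the ellipticity of $I$ at the point $x_0+rx$ yields
\[
M^-(w_1-w_2)(x_0+rx) \leq I(w_1)(x_0+rx) - I(w_2)(x_0+rx) \leq M^+(w_1-w_2)(x_0+rx).
\]
Multiplying through by $r^{2s}/c$ and invoking the defining relation for $\tilde I$ reduces the entire claim to the single scaling identity
\[
\tfrac{r^{2s}}{c}\,M^{\pm}(w_1-w_2)(x_0+rx) = M^{\pm}(\tilde u_1-\tilde u_2)(x).
\]

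This scaling identity is the computational heart of the proof and essentially its only nontrivial step. Setting $v = \tilde u_1-\tilde u_2$, a one-line computation yields $\delta(c\,v((\cdot-x_0)/r), x_0+rx, y) = c\,\delta(v, x, y/r)$, and the same factorization passes to the positive and negative parts $\delta^{\pm}$. Performing the change of variables $y = rz$ in the defining integral for $M^{\pm}$, the Jacobian $r^d$ and the factor $|rz|^{d+2s} = r^{d+2s}|z|^{d+2s}$ combine with the multiplicative constant $c$ to produce exactly $c/r^{2s}$, giving $M^{\pm}(w_1-w_2)(x_0+rx) = (c/r^{2s})\,M^{\pm}(v)(x)$. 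Substituting back, the factor $r^{2s}/c$ cancels cleanly, and the Pucci sandwich holds for $\tilde I\tilde u_1 - \tilde I\tilde u_2$ with the same $(\lambda,\Lambda)$. There is no genuine obstacle here; the argument is really a bookkeeping check, resting only on the affine invariance of $\delta$ and the homogeneity of $M^{\pm}$ (degree one under scalar multiplication of the argument and degree $-2s$ under spatial dilation).
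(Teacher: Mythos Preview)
Your proposal is correct and follows essentially the same route as the paper: invert the defining relation to express $\tilde I$ in terms of $I$, apply the ellipticity of $I$ at $x_0+rx$, and reduce to the scaling/translation behaviour of $M^{\pm}$. The paper compresses the last step into a one-line appeal to the invariance of the extremal operators, whereas you spell out the change of variables $y=rz$ and the cancellation of the affine part via $\delta(l,\cdot,\cdot)=0$; but the structure and the underlying ingredients are identical.
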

 \begin{proof}
 To see this, let us introduce the notation $\tau_r(x)=x_0+r\cdot x$. Now from the  definition of  $\tilde{I}$ we observe that
 \begin{align*}
 &\tilde{I}((w-l)\circ \tau_r)(x)=\frac{r^{2s}}{c}I(cw)(\tau_r(x))
 \\
 \implies &\tilde{I}(w)(x)=\tilde{I}(w\circ \tau^{-1}_r\circ \tau_r)(x)=\frac{r^{2s}}{c}I(c(w+l)\circ \tau_r^{-1})(\tau_r(x))
 \end{align*}
 Now it is easy to see that 
 \begin{align*}
 \tilde{I}(u)(x)-\tilde{I}(v)(x)\leq &\frac{r^{2s}}{c}\Big[I(c(u+l)\circ \tau_r^{-1})(\tau_r(x))-I(c(v+l)\circ \tau_r^{-1})(\tau_r(x))\Big]
 \\
 \leq &\frac{r^{2s}}{c}M^+(c(u-v)\circ \tau^{-1}_r)(\tau_r(x))=M^+((u-v))(x),
 \end{align*}
 using the fact that the extremal operators $M^+$ and 
 $M^-$ are translation invariant. Hence the proof.
 \end{proof}

Proof of the following result can be found in \cite[Lemma~4.3]{Serra2015}.
 \begin{lem}\label{Least_square_{tau}pproximation} 
 Let $s> \frac{1}{2}, \beta \in (1,2s)$ and define the for any $z\in \mathbb{R}^d$ and $r>0$ the following affine function
 \begin{align*}
 l_{r,z}(x)=a^* \cdot (x-z)+b^*,
 \end{align*}
 where
 \begin{align*}
 a^*_i= \frac{\int_{B_r(z)}u(x)(x_i-z_i)\,dx}{\int_{B_r(z)}(x_i-z_i)^2\,dx} \quad \text{and} \quad b^*(r,z)= \fint_{B_r(z)} u(x)\,dx,
 \end{align*}
 equivalently,
 \begin{align*}
 (a^*,b^*)=\Argmin_{(a,b)\in\rd\times\R} \int_{B_r(z)} (u(x)-a(x-z)+b)^2 \,dx.
 \end{align*}
 If for some constant $C_0$ we have 
 \begin{align*}
 \sup_{r>0}\sup_{z\in B_{\frac{1}{2}}} r^{-\beta} ||u-l_{r,z}||_{L^\infty(B_r(z))}\leq C_0,
 \end{align*}
 then
 \begin{align*}
 \norm{u}_{C^\beta(B_{\frac{1}{2}})}\leq C(\norm{u}_{L^\infty(\mathbb{R}^d)}+C_0),
 \end{align*}
 where $C$ depends on the exponent $\beta $.
 \end{lem}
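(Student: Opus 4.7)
The result is a Campanato/Morrey-type characterization: the hypothesis says that on every ball $B_r(z)$ (with $z\in B_{1/2}$) the function $u$ is approximated by its $L^2$-best affine fit with error of order $r^\beta$, and the conclusion is the usual pointwise $C^\beta=C^{1,\beta-1}$ estimate. My plan is the standard dyadic/telescoping proof: show the coefficients $(a^*(r,z),b^*(r,z))$ form a Cauchy family as $r\downarrow 0$, identify the limits with $\nabla u(z)$ and $u(z)$, and then convert the resulting pointwise Taylor-type estimate into a $C^{1,\beta-1}$ norm bound.

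\textbf{Step 1: comparing dyadic scales.} For fixed $z\in B_{1/2}$ and $r$ small, the difference $p_r(x):=l_{r,z}(x)-l_{r/2,z}(x)$ is affine, and on $B_{r/2}(z)\subset B_r(z)$ one has
\begin{equation*}
\|p_r\|_{L^\infty(B_{r/2}(z))}\le \|u-l_{r,z}\|_{L^\infty(B_{r/2}(z))}+\|u-l_{r/2,z}\|_{L^\infty(B_{r/2}(z))}\le C_0 r^\beta+C_0(r/2)^\beta.
\end{equation*}
Since an affine function $a\cdot(x-z)+b$ satisfies $\|a\|\lesssim \rho^{-1}\|a\cdot(x-z)+b\|_{L^\infty(B_\rho(z))}$ and $|b|\le \|a\cdot(x-z)+b\|_{L^\infty(B_\rho(z))}$, I get
\begin{equation*}
|a^*(r,z)-a^*(r/2,z)|\le C\, C_0\, r^{\beta-1},\qquad |b^*(r,z)-b^*(r/2,z)|\le C\, C_0\, r^{\beta}.
\end{equation*}
Because $\beta>1$, summing the geometric series over the dyadic scale $r_k=2^{-k}r$ shows that $a^*(r_k,z),b^*(r_k,z)$ are Cauchy, hence convergent to limits I will name $A(z)$ and $B(z)$. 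Passing $r\to 0$ in $l_{r,z}(y)=a^*(r,z)\cdot(y-z)+b^*(r,z)$ and using the hypothesis with $y=z$ identifies $B(z)=u(z)$, and the usual differentiability argument identifies $A(z)=\nabla u(z)$. The telescoping also yields the rate
\begin{equation*}
|a^*(r,z)-\nabla u(z)|\le C\, C_0\, r^{\beta-1},\qquad |b^*(r,z)-u(z)|\le C\, C_0\, r^{\beta}.
\end{equation*}

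\textbf{Step 2: pointwise $C^{1,\beta-1}$ bound at a fixed point.} For $y\in B_r(z)$ (take $r=|y-z|$) write
\begin{equation*}
u(y)-u(z)-\nabla u(z)\cdot(y-z)=\bigl(u(y)-l_{r,z}(y)\bigr)+\bigl(b^*(r,z)-u(z)\bigr)+\bigl(a^*(r,z)-\nabla u(z)\bigr)\cdot(y-z),
\end{equation*}
and combine the hypothesis with the Step 1 rates to obtain $|u(y)-u(z)-\nabla u(z)\cdot(y-z)|\le C\, C_0\, r^\beta$.

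\textbf{Step 3: H\"older continuity of $\nabla u$ and restoration of the $\|u\|_{L^\infty}$ term.} For $z_1,z_2\in B_{1/2}$, set $r=|z_1-z_2|$. On the overlap $B_r(z_1)\cap B_r(z_2)$, which contains a ball of radius comparable to $r$, the affine function $l_{r,z_1}-l_{r,z_2}$ has sup norm at most $2C_0 r^\beta$ by the triangle inequality applied through $u$. Hence its gradient part (which is exactly $a^*(r,z_1)-a^*(r,z_2)$) is bounded by $C\, C_0\, r^{\beta-1}$. Combining with Step 1 gives
\begin{equation*}
|\nabla u(z_1)-\nabla u(z_2)|\le C\, C_0\, |z_1-z_2|^{\beta-1}.
\end{equation*}
Together with the pointwise bound of Step 2, this controls the $C^\beta=C^{1,\beta-1}$ seminorm on $B_{1/2}$ by $CC_0$. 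Finally, a crude bound $|\nabla u(z)|\le |a^*(1,z)|+CC_0\le C(\|u\|_{L^\infty}+C_0)$ from the explicit least-squares formula (the denominator is $c_d$, the numerator is controlled by $\|u\|_{L^\infty}$) adds the $\|u\|_{L^\infty(\mathbb{R}^d)}$ contribution and completes the estimate.

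\textbf{Expected main obstacle.} The proof is essentially bookkeeping; the only subtle point is ensuring the dyadic Cauchy scheme is valid only because $\beta>1$ (otherwise $\sum r^{\beta-1}$ diverges), which is exactly why the hypothesis $\beta\in(1,2s)$ appears. A secondary technical care is extracting coefficient bounds from the sup-norm bound of an affine function on a ball; this requires invoking a fixed two-sided equivalence between $|a|,|b|$ and $\|a\cdot(x-z)+b\|_{L^\infty(B_r(z))}$, uniform in $r$ after rescaling.
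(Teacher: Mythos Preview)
Your proof is correct and follows the standard Campanato/dyadic-comparison approach; the only cosmetic point is that in Step~1 you should first define $A(z)$ as the limit and then derive the Taylor estimate with $A(z)$, which a posteriori shows $u$ is differentiable with $\nabla u(z)=A(z)$, rather than assuming differentiability in advance. The paper itself does not prove this lemma but simply cites \cite[Lemma~4.3]{Serra2015}, whose argument is essentially the one you have written out.
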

For our next result we need to introduce a class of
scaled operators. For $m\in \mathbb{N}$,
let $z_m\in B_{\frac{1}{2}}$ and  
\begin{equation}\label{E5.4}
\tilde{I}^m[w](x):= 
\inf_{\tau\in \mathcal{T}_m}\sup_{\iota \in\mathfrak{I}_m}\int_{\mathbb{R}^d} \delta(w,x,y) \frac{k_{\tau, \iota}(z_m + r_m x, r_m y)}{|y|^{d+2s}}\,dy,
\end{equation}
where $r_m>0$ and $k_{\tau, \iota}\in\cL_0(s)$ for all 
$\tau\in \mathcal{T}_m, \iota \in\mathfrak{I}_m$. Now we recall the weak convergence of operators from \cite{CS11b}. A sequence of
operators $I_m$ is said to be weakly convergent to $I$ 
(with respect to a weight function $\omega$), if for every
$\varepsilon>0$ small and test function $\phi$
a point $x_0\in\Omega$, where $\phi$ is a quadratic polynomial in $B_\varepsilon(x_0)$ and $\phi\in L^1(\omega)$,
we have
$$I_m[\phi](x)\to I[\phi](x)\quad \text{uniformly in}\; B_{\varepsilon/2}(x_0).$$
We need the following result on weak convergence.
 \begin{lem}\label{L5.4}
Let $z_m\to z_0$ and $r_m\to 0$ 
as $m\to\infty$. Let $\tilde{\mathcal{I}}^{m}$ be defined as above and the family $\{k_{\tau \iota}\; :\; \tau\in \mathcal{T}_m, \tau \in\SB_m\}$ satisfy \hyperlink{H2}{(H2)} with the same modulus of continuity $\varrho$.
Then, there exists a subsequence $\tilde{\mathcal{I}}^{m_k}$ converges weakly to a translation invariant operator $I_0$ with $I_0(0)=0$ where
$I_0$ is elliptic with respect to the class $\sL_0(s)$.
\end{lem}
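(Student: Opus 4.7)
The strategy is to apply the standard compactness framework for elliptic nonlocal operators (as in \cite{CS11b}) to the sequence $\tilde{\mathcal{I}}^{m}$ and then exploit the vanishing scale $r_m\to 0$ to argue translation invariance of the limit.

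First I would observe that the rescaled kernels
$$\tilde k^m_{\tau\iota}(x,y) := k_{\tau\iota}(z_m+r_m x,\, r_m y)$$
still satisfy $(2-2s)\lambda \leq \tilde k^m_{\tau\iota} \leq (2-2s)\Lambda$, uniformly in $m,x,y,\tau,\iota$. Hence each kernel $\tilde k^m_{\tau\iota}(x,\cdot)/|y|^{d+2s}$ belongs to $\mathcal{L}_0(s)$, and since inf-sup preserves ellipticity, each $\tilde{\mathcal{I}}^{m}$ is elliptic with respect to $\mathcal{L}_0(s)$, i.e.
$$M^{-}[u-v](x) \leq \tilde{\mathcal{I}}^{m}[u](x) - \tilde{\mathcal{I}}^{m}[v](x) \leq M^{+}[u-v](x).$$
Moreover $\tilde{\mathcal{I}}^{m}[0]\equiv 0$ because $\delta(0,x,y)\equiv 0$.

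Next, for any test function $\phi \in \mathcal{D}^{x_0}_{M}$ the Pucci bounds give $|\tilde{\mathcal{I}}^{m}[\phi](x)|\leq C(1+M)$ uniformly in $m$ and in $x$ near $x_0$. For $x,x' \in B_{\varepsilon/2}(x_0)$ I would split
$$\tilde{\mathcal{I}}^{m}[\phi](x)-\tilde{\mathcal{I}}^{m}[\phi](x') = \bigl(\tilde{\mathcal{I}}^{m}[\phi](x)-\tilde{\mathcal{I}}^{m}[\phi(\cdot-x+x')](x)\bigr) + \bigl(\tilde{\mathcal{I}}^{m}[\phi(\cdot-x+x')](x)-\tilde{\mathcal{I}}^{m}[\phi](x')\bigr).$$
The first bracket is controlled by $M^{\pm}[\phi-\phi(\cdot-x+x')](x)$, which admits a modulus in $|x-x'|$ from the $C^2$ behavior of $\phi$ near $x_0$ together with its $L^1(\omega)$ bound. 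The second bracket equals, up to the min-max inequality,
$$\inf_{\tau}\sup_{\iota}\int_{\mathbb{R}^d}\delta(\phi,x',y)\,\frac{\tilde k^m_{\tau\iota}(x,y)-\tilde k^m_{\tau\iota}(x',y)}{|y|^{d+2s}}\,dy,$$
which by \hyperlink{H2}{(H2)} is bounded by $\varrho(r_m|x-x'|)\leq\varrho(|x-x'|)$ times a constant depending only on $\phi$. Thus $\{\tilde{\mathcal{I}}^{m}[\phi]\}_m$ is equicontinuous on $B_{\varepsilon/2}(x_0)$ uniformly in $m$, and Arzelà-Ascoli combined with a diagonal subsequence argument over a countable dense family of test functions yields a subsequence $\tilde{\mathcal{I}}^{m_k}$ converging weakly to some operator $I_0$.

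Finally, I would verify the three properties of $I_0$. Ellipticity with respect to $\mathcal{L}_0(s)$ and $I_0[0]=0$ pass to the weak limit at once. For translation invariance, the same computation with $x'=x+h$ and the identity $\delta(\phi(\cdot+h),x,y)=\delta(\phi,x+h,y)$ gives
$$|\tilde{\mathcal{I}}^{m}[\phi(\cdot+h)](x)-\tilde{\mathcal{I}}^{m}[\phi](x+h)| \leq C(\phi)\,\varrho(r_m|h|) \longrightarrow 0 \quad\text{as }m\to\infty,$$
because $r_m\to 0$. Passing to the weak limit along $m_k$ then yields $I_0[\phi(\cdot+h)](x)=I_0[\phi](x+h)$, i.e.\ translation invariance. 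The main obstacle is the uniform-in-$m$ equicontinuity step: one has to split the kernel integration into a near-singularity region (using the quadratic behavior of $\phi$ at $x_0$) and a far region (using the $L^1(\omega)$ bound), taking care that every bound is uniform in $m,\tau,\iota$. The other steps are routine limit arguments.
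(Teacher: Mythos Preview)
Your approach is correct but differs from the paper's in a notable way. The paper introduces an auxiliary sequence of \emph{already translation-invariant} operators
\[
\hat I^{m}[\phi](x)=\inf_{\tau}\sup_{\iota}\int_{\mathbb{R}^d}\delta(\phi,x,y)\,\frac{k_{\tau\iota}(z_0,\,r_m y)}{|y|^{d+2s}}\,dy,
\]
obtained by freezing the first kernel argument at the limit point $z_0$, and then shows $\|\tilde I^{m}-\hat I^{m}\|_{\omega_s}\to 0$ via (H2) (since $|z_m+r_m x-z_0|\to 0$ uniformly on compacts). Compactness from \cite[Theorem~42]{CS11b} is then applied directly to the $\hat I^{m}$, so the limit $I_0$ inherits translation invariance automatically from the $\hat I^{m}$, and the convergence of $\tilde I^{m_k}$ follows by the triangle inequality in the operator norm. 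By contrast, you run the compactness machinery on $\tilde I^{m}$ itself (essentially reproving the Caffarelli--Silvestre compactness theorem via Arzel\`a--Ascoli and a diagonal argument) and then establish translation invariance of the limit as a separate step, using $\varrho(r_m|h|)\to 0$. Your route is more self-contained and exhibits explicitly where $r_m\to 0$ is used for translation invariance; the paper's route is shorter and cleaner because the ``frozen-coefficient'' trick reduces everything to a single norm estimate plus a black-box citation, with translation invariance coming for free. One small caution in your sketch: the phrase ``countable dense family of test functions'' is delicate, and you should either make precise how density plus uniform ellipticity extends the convergence to all admissible test functions, or simply invoke \cite[Theorem~42]{CS11b} at that point.
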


\begin{proof}
  To prove the lemma, let us first define a translation invariant operator $\hat{I}^m$ as follows
 \begin{align*}
 \hat{I}^m(z_0)[\phi](x)= \inf_{\tau\in \mathcal{T}_m}
 \sup_{\iota \in\mathfrak{I}_m}\int_{\mathbb{R}^d} \delta(\phi,x,y) \frac{k_{\tau \iota}(z_0,r_my)}{|y|^{d+2s}}\,dy .
 \end{align*}
We claim that for every bounded domain $\Omega$ we have
\begin{equation}\label{EL5.4A}
||\tilde{I}^{m}-\hat{I}^m||_{\omega_{s}}\xrightarrow{m \rightarrow \infty} 0,
\end{equation}
 where $\norm{\cdot}_{\omega_s}$ is given by \eqref{norm}.  To prove the convergence, consider a test function $\phi\in \mathcal{D}^M_x$, defined on $\Omega$, and observe from
 \hyperlink{H2}{(H2)} that
 \begin{align*}
 |\tilde{I}^{m}[\phi](x)-\hat{I}^m[\phi](x)|
 & = \sup_{(\tau, \iota)\in\mathcal{T}_m\times\mathfrak{I}_m}|\tilde{I}_{\tau \iota}^{m}[\phi](x)-\hat{I}_{\tau \iota}^m[\phi](x)|
 \\
 =&2M\varrho(|z_m+r_mx-z_0|) \int_{B_1}\frac{|y|^2}{|y|^{d+2s}}  \,dy
 + \varrho(|z_m+r_mx-z_0|) \int_{B^c_1}\frac{|\delta(\phi,x,y)|}{|y|^{d+2s}}\,dy.
 \end{align*} 
Next we carefully calculate the second integral. Using \eqref{bound_on_phi} we see that 
 \begin{align*}
 &\bigg|\int_{B^c_1}\frac{\delta(\phi,x,y)}{|y|^{d+2s}}\,dy\bigg|\leq \int_{B^c_1}\frac{|\phi(x+y)+\phi(x-y)-2\phi(x)|}{|y|^{d+2s}}\,dy
 \\
 \leq &\int_{B^c_1}\frac{1+|y|^{d+2s}}{|y|^{d+2s}}(|\phi(x+y)|+|\phi(x-y)|)\omega_{s}(y)\,dy+ C(s, \Omega)\frac{M}{2s}
 \leq MC_{s, \Omega}.
 \end{align*}
Thus, it follows from \eqref{norm} that
 \begin{align*}
\norm{\tilde{I}^{m}-\hat{I}^m}_{\omega_s}
 \leq C_{s, \Omega}\, \sup_{x\in\Omega}\varrho(z_m+r_m x-z-0),
 \end{align*}
 which gives us \eqref{EL5.4A}. Again, by \cite[Theorem~42]{CS11b}, there exists a subsequence $\hat{I}^{m_k}$ converges weakly to some $I_0$. Combining with \eqref{EL5.4A} it is easily seen that 
$\tilde{I}^{m_k}$ converges weakly to $I_0$. It is also
evident that $I_0(0)=0$ and $I_0$ is elliptic with respect to
the class $\cL_0(s)$ (cf. \cite[Lemma~4.1]{Serra_Parabolic}). 
\end{proof}

Now we can complete the proof of Theorem~\ref{c_1,gamma_regularity} with the help of
Lemmas~\ref{Rescaled_Operator},~\ref{Least_square_{tau}pproximation} and \ref{L5.4}.
 \begin{proof}[Proof of Theorem~\ref{c_1,gamma_regularity}]
 We prove the theorem by the method of contradiction.
First we fix the choice of $\gamma$. Let $\upgamma_\circ$
be the H\"{o}lder exponent obtained with respect to the Pucci operators
$M^{\pm}$ in \cite[Theorem~12.1]{CS09} (see also 
\cite[Theorem~2.1]{Serra_Parabolic}). Fix $\gamma\in (0, \min\{2s-1,\upgamma_\circ\})$.

Now  suppose that there exist $I_k,u_k, b^k_{\tau \iota}$ and $\{g^k_{\tau \iota}\}_k$ satisfying 
\begin{align*}
 & \sup_{\tau, \iota}|b^k_{\tau \iota}|\leq C_0 \quad \text{and} \quad 
||u_k||_{L^\infty(\mathbb{R}^d)} + \sup_{\tau, \iota}\norm{g^k_{\tau \iota}}_{L^\infty(B_1)} = 1,
\\
& \text{but}\quad ||u_k||_{C^\beta(B_{\frac{1}{2}})}\xrightarrow{k \rightarrow \infty} +\infty, \quad \text{where}\quad \beta=1+\gamma.
\end{align*}
In view of Lemma \ref{Least_square_{tau}pproximation}, there exist $a^*(k,r,z)$ and $b^*(k,r,z)$ such that
\begin{align}\label{E3.5}
\sup_k \sup_{r>0}\sup_{B_{\frac{1}{2}}}r^{-\beta} ||u_k -l_{k,r,z}||_{L^\infty(B_r(z))}= +\infty,
\end{align}
where
\begin{align*}
&(a^*(k,r,z),b^*(k,r,z))=\Argmin_{(a,b)\in \mathbb{R}^d\times \mathbb{R}}\int_{B_r(z)}(u_k(x)-a\cdot(x-z)+b)^2\,dx
\\
&\text{and} \quad  l_{k,r,z}(x)=a^*(k,r,z)\cdot (x-z)+ b^*(k,r,z).
\end{align*}
Now for any $r>0$, define $\Theta$ as follows: 
\begin{align}
\label{Theta}
\Theta(r):=\sup_k \sup_{r^\prime \geq r}\sup_{z\in B_{\frac{1}{2}}} (r^\prime)^{-\beta} ||u_k -l_{k,r^\prime, z}||_{L^\infty(B_{r^\prime}(z))}.
\end{align}
Since $\norm{u_k}_{L^\infty(\rd)}$ is finite, we see that $\Theta (r)< \infty$ for all $r> 0$, and therefore, \eqref{Theta} is well-defined.
Furthermore, from \eqref{E3.5} we observe that for any  $M>0$, however large, there exists $\tilde{r}>0, \tilde{k}\in \mathbb{N}$ and $\tilde{z}\in B_\frac{1}{2}$ such that
\begin{align*}
(\tilde{r})^{-\beta} ||u_{\tilde{k}}-l_{\tilde{k},\tilde{r},\tilde{z}}||_{L^\infty(B_{\tilde{r}}(\tilde{z}))}> M.
\end{align*}
Therefore, since $\Theta(r)\geq \Theta(\tilde{r})>M$ for any $0<r \leq  \tilde{r}$, we get  that  
$\Theta(r)\uparrow \infty $ as $r \downarrow 0$. As $\Theta(\frac{1}{m})\uparrow +\infty$ with $m\uparrow +\infty$, there exits $r_m \geq \frac{1}{m}, k_m \in \mathbb{N}$ and $z_m \in B_{\frac{1}{2}}$ such that
\begin{align*}
\frac{1}{2}\Theta(r_m)\leq\frac{1}{2}\Theta(\frac{1}{m})< (r_m)^{-\beta} ||u_{k_m}-l_{k_m,r_m,z_m}||_{L^\infty(B_{r_m}(z_m))}.
\end{align*}
It is easily seen that $r_m$ converges to zero. 
With this, let us define a new function $v_m$ as follows
\begin{align}
v_m(x):= \bigg(\frac{u_{k_m}-l_{k_m,r_m,z_m}}{r_m^\beta \Theta(r_m)}\bigg)(z_m +r_mx).
\end{align}
It is easy to see from the condition of minimality that 
\begin{align}\label{E3.8}
\int_{B_1} v_m \,dx=0; \quad \int_{B_1} v_m x_i \,dx=0;\quad \text{and}\quad ||v_m||_{L^\infty(B_1)}\geq 1/2.
\end{align}
We next claim that
\begin{align}\label{AB00}
||v_m||_{L^\infty(B_R)} \leq CR^\beta , \quad \forall R\geq 1.
\end{align}
To prove the above growth bound, we first observe that for any $z\in B_{\frac{1}{2}}, k\in \mathbb{N}$ and $r^\prime \geq r>0$
\begin{align*}
&||u_k-l_{k,r^\prime,z}||_{L^\infty(B_{r^\prime}(z))}\leq (r^\prime)^\beta \Theta(r^\prime)\quad \text{and}\quad ||u_k-l_{k,2r^\prime,z}||_{L^\infty(B_{2r^\prime}(z))}\leq (2r^\prime)^\beta \Theta(2r^\prime)
\\
\implies &||l_{k, 2r^\prime , z}-l_{k, r^\prime, z}||_{L^\infty(B_{r^\prime}(z))}\leq  ||u_k-l_{k,r^\prime,z}||_{L^\infty(B_{r^\prime}(z))} +||u_k-l_{k,2r^\prime,z}||_{L^\infty(B_{r^\prime}(z))}
\\
&\hspace{11em} \leq (r^\prime)^\beta \Theta(r^\prime)+(2r^\prime)^\beta \Theta(2r^\prime).
\end{align*}
Now by the monotonicity property of $\Theta$, we conclude for any $r^\prime \geq r>0$
\begin{align*}
||l_{k, 2r^\prime , z}-l_{k, r^\prime, z}||_{L^\infty(B_r(z))}
\leq  &(2r^\prime)^{\beta} \Theta(2 r^\prime)+(r^\prime)^{\beta} \Theta(r^\prime) \leq (2^\beta+1) (r^\prime)^\beta \Theta(r).
\end{align*}
This implies that
\begin{align*}
\sup_{k,z}|b^*(k,2r^\prime, z)-b(k,r^\prime,z)|\leq C \Theta(r) (r^\prime)^\beta \quad \text{and}\quad \sup_{k,z}|a^*(k,2r^\prime,z)-a^*(k,r^\prime,z)|\leq C \Theta(r)(r^\prime)^{\beta-1}
\end{align*}
Now if we take $R=2^N, N\geq 1$ and $r^\prime_j=2^{j}r\geq r, j=1,2,\ldots, N,$ in the above inequalities, then
\begin{align}
\label{R_Growth_on_{tau}^*}
\frac{|a^*(k,Rr,z)-a^*(k,r,z)|}{r^{\beta-1} \Theta(r)}\leq & \sum^N_{j=1}\frac{|a^*(k,r^\prime_j,z)-a^*(k,r^\prime_{j-1},z)|}{r^{\beta-1} \Theta(r)} \notag 
\\
=& \sum^N_{j=1}2^{(\beta-1)(j-1)}\frac{|a^*(k,r^\prime_j,z)-a^*(k,r^\prime_{j-1},z)|}{2^{(\beta-1)(j-1)}r^{\beta-1} \Theta(r)} \notag
\\
\leq & C \sum_{j=1}^N 2^{(j-1)(\beta-1)}\leq C \frac{(2^{\beta-1})^N-1}{2^{\beta-1}-1}  \leq \frac{C}{2^{\beta-1}-1}R^{\beta-1}.
\end{align}
Similarly, we can also prove that 
\begin{align}
\label{R_growth_on_b^*}
\frac{|b^*(k, Rr,z)-b^*(k,r,z)|}{r^\beta \Theta(r)}\leq CR^\beta.
\end{align}
Now, observe that
\begin{align*}
||v_m||_{L^\infty(B_R)}\leq \frac{||u_{k_m}-l_{k_m,r_m,z_m}||_{L^\infty(B_{Rr_m}(z_m))}}{r_m^\beta \Theta(r_m)}\leq & R^\beta \frac{\Theta(Rr_m)}{\Theta(r_m)}+ \frac{||l_{k_m,Rr_m,z_m}-l_{k_m,r_m,z_m}||_{L^\infty(B_{Rr_m}(z_m))}}{r_m^\beta \Theta(r_m)}.
\end{align*}
Moreover, for any $x\in B_{Rr_m}(z_m))$
\begin{align*}
|l_{k_m,Rr_m,z_m}-l_{k_m,r_m,z_m}| &\leq |a^*(k_m,Rr_m,z_m)-a^*(k_m,r_m,z_m)||x-z_m| 
\\
&\qquad  + |b^*(k_m,Rr_m,z_m)-b^*(k_m,r_m,z_m)|
\\
&\leq  Rr_m|a^*(k_m,Rr_m,z_m)-a^*(k_m,r_m,z_m)| 
\\
&\qquad + |b^*(k_m,Rr_m,z_m)-b^*(k_m,r_m,z_m)|.
\end{align*}
By using \eqref{R_Growth_on_{tau}^*} and \eqref{R_growth_on_b^*}
we have
\begin{align}
\label{R_growth_on_l}
\frac{\norm{l_{k_m,Rr_m,z_m}-l_{k_m,r_m,z_m}}_{L^\infty(B_{Rr_m}(z_m))}}{r_m^\beta \Theta(r_m)}&\leq R\frac{|a^*(k_m,Rr_m,z_m)-a^*(k_m,r_m,z_m)|}{r_m^{\beta-1}\Theta(r_m)}\notag
\\
&\qquad + \frac{|b^*(k_m,Rr_m,z_m)-b^*(k_m,r_m,z_m)|}{r_m^\beta\Theta(r_m)} \notag
\\
\leq &C(\beta)R^\beta + C(\beta)R^\beta.
\end{align}
Ultimately,  using the monotonicity property of $\Theta$ and \eqref{R_growth_on_l} , we obtain
\begin{align}\label{AB01}
||v_m||_{L^\infty(B_R)}\leq (1+2C(\beta ))R^\beta.
\end{align}
This gives us \eqref{AB00}.

Now, observe that, by Lemma \ref{Rescaled_Operator},  $v_m$ satisfies the following equation in the viscosity sense
\begin{equation}\label{AB02}
\begin{split}
M^- v_m - r_m^{2s-1}C_0|\nabla v_m|- r_m^{2s-\beta}\frac{|b^k_{\tau \iota}\cdot  a_m^*|+|g^k_{\tau \iota}|}{ \Theta(r_m)} \leq 0  \hspace{2mm} \text{in} \hspace{2mm}  B_R,
\\
M^+ v_m + r_m^{2s-1}C_0|\nabla v_m|+r_m^{2s-\beta}\frac{|b^k_{\tau \iota}\cdot  a_m^*|+|g^k_{\tau \iota}|}{ \Theta(r_m)} \geq 0 \hspace{2mm} \text{in} \hspace{2mm}  B_R,
\end{split}
\end{equation}
where $a^*_m:=a^*(k_m,r_m, z_m)$.  We first claim that 
\begin{align*}
|a^*_m|\leq C(1+\Theta(r_m)).
\end{align*}
To see this, choose $l_m \in \mathbb{N}$ such that $2^{-l_m}\leq r_m < 2^{-(l_m-1)}$ and observe
\begin{align*}
|a^*(k_m, r_m, z_m)-a^*(k_m, 2^{(l_m-1)} r_m, z_m)| &\leq \sum^{l_m}_{j=1}|a^*( k_m, 2^jr_m, z_m)-a^*(k_m, 2^{(j-1)}r_m, z_m)|
\\
&\leq  \sum^{l_m}_{j=1}(2^{(j-1)}r_m)^{\beta-1}\frac{|a^*( k_m, 2^jr_m, z_m)-a^*(k_m, 2^{(j-1)}r_m, z_m)|}{(2^{(j-1)}r_m)^{(\beta-1)}}
\\
&\leq \sum^{l_m}_{j=1}(2^{(j-1)}r_m)^{\beta-1}\Theta(2^{j-1}r_m) 
\leq \Theta(r_m)r^{(\beta-1)}_m \frac{2^{(\beta-1)l_m}-1}{2^{\beta-1}-1}
\\
&\leq \Theta(r_m) \frac{(r_m2^{l_m})^{(\beta-1)}}{2^{\beta-1}-1} \leq \frac{\Theta(r_m)2^{\beta-1}}{2^{\beta-1}-1}.
\end{align*}
Again, from the definition  and monotonicity of $\Theta$
\begin{align*}
|u_{k_m}(x)-l_{k_m, 2^{(l_m-1)} r_m, z_m}(x)|&\leq \Theta (2^{l_m-1} r_m)(2^{l_m-1} r_m)^\beta \leq 
\Theta({1}/{2})(2^{l_m-1} r_m)^\beta  \quad \forall x\in B_{2^{l_m-1}r_m}(z_m)
\\
\implies|u_{k_m}(z_m)-b^*(k_m, 2^{(l_m-1)}r_m,z_m)|&\leq  \Theta({1}/{2})\left(2^{l_m-1} r_m\right)^\beta, \hspace{2mm} [\text{by substituting }\hspace{1mm} x=z_m]
\end{align*}
which in turn, implies that  for small $r_m$
\begin{align*}
|a^*(k_m, 2^{(l_m-1)} r_m, z_m)|\leq C \Theta(\frac{1}{2}).
\end{align*}
So we have our desired result
\begin{align*}
|a^*(k_m, r_m, z_m)|\leq |a^*(k_m, r_m, z_m)-a^*(k_m, 2^{(l_m-1)} r_m, z_m)|+|a^*(k_m, 2^{(l_m-1)} r_m, z_m)|\leq C(\Theta(r_m)+1).
\end{align*}
Therefore, as $b_{\tau \iota}, g_{\tau \iota}$ are uniformly bounded, we have 
\begin{align}\label{E5.16}
r_m^{2s-\beta}\frac{|b^k_{\tau \iota}\cdot  a_m^*|+|g^k_{\tau \iota}|}{ \Theta(r_m)} \xrightarrow{m \rightarrow \infty } 0.
\end{align}
Applying \cite[Theorem~7.2]{Schwab_Silvestre} it then follows from 
\eqref{AB01}-\eqref{AB02}, that the family $\{v_m\,:\, m\geq 1\}$ is locally H\"{o}lder continuous, uniformly in $m$. By the Arzel\`{a}-Ascoli
theorem we can extract a convergent subsequence of $v_m$.
Let $v_m\to v\in C(\rd)$
along some subsequence, as $m\to\infty$. It is also evident from \eqref{AB01} that
$$\norm{v}_{L^\infty(B_R)}\leq C(1+R^\beta).$$
 We claim that there exists a translation invariant 
operator $I_0$, elliptic with respect to $\cL_0(s)$, such that
\begin{equation}\label{AB03}
I_0(v)=0\quad \text{in} \hspace{2mm} \mathbb{R}^d.
\end{equation}
Once we have established \eqref{AB03}, it then follows from \cite[Theorem~3.1]{Serra_Parabolic} that $v(x)=a\cdot x+ b$. Passing the limit in the
first two equations of \eqref{E3.8} gives $a=0, b=0$ but it contradicts the third estimate in \eqref{E3.8} that requires 
$\norm{v}_{L^\infty(B_{1})}\geq 1/2$. Hence we have a contradiction
 to \eqref{E3.5}.

Thus we remain to show \eqref{AB03}. Choosing a further subsequence, if required, we may assume that $z_m\to z_0$
and $v_m\to v$ uniformly on compacts. Recall the operator
$\tilde{I}^m$ from \eqref{E5.4}. Now observe that 
for any bounded domain $\Omega$ we have
\begin{equation}\label{AB04}
\begin{split}
\tilde{I}^m v_m - r_m^{2s-1}K_0|\nabla v_m|- r_m^{2s-\beta}\frac{|b^k_{\tau \iota}\cdot  a_m^*|+|g^k_{\tau \iota}|}{ \Theta(r_m)} \leq 0  \hspace{2mm} \text{in} \hspace{2mm}  \Omega,
\\
\tilde{I}^m + r_m^{2s-1}K_0|\nabla v_m|+r_m^{2s-\beta}\frac{|b^k_{\tau \iota}\cdot  a_m^*|+|g^k_{\tau \iota}|}{ \Theta(r_m)} \geq 0 \hspace{2mm} \text{in} \hspace{2mm}  \Omega.
\end{split}
\end{equation}
By Lemma~\ref{L5.4}, $\tilde{I}^{m_k}$ converges weakly to
a translation invariant operator $I_0$ which is elliptic
with respect to $\cL$. Thus \eqref{AB03} follows from 
\eqref{E5.16} and \eqref{AB04}.
\end{proof}

\subsection*{Acknowledgement}
We thank the reviewers for helpful comments.
This research of Anup Biswas was supported in part by a SwarnaJayanti fellowship DST/SJF/MSA-01/2019-20.
 

 \bibliographystyle{plain}
\bibliography{ref.bib}
\end{document}